\newtheorem{theorem}{Theorem}[section]
\newtheorem{proposition}[theorem]{Proposition}
\newtheorem{lemma}[theorem]{Lemma}
\newtheorem{corollary}[theorem]{Corollary}
\theoremstyle{definition}
\theoremstyle{remark}
\newtheorem{remark}[theorem]{Remark}
\numberwithin{equation}{section}
\renewcommand{\Re}{\operatorname{Re}}
\newcommand{\rg}{\operatorname{rg}}
\newcommand{\R}{\mathbb{R}}
\newcommand{\N}{\mathbb{N}}
\newcommand{\C}{\mathbb{C}}
\newcommand{\B}{\mathbb{B}}
\newcommand{\la}{\lambda}
\newcommand{\mc}[1]{\mathcal{#1}}
\title[]{Stable singularity formation for the Keller-Segel system in three dimensions}
\author{Irfan Glogi\'c}
\address{Department of Mathematics, University of Vienna, Oskar-Morgenstern-Platz 1, 1090 Vienna, Austria}
\email{irfan.glogic@univie.ac.at}
\author{Birgit Sch\"orkhuber}
\address{Leopold-Franzens Universit\"at Innsbruck, Institut f\"ur Mathematik, Technikerstraße 13, 6020 Innsbruck, Austria}
\email{Birgit.Schoerkhuber@uibk.ac.at}
\thanks{Irfan Glogi\'c is supported by the Austrian Science Fund FWF, Projects P 30076 and P 34378.}
\begin{document}
	\maketitle
\begin{abstract}
We consider the parabolic-elliptic Keller-Segel system in dimensions $d \geq 3$, which is the mass supercritical case. This system is known to exhibit rich dynamical behavior including singularity formation via self-similar solutions. An explicit example has been found more than two decades ago by Brenner et al.~\cite{BCKSV99}, and is conjectured to be nonlinearly radially stable. We prove this conjecture for $d=3$. Our approach consists of reformulating the problem in similarity variables and studying the Cauchy evolution in intersection Sobolev spaces via semigroup theory methods. To solve the underlying spectral problem, we crucially rely on a technique we recently developed in \cite{GloSch20}. To our knowledge, this provides the first result on stable self-similar blowup for the Keller-Segel system. Furthermore, the extension of our result to any higher dimension is straightforward.  We point out that our approach is general and robust, and can therefore be applied to a wide class of parabolic models. 
\end{abstract}

\section{Introduction}
\noindent We consider the following system of equations
\begin{equation}\label{Eq:KS}
	\begin{cases}
		\partial_t u(t,x)= \Delta u(t,x) + \nabla \! \cdot \! (u(t,x) \nabla v(t,x)), \\
		\Delta v(t,x) = u(t,x),
	\end{cases}
\end{equation}
equipped with an initial condition $u(0,\cdot)=u_0$, for $u, v : [0,T) \times \R^d \to \R$ and some $T>0$.
This model is frequently referred to as the parabolic-elliptic Keller-Segel system, named after the authors of \cite{Keller_Segel_1970}, who introduced a system of  coupled parabolic equations to describe chemotactic aggregation phenomena in biology. The parabolic-elliptic version \eqref{Eq:KS} was derived later by J\"ager and Luckhaus \cite{Jaeger_Luckhaus_1992}. System \eqref{Eq:KS} arises also as a simplified model for self-gravitating matter in stellar dynamics, with $u$ representing the gas density and $v$ the corresponding gravitational potential, see e.g. \cite{Wolansky1992, Ascibar_et_al_2013}. 

The equation for $v$ in \eqref{Eq:KS} can be solved explicitly in terms of $u$, which reduces the system to a
single (non-local) parabolic equation 
\begin{equation}\label{Eq:KS_single}
		\partial_t u(t,x)= \Delta u(t,x) + u(t,x)^2 + \nabla v_u(t,x) \nabla  u(t,x),
\end{equation} 
where $v_u = G \ast u$, with $G$ denoting the fundamental solution of the Laplace equation. This equation is invariant under the scaling transformation $u  \mapsto u_{\lambda}$, 
\[ u_{\lambda}(t,x) := \lambda^{-2} u(t/\lambda^2, x/\lambda), \quad \lambda >0. \] 
Furthermore, assuming sufficient decay of $u$ at infinity, the total mass 
\[ \mathcal  M(u)(t) = \int_{\mathbb R^d} u(t,x) dx, \]
is conserved. Since $\mathcal M(u_{\lambda}) = \lambda^{d-2} \mathcal M(u)(\cdot /\lambda^2)$, the model is mass critical for $d=2$ and mass supercritical for $d \geq 3$. 

It is well known that Eq.~\eqref{Eq:KS} admits finite-time blowup solutions in all space dimensions $d \geq 2$, for which in particular
\[ \lim_{t \to T^{-}}  \|u(t,\cdot) \|_{L^{\infty}(\R^d)} = \infty,\]
for some $T >0$. This is natural in view of the phenomena that the model is supposed to describe, and there is a strong interest in understanding the structure of singularities. Consequently, there is a huge body of literature addressing this question for~\eqref{Eq:KS} and variants thereof, for a review see e.g.~\cite{Horstmann1,Horstmann2}. 

Being the natural setting for biological applications, a lot of attention has centred around the  mass critical case  $d=2$. There, the $L^1$-norm of the stationary ground state solution $Q$, defined in \eqref{KS_2d_generic} below, represents the threshold for singularity formation, see e.g. \cite{BDP2006, BCM2008, GM2018, DdPDMW2019}. Particular solutions that blow up in finite time via dynamical rescaling of $Q$, 
\begin{align}\label{KS_2d_generic}
u(t,x)  \sim \frac{1}{\lambda(t)^2} Q\left (\frac{x}{\lambda(t)} \right ), \quad  Q(x) = \frac{8}{(1+|x|^2)^2}
\end{align}
with $\lambda(t) \to 0$ for $t \to T^-$, have been constructed for different blowup rates $\la$, see \cite{Collot_Ghoul_Masmoudi_Nguyen_2022_2d,  RS2014,  Velzaquez2002, HV1996}. In particular, in \cite{Collot_Ghoul_Masmoudi_Nguyen_2022_2d} it is shown that the blowup solution corresponding to 
\begin{equation}\label{Def:Blowup_rate}
	\lambda(t) = \kappa \sqrt{T-t} e^{-\sqrt{\frac{1}{2}| \log (T-t)|}}
\end{equation}
 for certain explicit constant  $\kappa > 0$, is stable outside of radial symmetry. In addition to this, Mizogouchi \cite{Mizoguchi2022} recently proved that for solutions with non-negative and radial initial data, \eqref{KS_2d_generic}-\eqref{Def:Blowup_rate} describes the universal blowup mechanism.

In comparison, the dynamics in the supercritical case $d \geq 3$ are more complex; in particular, multiple blowup profiles are known to exist. In a recent work, Collot, Ghoul, Masmoudi and Nguyen \cite{Collot_Ghoul_Masmoudi_Nguyen_preprint_2021} proved for all $d \geq 3$ the existence of a blowup solution that concentrates in a thin layer outside the origin and implodes towards the center. Other known examples of singular behavior are provided by self-similar solutions, which are proven to exist in all dimensions $d \geq 3$, see \cite{Herrero_Medina_Velazquez_1998, BCKSV99, Senba_2005}. A particular example was found  in closed form in  \cite{BCKSV99}, and is given by
	\begin{equation}\label{Eq:GroundState}
		u_T(t,x):=\frac{1}{T-t} U \left( \frac{x}{\sqrt{T-t}} \right) \quad \text{where} \quad U(x)=\frac{4(d-2)(2d+|x|^2)}{(2(d-2)+|x|^2)^2}.
	\end{equation}	 

\subsection{The main result}
To understand the role of the solution \eqref{Eq:GroundState} for generic evolutions of \eqref{Eq:KS}, the authors of \cite{BCKSV99} performed numerical experiments and conjectured as a consequence that $u_T$ is nonlinearly radially stable. In spite of a number of results on the nature of blowup, this conjecture has remained open for more than two decades now.
In the main result of this paper we prove this conjecture for $d=3$. More precisely, we show that there is an open set of radial initial initial data around $u_1(0,\cdot)=U$ for which the Cauchy evolution of \eqref{Eq:KS} forms a singularity in finite time $T > 0$ by converging to $u_T$, i.e., to the profile $U$ after self-similar rescaling. The formal statement is as follows.
\begin{theorem}\label{Thm:Main}
		Let $d = 3$. There exists $\varepsilon>0$ such that for any initial datum
	\begin{equation}\label{Eq:Pert}
		u(0,\cdot)=U +\varphi_0,
	\end{equation}
	where $\varphi_0$ is a radial Schwartz function for which
	\begin{equation*}
		\| \varphi_0  \|_{{H}^3(\R^3)} < \varepsilon,
	\end{equation*}
	there exists $T>0$ and a classical solution $u \in C^\infty([0,T) \times \R^3)$ to \eqref{Eq:KS}, which blows up at the origin as $t \rightarrow T^{-}$. Furthermore, the following profile decomposition holds
	\begin{equation*}
		u(t,x)=\frac{1}{T-t}\left[ U\left(\frac{x}{\sqrt{T-t}}\right) + \varphi \left(t, \frac{x}{\sqrt{T-t}} \right) \right],
	\end{equation*}
	where
$
	\| \varphi (t,\cdot) \|_{{H}^3(\R^3)} \rightarrow 0
$
	as $t \rightarrow T^-$.
\end{theorem}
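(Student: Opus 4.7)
The plan is to pass to self-similar coordinates, linearize around the profile $U$, and run a semigroup/fixed-point scheme in a radial Sobolev space, cancelling the single unstable direction by modulating the blow-up time $T$. Introducing $\tau := -\log(T-t)$ and $\xi := x/\sqrt{T-t}$, and writing $u(t,x) = (T-t)^{-1}\Psi(\tau,\xi)$ and $v_u(t,x) = \Phi(\tau,\xi)$ with $\Delta_\xi \Phi = \Psi$, substitution into \eqref{Eq:KS_single} yields the autonomous equation
\[
\partial_\tau \Psi = \Delta\Psi - \tfrac{1}{2}\xi\cdot\nabla\Psi - \Psi + \Psi^2 + \nabla\Phi\cdot\nabla\Psi,
\]
for which $U$ is stationary. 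The ansatz $\Psi = U+\varphi$, $\Phi = \Phi_U+\psi$, $\Delta\psi = \varphi$, produces the perturbation equation $\partial_\tau\varphi = \mathcal{L}\varphi + \mathcal{N}(\varphi)$, with
\[
\mathcal{L}\varphi := \Delta\varphi - \tfrac{1}{2}\xi\cdot\nabla\varphi - \varphi + 2U\varphi + \nabla\Phi_U\cdot\nabla\varphi + \nabla\psi\cdot\nabla U, \qquad \mathcal{N}(\varphi) := \varphi^2 + \nabla\psi\cdot\nabla\varphi.
\]
Theorem~\ref{Thm:Main} then reduces to producing, for each small radial $\varphi_0\in H^3(\R^3)$ and a suitably chosen $T$ near $1$, a global-in-$\tau$ solution $\varphi$ that decays exponentially in $H^3_{\mathrm{rad}}(\R^3)$.

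Next I would realize $\mathcal{L}$ on an intersection Sobolev space built on $H^3_{\mathrm{rad}}(\R^3)$, the intersection structure being needed to absorb the nonlocality $\varphi\mapsto\nabla\Delta^{-1}\varphi$. The free part $\Delta - \tfrac{1}{2}\xi\cdot\nabla - 1$ is of Ornstein--Uhlenbeck type and generates an analytic semigroup whose spectrum lies strictly in a left half-plane. The remaining pieces -- multiplication by $2U$, drift by $\nabla\Phi_U$, and the nonlocal term $\nabla\Delta^{-1}\varphi\cdot\nabla U$ -- form a relatively compact perturbation because $U$ and $\nabla\Phi_U$ decay polynomially, so $\mathcal{L}$ generates a strongly continuous semigroup and its spectrum in any right half-plane consists of finitely many eigenvalues of finite algebraic multiplicity. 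Differentiating the family $u_T$ in $T$ at $T=1$ and pulling back to similarity coordinates yields the explicit unstable eigenfunction $g \propto 2U + \xi\cdot\nabla U$ with eigenvalue $\lambda = 1$, encoding the time-translation freedom. The main obstacle is then to prove that $\lambda = 1$ is the \emph{only} spectral point of $\mathcal{L}$ in $\{\Re\lambda\geq 0\}$. For radial $f$, the eigenvalue equation $\mathcal{L}f = \lambda f$ coupled with $\Delta\psi_f = f$ reduces, after eliminating $\psi_f$, to a linear ODE in $r = |\xi|$ with rational coefficients determined by the closed-form profile \eqref{Eq:GroundState}. Following the strategy developed in \cite{GloSch20}, I would transform this equation to a Heun-type normal form and classify via Frobenius/connection analysis the solutions simultaneously regular at $r=0$ and decaying at $r=\infty$, verifying that no admissible solution exists for $\lambda$ with $\Re\lambda\geq 0$ and $\lambda\neq 1$; the explicit rational form of $U$ in $d=3$ is what makes this analysis tractable. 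A Gearhart--Pr\"uss-type argument then upgrades this spectral mode stability to the uniform exponential decay estimate $\|e^{\tau\mathcal{L}}(I-P)\|\lesssim e^{-\omega\tau}$ on the codimension-one subspace $\ker P$, where $P$ is the Riesz projection onto $\mathrm{span}\,g$.

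Finally, I would close the nonlinear problem via a Lyapunov--Perron modulation of $T$. Treating $T$ as a free parameter near $1$, the transformation to similarity variables sends the initial data $U+\varphi_0$ to a family $\tilde\varphi_0(T)\in H^3_{\mathrm{rad}}$ depending smoothly on $T$. For each such $T$, I solve the projected Duhamel equation
\[
\varphi(\tau) = e^{\tau\mathcal{L}}(I-P)\tilde\varphi_0(T) + \int_0^\tau e^{(\tau-s)\mathcal{L}}(I-P)\mathcal{N}(\varphi(s))\,ds
\]
by contraction in the exponentially weighted space $\{\varphi\in C([0,\infty);H^3_{\mathrm{rad}}): \sup_{\tau\geq 0}e^{\omega\tau}\|\varphi(\tau)\|_{H^3}<\infty\}$, using $H^3(\R^3)\hookrightarrow L^\infty\cap W^{1,6}$, Moser product estimates, and Calder\'on--Zygmund-type bounds on $\nabla^2\Delta^{-1}$ to control $\mathcal{N}$. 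A one-dimensional intermediate-value or implicit-function argument in $T$ then selects the unique blow-up time near $1$ that cancels the unstable component produced by the full Duhamel formula, yielding a solution decaying at rate $e^{-\omega\tau}$. Undoing the change of variables gives a classical solution of \eqref{Eq:KS} with the claimed profile decomposition and concludes the proof.
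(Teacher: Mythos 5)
Your overall architecture (similarity variables, one unstable mode from time translation, Lyapunov--Perron modulation of $T$, contraction in an exponentially weighted space) matches the paper's, but two of your steps have genuine gaps. The first and most serious is the spectral problem, which you correctly identify as the main obstacle but for which your proposed method would not work. You keep the nonlocal formulation in $\R^3$, so your linearized operator $\mathcal{L}$ contains the nonlocal term $\nabla\Delta^{-1}\varphi\cdot\nabla U$ and is not (visibly) symmetrizable; excluding eigenvalues in the whole half-plane $\{\Re\lambda\ge 0\}\setminus\{1\}$ by ``Frobenius/connection analysis'' of a Heun-type equation is not a feasible plan, since connection coefficients of such equations are not available in closed form and one cannot sweep a two-real-parameter family of complex $\lambda$ by hand. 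The paper's route is different and is what makes the problem tractable: in radial symmetry one passes to the reduced mass $\tilde w(t,r)=\tfrac{1}{2r^d}\int_0^r\tilde u(t,s)s^{d-1}\,ds$, which converts the system into a \emph{local} scalar semilinear heat equation in $\R^{5}$ with explicit profile $\phi(x)=2/(2+|x|^2)$. The resulting linearized operator is then \emph{self-adjoint} in a weighted space $L^2_\sigma$ with $\sigma=\phi^{-2}e^{-|x|^2/4}$, which confines the spectrum to the real line; the known eigenvalue $\lambda=1$ is removed by a supersymmetric factorization, and the absence of further nonnegative eigenvalues is established by the quantitative GGMT bound from \cite{GloSch20} (a computable integral criterion on the negative part of the potential), not by connection analysis. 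This also removes the need for Gearhart--Pr\"uss: exponential decay is obtained by combining the self-adjoint decay in the weighted space with a dissipative estimate for the generator in the intersection Sobolev space.

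The second gap is in your nonlinear iteration. Writing $\mathcal{N}(\varphi)=\nabla\cdot(\varphi\nabla\psi)$ with $\Delta\psi=\varphi$, the top-order contribution to $\|\mathcal{N}(\varphi)\|_{H^3}$ involves four derivatives of $\varphi$, so $\mathcal{N}$ maps $H^3$ only into $H^2$ (the same loss occurs in the paper's local formulation, where $N(f)=\Lambda f^2+6f^2$). A plain contraction in $C([0,\infty);H^3_{\mathrm{rad}})$ using Moser and Calder\'on--Zygmund bounds therefore does not close; one must exploit parabolic smoothing of the semigroup, i.e.\ an estimate of the form $\|S(\tau)\mathcal{N}(f)\|_{H^{k}}\lesssim \tau^{-1/2}e^{\tilde\omega\tau}\|f\|_{H^{k}}^2$ obtained by commuting the scaling vector field through the heat kernel and running a Duhamel/Gronwall argument, which is a substantial technical component of the paper (Section~\ref{Sec:Nonlin}). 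Finally, note that the paper's stability statement in $H^3(\R^3)$ for $u$ is recovered at the very end from the $\R^5$ analysis through an exact equivalence of homogeneous Sobolev norms between $u$ and its reduced mass, proved via a Bessel-function identity; in your direct approach this translation step is absent but also unnecessary, at the price of the difficulties above.
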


\begin{remark}
As it will be apparent from the proof, the extension of this result to any higher dimension is straightforward. This involves developing the analogous well-posedness theory and solving the underlying spectral problem for a particular choice of $d \geq 4$. We therefore restrict ourselves to the lowest dimension, and the physically most relevant case, $d=3$. 
\end{remark}

\begin{remark}
	Due to the embedding $H^3(\R^3) \hookrightarrow L^\infty(\R^3)$, 
	the conclusion of the theorem implies  that the evolution of the perturbation \eqref{Eq:Pert}, when dynamically self-similarly rescaled, converges back
	to $U$ in $L^\infty(\R^3)$. In other words,
	\begin{equation*}
		(T-t)\, u (t,\sqrt{T-t}\, \cdot) \rightarrow U
	\end{equation*}
uniformly on $\R^3$ as $t \rightarrow T^-$. 
\end{remark}

\subsection{Related results for $d \geq 3$}\label{Sec:Known_results}
There are many works that treat the system \eqref{Eq:KS} in higher dimensions. Here we give a short and noninclusive overview of some of the important developments. 

Local existence and uniqueness of radial solutions for \eqref{Eq:KS} holds in $L^{\infty}(\R^d)$ as well as in other function spaces, see e.g.~\cite{Giga_Mizoguchi_Senba_2011, Biler_Book2020}. Concerning global existence, various criteria are given in terms of critical (i.e.~scaling invariant) norms. For example, it is known that initial data of small $L^{d/2}(\R^d)$-norm lead to global (weak) solutions \cite{CPZ2004}. This result was later extended by Calvez, Corrias, and Ebde \cite{CPE2012} to all data of norm less than a certain constant coming from the Gagliardo-Nierenberg inequality. For results in terms of the critical Morey norms, see e.g.~\cite{BilKarPil19,Biler_Book2020}. 
Concerning the existence of finite time blowup, the aforementioned works \cite{CPZ2004,CPE2012} give sufficient conditions in terms of the size of the second moment of the initial data. For an earlier result of that type see the work of Nagai \cite{Nagai1995}. For other, more recent results see \cite{BilKarZie15,Ogawa_Wakui2016,SW2019,BilZie19,Biler_Book2020,Naito2021}.  We point out, however, that in contrast to the $d=2$ case, for $d \geq 3$ still no simple  characterization of threshold for blowup in terms of a critical norm is known.

Concerning the structure of singularities, not much is known. It is straightforward to conclude that blowup solutions of \eqref{Eq:KS} satisfy $\liminf_{t \to T^{-}} \,(T-t) \| u(t,\cdot) \|_{L^{\infty}(\R^d)} >0$, see e.g.~\cite{NaiSen12}. Accordingly, singular solutions are classified as \textit{type I} if  
\[ \limsup_{t \to T^{-}} \,(T-t) \| u(t,\cdot) \|_{L^{\infty}(\R^d)} < \infty \]
and \textit{type II} otherwise.   
The first formal construction of type II blowup was performed by Herrero, Medina and Vel\'{a}zquez for $d=3$ in \cite{Herrero_Medina_Velazquez_1997}; the singularity they construct consists of a smoothed-out shock wave concentrated in a ring that collapses into a Dirac mass the origin. This blowup mechanism was later observed numerically in \cite{BCKSV99} for higher dimensions as well, and is furthermore conjectured to be radially stable. A rigorous construction of this solution for all $d \geq 3$ came only recently in the work of Collot, Ghoul, Masmoudi and Nguyen \cite{Collot_Ghoul_Masmoudi_Nguyen_preprint_2021}, who also prove its radial stability.
In contrast to these results, if the initial profile of a blowup solution is radially non-increasing and of finite mass then the limiting spatial profile is very much unlike the Dirac mass, since it satisfies
\begin{equation*}
	C_1 |x|^{-2} \leq \lim_{t \rightarrow T^-} u(t,x) \leq C_2|x|^{-2}
\end{equation*}
near the origin, as proven by Souplet and Winkler \cite{SW2019}. This is in particular the case for self-similar solutions, for which $\lim_{t \rightarrow T^-} u(t,x) = C|x|^{-2}$ at the blowup time.
To add to the importance of self-similar solutions for understanding the structure of singularities, Giga, Mizoguchi and Senba   \cite{Giga_Mizoguchi_Senba_2011} showed that any radial, non-negative type I blowup solution of \eqref{Eq:KS} is asymptotically self-similar. For $3 \leq d \leq 9$, it is known that there are infinitely many similarity profiles, while for $d \geq 10$ there is at least one, see \cite{Senba_2005}. However, a full classification of the set of self-similar (blowup) solutions, even in radial symmetry, is not available so far.

Finally, we note that in the two-dimensional case, any blowup solution is necessarily of type II, see e.g.~ \cite{NaiSuz08}. In the mass subcritical case $d=1$ blowup has been excluded in \cite{ChiPer81}.

\subsection{Outline of the proof of the main result}
Since we assume radial symmetry, i.e.~$u(t,x) = \tilde u(t,|x|)$, we first reformulate \eqref{Eq:KS} in terms of the reduced mass  
\begin{equation*}
	\tilde w(t,r):=\frac{1}{2r^2} \int_{0}^{r} \tilde u(t,s)s^{2}ds.
\end{equation*}
The advantage of this change of variable lies in the fact that it reduces the system \eqref{Eq:KS} to a single local semilinear heat equation on $\mathbb R^{5}$ for $w(t,x) := \tilde w(t,|x|)$ 
\begin{equation}\label{Eq:w}
  \begin{cases}
	~	\partial_t w  -  \Delta w = \Lambda w^2 + 6 w^2,\\
	~ w(0,\cdot)=w_0,
  \end{cases}
\end{equation}
where $\Lambda f(x):=x \cdot \nabla f(x)$, and the initial datum is radial $w_0=\tilde{w}(0,|\cdot|)$. 
Additionally, the self-similar solution \eqref{Eq:GroundState} turns into
\begin{equation*}
	w_T(t,x):=\frac{1}{T-t}\phi\left( \frac{x}{\sqrt{T-t}}\right) \quad \text{where} \quad \phi(x)=\frac{2}{2+|x|^2}.
\end{equation*}
The bulk of our proof consists of showing stability of $w_T$. Then, by using the equivalence of norms of $u$ and $w$ we turn the obtained stability result into Theorem \ref{Thm:Main}.
In Section \ref{Sec:Sim_Var},
as is customary in the study of self-similar solutions, we pass to similarity variables
\[\tau = \tau(t) := \log\left(\frac{T}{T-t}\right), \quad \xi = \xi(t,x) :=\frac{x}{\sqrt{T-t}}. \]
We remark that the application of similarity variables in the study of blowup for nonlinear parabolic equations goes back to 1980's and the early works of Giga and Kohn \cite{GigKoh85,GigKoh87,GigKoh89}.
Note that the strip $[0,T)\times \R^5$ is mapped via $(t,x) \mapsto (\tau,\xi)$ into the half-space $[0,\infty) \times \R^5$. Furthermore, by rescaling the dependent variable $(T-t)w(t,x)=:\Psi(\tau,\xi)$, the solution $w_T$ becomes $\tau$-independent, $\xi \mapsto \phi(\xi)$.  
Consequently, the problem of stability of finite time blowup via $w_T$ is turned into the problem of the asymptotic stability of the static solution $\phi$. To study evolutions near $\phi$, we consider the pertubation ansatz $\Psi(\tau,\cdot) = \phi + \psi(\tau)$, which yields the central evolution equation of the paper
\begin{equation}\label{Eq:Pert_intro}
	\begin{cases}
		~	\partial_\tau \psi(\tau) =  L \psi(\tau) +  N\big(\psi(\tau)\big),\\
		~ \psi(0) =  T w_0(\sqrt{T}\cdot)-\phi.
	\end{cases}
\end{equation}
Here, the linear operator $L$ and the remaining nonlinearity $N$ are explicitly given as
\begin{equation}\label{Eq:L,N_intro}
	L  = \Delta  - \frac12 \Lambda  - 1	 + 2\Lambda(\phi \, \cdot)+ 12 \phi 
	\quad \text{and} \quad  N(f)= \Lambda f^2 +  6 f^2.
\end{equation} 
The next step is to establish a well-posedness theory for the Cauchy problem \eqref{Eq:Pert_intro}. To that end, in Section \ref{Sec:Funct} we introduce the principal function space of the paper
\begin{equation*}
	X^k := \dot H^1_{\text{rad}}(\mathbb R^5) \cap \dot H^k_{\text{rad}}(\mathbb R^5), \quad k \geq 3.
\end{equation*}

 To construct solutions to \eqref{Eq:Pert_intro} in $X^k$, we take up the abstract semigroup approach. 
  In Section \ref{Sec:Evol_Lin} we concentrate on the linear version of \eqref{Eq:Pert_intro}. First, we prove that $L$, being initially defined on test functions, is closable, and its closure $\mc L$ generates a strongly continuous semigroup $S(\tau)$ on $X^k$. Our proof is based on the Lumer-Phillips theorem, and it involves a delicate construction of global, radial and decaying solutions to the Poisson type equation $\mc {L}f=g$. Thereby we establish existence of the linear flow near $\phi$. This flow exhibits growth in general, due to the existence of the unstable eigenvalue $\la=1$ of the generator $\mc L$. This instability is not a genuine one though, as it arises naturally, due to the time translation invariance of the problem. By combining the analysis of the linear evolution in $X^k$  with a thorough spectral analysis of $\mc L$ in a suitably weighted $L^2$-space, we prove the existence of a (non-orthogonal)  rank-one projection $\mc P: X^k \rightarrow X^k$ relative to $\la=1$, such that the linear evolution in $X^k$ decays exponentially on $\ker \mc P$. This is expressed formally in the central  result of the linear theory, Theorem \ref{Thm:DecayX}.  
  
  The existence of $S(\tau)$ allows us to express the nonlinear equation \eqref{Eq:Pert_intro} in the integral from
  \begin{equation}\label{Eq:Int_intro}
  	\psi(\tau) = S(\tau)\psi(0) + \int_{0}^{\tau}S(\tau-s)N(\psi(s))ds.
  \end{equation}
  As is customary, we employ a fixed point argument to construct solutions to \eqref{Eq:Int_intro} in $X^k$. For this, we need a suitable Lipschitz continuity property of the nonlinear operator $N$ in $X^k$. However, the  space $X^k$ is not invariant under the action of $N$, due to the presence of the derivative nonlinearity, recall \eqref{Eq:L,N_intro}. Nevertheless, by exploiting the smoothing properties of $S(\tau)$, we show that the operator $f \mapsto S(\tau)N(f)$ is locally Lipschitz continuous in $X^k$, which will suffice for setting up a contraction scheme for \eqref{Eq:Int_intro}. The proofs of the smoothing properties of $S(\tau)$ and those of the accompanying nonlinear Lipschitz estimates comprise the main content of Section \ref{Sec:Nonlin}.
  
  With these technical results at hand, in Section \ref{Sec:Str_sol} we use a fixed point argument to construct for \eqref{Eq:Int_intro} global, exponentially decaying strong $X^4$-solutions for small data. To deal with the growth stemming from the presence of $\rg \mc P$ in the initial data, we employ a Lyapunov-Perron type argument, by means of which we also extract the blowup time $T$. In Section \ref{Sec:Upgrade_to_class}, we use regularity arguments to show that the constructed strong solutions are in fact classical. By translating the obtained result back to physical coordinates $(t,x)$, we get stability of $w_T$. Finally, by means of the equivalence of norms of $w$ and $u$, from this we derive Theorem \ref{Thm:Main}.

We finalize with a remark that our methods can straightforwardly be generalized so as to treat \eqref{Eq:w} in any higher dimension $n \geq 6$. This involves carrying out the analogous well-posedness theory for \eqref{Eq:Pert_intro} in the high-dimensional counterpart of the space $X^k$
\begin{equation*}
	X_n^k:= \dot{H}_{\text{rad}}^{\left\lfloor \frac{n}{2}\right\rfloor -1}(\R^n) \cap \dot{H}_{\text{rad}}^{k}(\R^n), \quad k \geq \left\lfloor \tfrac{n}{2}\right\rfloor + 1,
\end{equation*}
along with using the same techniques to treat the underlying spectral problem. 
\subsection{Notation and Conventions}
We write $\N$ for the natural numbers $\{1,2,3, \dots\}$, $\N_0 := \{0\} \cup \N$. Furthermore, $\R^+ := \{x \in \R: x >0\}$.
By $C_{c}^\infty(\R^d)$ we denote the space of smooth functions with compact support. In addition, we define
$ C_{\mathrm{rad	}}^{\infty}(\R^d) := \{ u \in C^{\infty}(\R^d): u \text{ is radial}\}$
and analogously $C^{\infty}_{c,\mathrm{rad	}}(\R^d)$. 
 Also, $\mc S_{\text{rad}}(\R^d)$ stands for  the space of radial Schwartz functions.
By $L^p(\Omega)$ for $\Omega \subseteq \R^d$, we denote the standard Lebesgue space. For the Fourier transform we use the following convention
\begin{equation}\label{Def:FourierT}
	\mc F_d(f)(\xi) = \frac{1}{(2 \pi)^{d/2}} \int_{\R^d} e^{- i \xi \cdot x} f(x) dx.
\end{equation}
For a closed linear operator $(L, \mc D(L))$, we denote by $\rho(L)$ the resolvent set, and for $\lambda \in \rho(L)$ we use the following convention for the resolvent operator
$R_{L}(\lambda):=(\lambda- L)^{-1}$.
The spectrum is defined as $\sigma(L) := \C \setminus \rho(L)$. The notation $a\lesssim b$ means $a\leq Cb$ for some $C>0$, and we write $a\simeq b$ if $a\lesssim b$ and $b \lesssim a$.  	
We use the common notation $\langle x \rangle := \sqrt{1+|x|^2}$ also known as the \textit{Japanese bracket}.

\section{Equation in similarity variables}\label{Sec:Sim_Var}

\noindent To restrict to radial solutions of \eqref{Eq:KS} we assume that $u(t,\cdot)=\tilde{u}(t,|\cdot|)$ and $u_0=\tilde{u}_0(|\cdot|)$. Then, we define the so-called reduced mass 
\begin{equation*}
	\tilde{w}(t,r):=\frac{1}{2r^d} \int_{0}^{r}\tilde u(t,s)s^{d-1}ds,
\end{equation*}
and let $n:=d+2$. The utility of the reduced mass is reflected in the fact that by means of the function $w(t,x):=\tilde{w}(t,|x|)$, the initial value problem for the system \eqref{Eq:KS} can be rewritten in the form of a single local semilinear heat equation in $w$
\begin{equation}\label{Eq:NLH}
  \begin{cases}
	~	\partial_t w - \Delta w = \Lambda w^2 + 2d w^2,\\
	~ w(0,\cdot)=w_0.
  \end{cases}
\end{equation}
Here $(t,x) \in [0,T) \times \R^{n}$, 
\begin{equation*}
	\Lambda f(x):=x \cdot \nabla f(x),
\end{equation*}
 and 
\begin{equation}
	 w_0=\tilde w_0(|\cdot|) \quad \text{for} \quad \tilde{w}_0(r)=	\frac{1}{2r^d} \int_{0}^{r}\tilde u_0(s) s^{d-1}ds.
\end{equation}
 Furthermore, the self-similar solution \eqref{Eq:GroundState} turns into
\begin{equation*}
	w_T(t,x):=\frac{1}{T-t}\phi_n\left( \frac{x}{\sqrt{T-t}}\right) \quad \text{where} \quad \phi_n(x)=\frac{2}{2(n-4)+|x|^2}.
\end{equation*}

\subsection{Similarity variables}
We pass to similarity variables
\begin{equation}\label{Def:Simil_var}
	\tau= \tau(t):=\ln \left(\frac{T}{T-t} \right),  \quad \xi=\xi(t,x):=\frac{x}{\sqrt{T-t}}, \quad T>0.
\end{equation}
Note that transformation \eqref{Def:Simil_var} maps the time slab $S_T:=[0,T) \times \R^n$ 
into the upper half-space $H_+:=[0,+\infty) \times \R^n$. Also, we define the rescaled dependent variable
\begin{equation}\label{Def:psi}
	\Psi(\tau,\xi):=(T-t)w(t,x)=Te^{-\tau}w(T-Te^{-\tau},\sqrt{T}e^{-\frac{\tau}{2}}\xi).
\end{equation}
Consequently, the evolution of $w$ inside $S_T$ corresponds to the evolution of $\Psi$ inside $H_+$. Furthermore, since
\begin{equation}\label{Eq:Diff_law}
	\partial_t = \frac{e^\tau}{T}\left(\partial_{\tau} + \frac{1}{2}\Lambda \right),  \quad \Delta_x = \frac{e^\tau}{T}\Delta_\xi,
\end{equation}
we get that the nonlinear heat equation \eqref{Eq:NLH} transforms into
\begin{equation}\label{Eq:NLH_sim_var}
	\left(\partial_\tau  - 	\Delta  + \frac{1}{2} \Lambda  + 1 \right)\Psi = \Lambda \Psi^2 + 2(n-2) \Psi^2,
\end{equation}
with the initial datum 
\begin{equation*}
	\Psi(0,\cdot)= Tw_0(\sqrt{T}\cdot).
\end{equation*}
For convenience, we denote
\begin{equation}\label{Def:L0}
		L_0  := \Delta  - \frac12 \Lambda  - 1.
\end{equation}
Now, we have that for all $n \geq 5$ the function
\begin{equation*}
	\phi_n(\xi)= \frac{2}{2(n-4)+|\xi|^2}
\end{equation*}
is a static solution to \eqref{Eq:NLH_sim_var}. To analyze stability properties of $\phi_n$, we study evolutions of initial data near $\phi_n$, and for that we consider the perturbation ansatz
\begin{equation*}
	\Psi(\tau,\cdot) = \phi_n + \psi(\tau).
\end{equation*} 
This leads to the central evolution equation of the paper
\begin{equation}\label{Eq:Perturbed}
	\partial_\tau \psi(\tau) = \big( L_0 + L' \big)\psi(\tau) +  N\big(\psi(\tau)\big), 
\end{equation}
where
\begin{equation}\label{Def:L'}
	 L'f =  2\Lambda(\phi_n f)+4  (n-2) \phi_n f 
	\quad \text{and} \quad  N(f)= \Lambda f^2 +  2(n-2) f^2.
\end{equation}
Furthermore, we write the initial datum as
\begin{equation}\label{Eq:Init_data_pert}
	\psi(0)= \Psi(0,\cdot) - \phi_n = \big(T w_0(\sqrt{T}\cdot)-w_0 \big) + v =: U(v,T)
\end{equation}
where, for convenience, we denoted
\begin{equation}\label{Def:InitCond_v}
	 v = w_0 - \phi_n.
\end{equation}
Now we fix $n=5$, and study the Cauchy problem \eqref{Eq:Perturbed}-\eqref{Eq:Init_data_pert}. For this we need a convenient functional setup.

\section{Functional setup}\label{Sec:Funct}

\noindent This section is devoted to defining the function spaces in which we study the Cauchy evolution of \eqref{Eq:Perturbed}. Furthermore, we gather the basic embedding properties that will be used later on.

\subsection{The space $X^k$}
Let $k \in \N$ and $f \in C^{\infty}_{c,\mathrm{rad	}}(\R^5)$. As usual, we  define the  Sobolev norm  $\| f \|_{\dot H^{k}(\R^5)} := \| |\cdot|^{k} \mc F f \|_{L^2(\R^5)}$ via the Fourier transform.
 Since we are concerned  with radial functions only, i.e.,  $f = \tilde f(|\cdot|)$, we straightforwardly get that
\[ \| f \|_{\dot H^{k}(\R^5)} = \| D^{k} f\|_{L^2(\R^5)},\]
with 
\begin{align*}
D^k f  := \begin{cases}  \Delta^{\frac{k}{2}}_{\mathrm{rad}} \tilde f(|\cdot|),  & \text{for \emph{k} even,} \\[3mm]
\big(\Delta^{\frac{k-1}{2}}_{\mathrm{rad}} \tilde f \big)'(|\cdot|),  &  \text{for \emph{k} odd,}
		\end{cases}					   
\end{align*}
where  $ \Delta_{\mathrm{rad}} := r^{-4} \partial_r ( r^{4} \partial_r)$ denotes the radial Laplace operator on $\R^5$. We also define an inner product on  $C^{\infty}_{c,\mathrm{rad	}}(\R^5)$ by
\begin{equation*}
	\langle f,g\rangle_{X^k} := \langle  D f, D g\rangle_{L^2(\R^5)} + \langle D^k f, D^k g\rangle_{L^2(\R^5)},
\end{equation*}
with corresponding norm $\|f \|_{X^k} := \sqrt{\langle f,f\rangle_{X^k}}$.
The central space of our analysis is the completion of $(C^{\infty}_{c,\mathrm{rad	}}(\R^5), \| \cdot \|_{X^k})$, and we denote it by $X^{k}$. Throughout the paper we frequently use the equivalence
\begin{align}
\| f \|^2_{X^k} = \| f\|^2_{\dot H^1(\R^5)} + \| f\|^2_{\dot H^k(\R^5)} \simeq \sum_{|\alpha| = 1} \| \partial^{\alpha} f \|^2_{L^2(\R^5)} + \sum_{|\alpha| = k} \| \partial^{\alpha} f \|^2_{L^2(\R^5)}.
\end{align}
Now, we list several properties of $X^k$ that will be used later on. First, a simple application of the Fourier transform yields the following interpolation inequality.

\begin{lemma}\label{Le:Interpolation}
Let $k \in \N$. Then 
\begin{align*}
\|\partial^\alpha f \|_{L^2(\R^5)} \lesssim \| f \|_{X^k}
\end{align*}
for all $f \in C^{\infty}_{c,\mathrm{rad	}}(\R^5)$, and all $\alpha \in \N_0^5$ with $1 \leq |\alpha|\leq k.$
\end{lemma}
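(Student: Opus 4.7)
The plan is to prove this purely in frequency space, as the lemma statement itself suggests. First I would let $m := |\alpha|$ with $1 \leq m \leq k$ and use Plancherel together with the identity $\widehat{\partial^\alpha f}(\xi) = (i\xi)^\alpha \widehat{f}(\xi)$ to write
\[
\|\partial^\alpha f\|_{L^2(\R^5)}^2 = \int_{\R^5} |\xi^\alpha|^2 |\widehat{f}(\xi)|^2\, d\xi \leq \int_{\R^5} |\xi|^{2m} |\widehat{f}(\xi)|^2\, d\xi,
\]
where the last inequality uses the trivial bound $|\xi^\alpha| \leq |\xi|^{|\alpha|}$.

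Next I would establish the pointwise inequality $|\xi|^{2m} \leq |\xi|^2 + |\xi|^{2k}$ valid for every $\xi \in \R^5$ and every integer $m$ with $1 \leq m \leq k$. This splits into the two trivial cases $|\xi| \leq 1$ (where $|\xi|^{2m} \leq |\xi|^2$ since $m \geq 1$) and $|\xi| \geq 1$ (where $|\xi|^{2m} \leq |\xi|^{2k}$ since $m \leq k$).

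Combining the two displays yields
\[
\|\partial^\alpha f\|_{L^2(\R^5)}^2 \leq \int_{\R^5}\bigl(|\xi|^2 + |\xi|^{2k}\bigr)|\widehat{f}(\xi)|^2\, d\xi = \|f\|_{\dot H^1(\R^5)}^2 + \|f\|_{\dot H^k(\R^5)}^2,
\]
and by the norm equivalence recorded just before the lemma the right-hand side is comparable to $\|f\|_{X^k}^2$, which closes the estimate.

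I do not expect any real obstacle: the argument uses only Plancherel and a one-line dyadic split of frequencies, and radiality plays no role here (it is only needed earlier to identify $\|f\|_{\dot H^k}$ with $\|D^k f\|_{L^2}$). The only minor point to watch is that Fourier transform, Plancherel's identity, and the identity $\widehat{\partial^\alpha f} = (i\xi)^\alpha \widehat{f}$ are all applied to $f \in C^\infty_{c,\mathrm{rad}}(\R^5) \subset \mc S_{\mathrm{rad}}(\R^5)$, so there are no domain or density issues to resolve at this stage.
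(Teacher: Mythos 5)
Your argument is correct and is precisely the ``simple application of the Fourier transform'' that the paper invokes without writing out: Plancherel, the bound $|\xi^\alpha|\leq|\xi|^{|\alpha|}$, and the elementary frequency split $|\xi|^{2m}\leq|\xi|^2+|\xi|^{2k}$. Nothing to add.
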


Also, it is straightforward to see that the following result holds. 

\begin{lemma}\label{Le:Functions_in_Xk}
Let $k \in \N$, $k \geq 3$. Then 
\[ X^{k} \hookrightarrow W_{\mathrm{rad}}^{{k-3},\infty}(\R^5).\]
In particular, elements of $X^k$ can be identified with  functions in $C_\mathrm{rad	}^{k-3}(\R^5)$. Furthermore, $X^{k}$ is a Banach algebra, i.e.,  
\[ \|  f g\|_{X^k} \lesssim \|f \|_{X^k} \| g \|_{X^k} \]
for all $f,g \in X^{k}$. 
In addition, $X^{k_2}$ embeds continuously into $X^{k_1}$ for $ k_1 \leq k_2$; shortly
\begin{equation}\label{Eq:Embed_Xk}
	X^{k_2}\hookrightarrow X^{k_1}.
\end{equation}
\end{lemma}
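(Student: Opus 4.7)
The plan is to verify the three claims in order, since each builds on the previous one. Throughout, by density it suffices to work with $f,g \in C^{\infty}_{c,\mathrm{rad}}(\R^5)$ and conclude by continuous extension.

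For the embedding $X^k \hookrightarrow W^{k-3,\infty}(\R^5)$, I would use Fourier analysis. For a multiindex $\alpha$ with $|\alpha| \leq k-3$, write $\widehat{\partial^{\alpha} f}(\xi) = (i\xi)^{\alpha} \hat f(\xi)$ and estimate the $L^1(\R^5)$ norm by Cauchy--Schwarz against the weight $(|\xi|+|\xi|^{k})$, namely
\[
\int_{\R^5} |\xi|^{|\alpha|} |\hat f(\xi)|\,d\xi
\leq \bigl\| (|\xi|+|\xi|^k) \hat f \bigr\|_{L^2(\R^5)} \left( \int_{\R^5} \frac{|\xi|^{2|\alpha|}}{(|\xi|+|\xi|^k)^2}\,d\xi\right)^{1/2}.
\]
The first factor is $\lesssim \|f\|_{X^k}$. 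The integral converges at the origin because $5 + 2|\alpha| - 2 > 0$ and at infinity because $5 + 2|\alpha| - 2k < 0$ precisely when $|\alpha| < k - 5/2$, which in integers means $|\alpha| \leq k-3$. Hence $\partial^{\alpha} f \in L^{\infty}(\R^5)$ with the claimed bound, and Riemann--Lebesgue gives continuity, yielding identification with a $C^{k-3}_{\mathrm{rad}}(\R^5)$ representative.

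Next, for the Banach algebra property, I would separately estimate $\|fg\|_{\dot H^1(\R^5)}$ and $\|fg\|_{\dot H^k(\R^5)}$. The first-derivative piece is immediate from the Leibniz rule and the just-proved $L^\infty$ embedding, since $k\geq 3$ implies $\|f\|_{L^\infty}, \|g\|_{L^\infty} \lesssim \|f\|_{X^k}, \|g\|_{X^k}$. For $\|fg\|_{\dot H^k}$, apply the Leibniz rule
\[
\partial^{\alpha}(fg) = \sum_{\beta \leq \alpha} \binom{\alpha}{\beta} (\partial^{\beta} f)(\partial^{\alpha-\beta} g), \qquad |\alpha|=k,
\]
and treat the endpoint terms ($|\beta|=0$ or $k$) directly by the $L^{\infty}$ embedding. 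For $0<|\beta|<k$, use Hölder with exponents $\frac{2k}{|\beta|}$ and $\frac{2k}{k-|\beta|}$ combined with the Gagliardo--Nirenberg inequality
\[
\|\partial^{\beta} h\|_{L^{2k/|\beta|}(\R^5)} \lesssim \|h\|_{\dot H^k(\R^5)}^{|\beta|/k} \,\|h\|_{L^{\infty}(\R^5)}^{1-|\beta|/k},
\]
and the analogous estimate for $g$; the exponents conspire so that each product term is bounded by $\|f\|_{X^k}\|g\|_{X^k}$ after invoking the $L^\infty$ embedding one more time.

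Finally, for $X^{k_2} \hookrightarrow X^{k_1}$ when $1 \leq k_1 \leq k_2$, I would split Fourier space at $|\xi|=1$: on $\{|\xi|\leq 1\}$ one has $|\xi|^{2k_1} \leq |\xi|^{2}$, while on $\{|\xi|>1\}$ one has $|\xi|^{2k_1} \leq |\xi|^{2k_2}$, so
\[
\|f\|_{\dot H^{k_1}(\R^5)}^2 \leq \|f\|_{\dot H^1(\R^5)}^2 + \|f\|_{\dot H^{k_2}(\R^5)}^2 \lesssim \|f\|_{X^{k_2}}^2,
\]
which, combined with the trivial control of the $\dot H^{1}$ piece, yields the embedding.

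The main technical hurdle is the Banach algebra estimate at the top derivative level, where the intermediate Leibniz terms require the Gagliardo--Nirenberg interpolation; the pointwise and nested-embedding parts are essentially frequency bookkeeping.
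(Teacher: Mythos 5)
Your proof is correct. The paper states this lemma without proof (it is declared ``straightforward''), and your argument supplies exactly the standard details one would expect: the Cauchy--Schwarz estimate against the weight $|\xi|+|\xi|^k$ correctly identifies $|\alpha|\leq k-3$ as the integer range where $|\alpha|<k-\tfrac{5}{2}$ makes the high-frequency integral converge in $\R^5$; the Leibniz--H\"older--Gagliardo--Nirenberg scheme (with the admissible endpoint exponent $\theta=|\beta|/k<1$) gives the algebra property; and the frequency splitting at $|\xi|=1$ gives the nested embedding.
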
 
The following result follows from an elementary approximation argument.

\begin{lemma}\label{Lemma_Def_C}
Let $k \in \N$, $k \geq 3$. Define
\[ \mc C:= \{ f \in C_{\emph{rad}}^{\infty}(\R^5): \text{ for } \kappa \in \N_0, \, |D^{\kappa} f(x)| \lesssim \langle x \rangle^{-2-\kappa}  \}. \]
Then $\mc C \subset X^{k} $.
\end{lemma}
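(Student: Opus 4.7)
The plan is to establish the two ingredients that place any $f \in \mathcal{C}$ in the completion $X^k$: that its $X^k$-norm is finite, and that it can be approximated in that norm by elements of $C^{\infty}_{c,\mathrm{rad}}(\mathbb{R}^5)$.

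Finiteness follows directly from $\|f\|_{X^k}^2 = \|Df\|_{L^2(\R^5)}^2 + \|D^k f\|_{L^2(\R^5)}^2$ together with the decay hypothesis applied at $\kappa = 1$ and $\kappa = k$: the integrands are pointwise controlled by $\langle x\rangle^{-6}$ and $\langle x\rangle^{-4-2k}$ respectively, and both are integrable on $\R^5$ for $k \geq 1$.

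For the approximation, I would fix a radial cutoff $\chi \in C^{\infty}_{c,\mathrm{rad}}(\R^5)$ with $\chi \equiv 1$ on $B_1(0)$ and $\chi \equiv 0$ outside $B_2(0)$, and set $f_R(x) := \chi(x/R) f(x) \in C^{\infty}_{c,\mathrm{rad}}(\R^5)$. Using the equivalence
\[
\|g\|_{X^k}^2 \simeq \sum_{|\alpha|=1}\|\partial^\alpha g\|_{L^2(\R^5)}^2 + \sum_{|\alpha|=k}\|\partial^\alpha g\|_{L^2(\R^5)}^2
\]
stated after the definition of $X^k$, the target $\|f - f_R\|_{X^k} \to 0$ reduces to $L^2$-convergence of $\partial^\alpha(f - f_R)$ for each $|\alpha| \in \{1, k\}$. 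The Leibniz rule gives
\[
\partial^\alpha(f - f_R) = \bigl(1 - \chi(\cdot/R)\bigr) \partial^\alpha f - \sum_{\substack{\beta \leq \alpha\\ \beta \neq 0}} \binom{\alpha}{\beta} R^{-|\beta|} (\partial^\beta\chi)(\cdot/R)\, \partial^{\alpha-\beta} f.
\]
The first summand vanishes in $L^2$ by dominated convergence, since $\partial^\alpha f \in L^2(\R^5)$ by the first step. Each cross-term is supported on the annulus $\{R \leq |x| \leq 2R\}$, and combining $|(\partial^\beta\chi)(\cdot/R)| \lesssim 1$ with the pointwise decay $|\partial^{\alpha-\beta} f(x)| \lesssim \langle x\rangle^{-2-(|\alpha|-|\beta|)}$ yields an $L^2$-norm-squared of order $R^{-2|\beta|}\cdot R^{-4-2|\alpha|+2|\beta|} \cdot R^5 = R^{1-2|\alpha|}$, which tends to $0$ for $|\alpha| \geq 1$.

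The only non-routine ingredient is the pointwise bound on arbitrary Cartesian derivatives $\partial^{\alpha-\beta} f$, since the hypothesis is phrased in terms of the radial operators $D^\kappa$. For radial $f(x) = F(|x|)$, however, $\partial^\alpha f$ decomposes into finitely many terms of the form $(\text{monomial in } x/|x|)\cdot r^{j-|\alpha|}\, F^{(j)}(r)$ with $1 \leq j \leq |\alpha|$, and the radial derivatives $F^{(j)}$ are extracted inductively from the $D^\kappa f$ via relations like $F' = D^1 f$ and $F'' = D^2 f - (4/r) F'$. Evaluated on the annulus $|x| \sim R$ for $R$ large, this produces the required decay. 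This transfer from $D^\kappa$ to $\partial^\alpha$ is the main technical nuisance of the argument, but it is entirely elementary and deserves only careful bookkeeping.
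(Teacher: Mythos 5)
Your argument is correct and is precisely the ``elementary approximation argument'' the paper alludes to without writing out (indeed, the same truncation $\chi(\cdot/R)f$ reappears in the proof of Proposition \ref{Prop:Semigroup_Sk}); the only presentational quibble is that the pointwise bounds $|\partial^\alpha f(x)|\lesssim \langle x\rangle^{-2-|\alpha|}$ from your last paragraph should logically come first, since they are what justifies both $\partial^\alpha f\in L^2(\R^5)$ and the annulus estimates.
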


Throughout the paper, we frequently use the commutator relation
\begin{align}\label{Eq:Comm_Lambda_2}
\partial^{\alpha} \Lambda = \Lambda \partial^{\alpha} + k \partial^{\alpha},
\end{align}
for $\alpha \in \N_0$, $|\alpha| = k$, as well as its radial analogue
\begin{equation}\label{Comm_DLambda}
D^k \Lambda = \Lambda D^k + k 	D^k.
\end{equation}

\subsection{Weighted $L^2-$spaces}

For $x \in \R^5$, we set 
 \begin{equation*}
 	\sigma_0(x):= e^{-|x|^2/4} \quad \text{and} \quad \sigma(x):= \phi(x)^{-2} e^{-|x|^2/4}
 \end{equation*}
 where 
\[\phi(x):= \phi_5(x) = \frac{2}{2+|x|^2}.\]
We then define the following weighted $L^2$-spaces of radial functions,
\begin{equation*}
	\mc H_0 := \{ f \in L^2_{\sigma_0}(\R^5) : f\text{ is radial} \}, \quad \mc H := \{ f \in L^2_{\sigma}(\R^5) : f\text{ is radial} \},
\end{equation*}
with the underlying inner products
\begin{equation*}
\langle f ,g \rangle_{\mc H_0} = \int_{\R^5}  f(x) \overline{g(x)} \sigma_0(x) dx,  \quad \langle f ,g \rangle_{\mc H} = \int_{\R^5}  f(x) \overline{g(x)} \sigma(x) dx.
\end{equation*}
 We note that both $\mc H_0$ and $\mc H$ have $C^{\infty}_{c,\mathrm{rad}}(\R^5)$ as a dense subset. An immediate consequence of the exponential decay of the weight functions is the following result.

\begin{lemma}\label{Le:Embedding_X_H}
Let  $k \in \N, k \geq 3$. Then
\begin{align*}\label{EstEmbedding:L2sigma_X}
 \| f \|_{\mc H_0} \lesssim  \| f \|_{\mc H} \lesssim \|f \|_{L^{\infty}(\R^5)} \lesssim \|f \|_{X^k}  
\end{align*}
for every $f \in C^{\infty}_{c,\mathrm{rad}}(\R^5)$. Consequently, we have the following continuous embeddings 
\[ X^k \hookrightarrow   \mc H \hookrightarrow \mc H_0  . \]
\end{lemma}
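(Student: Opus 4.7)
The plan is to first verify the three pointwise norm inequalities on the dense subset $C^{\infty}_{c,\mathrm{rad}}(\mathbb{R}^5)$ and then extend them by a standard density argument to obtain the claimed continuous embeddings.

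For the first inequality $\|f\|_{\mathcal{H}_0} \lesssim \|f\|_{\mathcal{H}}$, I would simply compare weights: since $\phi(x)^{-2} = \tfrac{1}{4}(2+|x|^2)^2 \geq 1$ for all $x \in \mathbb{R}^5$, we have the pointwise bound $\sigma_0(x) \leq \sigma(x)$, whence $\|f\|_{\mathcal{H}_0}^2 \leq \|f\|_{\mathcal{H}}^2$ by integrating $|f|^2$ against the two weights. For the second, $\|f\|_{\mathcal{H}} \lesssim \|f\|_{L^\infty(\mathbb{R}^5)}$, I would pull the $L^\infty$-norm out of the defining integral, leaving $\int_{\mathbb{R}^5} \sigma(x)\,dx$; since $\sigma(x) = \tfrac{1}{4}(2+|x|^2)^2 e^{-|x|^2/4}$, the Gaussian factor dominates the polynomial growth, so the integral is finite and this constant is absorbed into the implicit one. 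The third inequality $\|f\|_{L^\infty(\mathbb{R}^5)} \lesssim \|f\|_{X^k}$ for $k \geq 3$ is exactly the Sobolev-type embedding $X^k \hookrightarrow W^{k-3,\infty}_{\mathrm{rad}}(\mathbb{R}^5) \hookrightarrow L^\infty(\mathbb{R}^5)$ already supplied by Lemma \ref{Le:Functions_in_Xk}.

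To pass from these inequalities on test functions to the stated continuous embeddings $X^k \hookrightarrow \mathcal{H} \hookrightarrow \mathcal{H}_0$, I would argue as follows. Given $f \in X^k$, pick a sequence $(f_n) \subset C^{\infty}_{c,\mathrm{rad}}(\mathbb{R}^5)$ with $f_n \to f$ in $X^k$. The chain of inequalities just established shows that $(f_n)$ is Cauchy in both $\mathcal{H}$ and $\mathcal{H}_0$, and uniformly Cauchy on $\mathbb{R}^5$ via $X^k \hookrightarrow L^\infty$. By completeness of the weighted $L^2$-spaces and uniqueness of limits up to a.e.\ equivalence, the $\mathcal{H}$- and $\mathcal{H}_0$-limits may be identified with $f$, and the norm estimates pass to the limit.

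There is no genuine obstacle here: the whole lemma reduces to an elementary weight comparison, the integrability of the Gaussian-type weight $\sigma$, and the $L^\infty$-embedding from the preceding lemma. The only mild subtlety worth a sentence is confirming that the limits in the three spaces are consistent representatives of the same object, which follows from the uniform convergence provided by $X^k \hookrightarrow L^\infty(\mathbb{R}^5)$.
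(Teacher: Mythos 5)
Your argument is correct and matches the paper's (unstated but evidently intended) reasoning exactly: the weight comparison $\phi^{-2}\geq 1$ gives $\sigma_0\leq\sigma$, the integrability of $\sigma$ gives the $L^\infty$ bound, and the last inequality is Lemma~\ref{Le:Functions_in_Xk}. The paper dismisses this as ``an immediate consequence of the exponential decay of the weight functions,'' so your write-up simply supplies the routine details, including the (correct) density extension.
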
 

\subsection{Operators}
For convenience, we copy here the linear operators defined in \eqref{Def:L0} and \eqref{Def:L'} 
\begin{gather}\label{Def:Operators}
	L_0 f := \Delta f - \frac12 \Lambda f - f, \quad L' f:= 2\Lambda(\phi f)+12\phi f, \quad  N(f) := \Lambda f^2 +  6 f^2,
\end{gather}
and we formally let
\begin{align*}
	L:=L_0+L'.
\end{align*}
Note that by Lemma \ref{Lemma_Def_C}, $\phi \in X^{k}$ for any $k \in \N , k \geq 3$, and the same holds for $\Lambda \phi$.

\section{Linear theory}\label{Sec:Evol_Lin}
\noindent In this section we concentrate on the linear version of \eqref{Eq:Perturbed}, and show that it is well-posed in $X^k$ for $k \geq 3$. To accomplish this, we use the semigroup theory. Before we state the central result of the section, we make some technical preparations. First, we note that due to the underlying time-translation symmetry, the linear operator $L$ has a formal unstable eigenvalue, $\la=1$, with an explicit eigenfunction 
\begin{equation}\label{Def:nu}
	 \nu(x) :=  \phi(x) + \frac12 \Lambda \phi(x) = \frac{4}{(2+|x|^2)^2}.
\end{equation}
 According to Lemma \ref{Lemma_Def_C}, the function $\nu$ belongs to $X^k$, and this allows us to define a projection operator $\mc P : X^k \rightarrow X^k$ by
\begin{equation}\label{Def:P}
	\mc P f := \langle f,  g \rangle_{\mc H} ~  g, \quad \text{where} \quad g = \frac{\nu}{\| \nu \|_{\mc H}}.
\end{equation} 
This whole section is devoted to proving the following theorem, which, in short, states that the linear flow of \eqref{Eq:Perturbed} decays exponentially in time on the kernel of $\mc P$.

\begin{theorem}\label{Thm:DecayX}
Let $k \in \N$, $k \geq 3$. Then the operator $L: C^{\infty}_{c,\mathrm{rad	}}(\R^5)  \subset X^k \to X^k$ is closable, and its closure generates a strongly continuous semigroup  $(S(\tau))_{\tau \geq 0}$ of bounded operators on $X^k$. Furthermore, there exists $\omega_k \in (0,\frac{1}{4})$ such that
\begin{align}\label{Eq:S_decay}
\| S(\tau) (1- \mc P) f \|_{X^k}   \lesssim e^{-\omega_{k} \tau } \|(1- \mc P) f \|_{X^k} \quad \text{and} \quad S(\tau)\mc P f = e^{\tau} \mc P f,
\end{align}
for all $f \in X^k$ and all $\tau \geq 0$.
\end{theorem}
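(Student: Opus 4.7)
The plan has three stages: generate the semigroup on $X^k$ via Lumer-Phillips, locate $\sigma(\mc L)$ by working in the more favorable weighted space $\mc H$, and combine the two to extract the decay. For the first stage I would treat the free part $L_0 = \Delta - \tfrac12 \Lambda - 1$ in isolation. Using the commutator relation (\ref{Comm_DLambda}) together with the identity $\Re\langle \Lambda u, u\rangle_{L^2(\R^5)} = -\tfrac52\|u\|^2_{L^2(\R^5)}$ (integration by parts), one checks the dissipativity estimate $\Re \langle L_0 f, f\rangle_{X^k} \leq -\tfrac14 \|f\|_{X^k}^2$ for all $f \in C^\infty_{c,\mathrm{rad}}(\R^5)$ and $k \geq 1$. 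The genuinely delicate point is range density: for sufficiently large real $\la$ and arbitrary $g \in C^\infty_{c,\mathrm{rad}}(\R^5)$, one must solve $(\la - L_0) f = g$ in radial form (a second-order linear ODE in $r = |x|$), select the global decaying solution via variation of constants, and verify that it lies in the class $\mc C$ of Lemma \ref{Lemma_Def_C} and hence in $X^k$. Lumer-Phillips then produces the closure $\mc L_0$ and its semigroup. Since $\phi, \Lambda \phi \in \mc C \subset X^k$, the Banach algebra estimate in Lemma \ref{Le:Functions_in_Xk} shows that $L'$ is bounded on $X^k$, so by the bounded perturbation theorem the closure $\mc L$ of $L_0 + L'$ generates a $C_0$-semigroup $S(\tau)$ on $X^k$.

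For the second stage, the explicit relation $L\nu = \nu$ places $\la = 1 \in \sigma_p(\mc L)$ with eigenfunction $\nu \in X^k$, so $S(\tau)\mc P f = e^\tau \mc P f$ will be immediate once $\mc P$ is identified with the Riesz projection at $\la = 1$. To locate the rest of $\sigma(\mc L)$, I would pass to $\mc H$, in which $L$ becomes, up to a bounded lower-order term, symmetric: the drift $\tfrac12\Lambda$ is rendered essentially self-adjoint by the Gaussian weight hidden in $\sigma$, and the potential part is controlled by the decay of $\phi$. Employing the technique of \cite{GloSch20}, I would reduce the radial eigenvalue equation $(\la - L)u = 0$ to a second-order ODE parametrized by $\la$, construct a Frobenius-type solution analytic at $r = 0$ and a decaying solution at $r = \infty$ (both holomorphic in $\la$), and characterize $\mc H$-eigenvalues as zeros of their Wronskian. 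A case-by-case analysis in the half-plane $\{\Re \la \geq \omega_k\}$ for some $\omega_k \in (0,\tfrac14)$ would then show that $\la = 1$ is the unique such zero, and is algebraically simple.

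For the final stage, the spectral information transfers from $\mc H$ to $X^k$ via the embedding of Lemma \ref{Le:Embedding_X_H} together with uniform resolvent bounds: any putative $X^k$-eigenfunction with $\Re \la \geq \omega_k$ also belongs to $\mc H$, so the stage-two classification applies, and the spectrum of $\mc L$ in $X^k$ outside $\{1\}$ is confined to $\{\Re \la < \omega_k\}$. Identifying the Riesz projection at $\la = 1$ with $\mc P$ from (\ref{Def:P}) requires computing the $\mc H$-adjoint eigenfunction and verifying that it is a multiple of $\nu$; once this is in place, $\mc P$ commutes with $S(\tau)$. A Gearhart-Pr\"uss-type argument in the Hilbert space $X^k$, applied on the stable subspace $\ker \mc P$ with the uniform resolvent bound on $\{\Re \la \geq \omega_k\}\setminus\{1\}$, then yields $\|S(\tau)(1-\mc P)f\|_{X^k} \lesssim e^{-\omega_k \tau}\|(1-\mc P)f\|_{X^k}$. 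The principal obstacle is Stage 2: the Wronskian analysis in the strip $\{\omega_k \leq \Re \la < 1\}$ must exclude all hidden eigenvalues, and this is where the specific algebraic structure of $\phi$ and the machinery of \cite{GloSch20} become essential.
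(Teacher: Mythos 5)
Your overall architecture (Lumer--Phillips in $X^k$, spectral analysis in the weighted space, transfer of decay) matches the paper's, but two of your load-bearing steps do not work as stated. First, $L'$ is \emph{not} bounded on $X^k$: it contains the first-order term $2\Lambda(\phi f)=2\phi\Lambda f+2(\Lambda\phi)f$, and $\phi\Lambda f$ loses a derivative (by Lemma \ref{Lem:Lambda(fg)_est} one only gets $L':X^k\to X^{k-1}$), so the Banach algebra property of $X^k$ does not make it a bounded operator and the bounded perturbation theorem is unavailable. The paper instead proves dissipativity for the \emph{full} operator $L$ in $X^k$ (estimate \eqref{Inequ:DissEst}, where the derivative term $\phi\Lambda f$ is handled by integration by parts and the decay of $\phi$), and establishes range density for $\la-L$ itself by the ODE construction of Appendix \ref{Sec:App_ODE}; your plan of solving only $(\la-L_0)f=g$ does not suffice.

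Second, your mechanism for converting the $\mc H$-spectral information into $X^k$-decay is incomplete. Confining the point spectrum of $\mc L_k$ to $\{\Re\la<\omega_k\}\cup\{1\}$ does not yield semigroup decay (no spectral mapping theorem applies here, as the paper notes), and the Gearhart--Pr\"uss route requires \emph{uniform} resolvent bounds on $\ker\mc P$ in $X^k$ over a half-plane — you assert these but give no way to obtain them, and proving them is essentially as hard as the decay estimate itself. The paper's actual device is different and worth internalizing: the refined dissipative estimate \eqref{Inequ:DissEst} bounds $\Re\langle Lf,f\rangle_{X^k}$ by $(-\tfrac14+\tfrac{C_k}{R^2})\|f\|_{X^k}^2$ plus graph norms $\|f\|_{\mc G((1-\mc L)^{j/2})}$; since $\mc L$ is genuinely self-adjoint in $\mc H$ (so the $\mc H$-decay on $\ker\mc P$, including in graph norms, is free from the spectral theorem — no Wronskian analysis in a complex half-plane is needed, only exclusion of real eigenvalues in $[0,\infty)\setminus\{1\}$ via the supersymmetric factorization and the GGMT criterion), one gets a differential inequality for $\|S(\tau)(1-\mc P)f\|_{X^k}^2$ with an exponentially decaying source, which integrates to \eqref{Eq:S_decay}. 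Your Stage 2 is also heavier than necessary for the same reason: self-adjointness in $\mc H$ makes the spectrum real and discrete, so the half-plane case analysis you propose collapses to a real-line eigenvalue exclusion.
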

The mere fact that the closure of $L$ generates a semigroup on $X^k$ can be proved  by an application of the Lumer-Phillips Theorem, see Section  \ref{Evol:Xk}. However, determining the precise growth of the semigroup is highly non-trivial, in view of the non-self-adjoint nature of the problem and the lack of an abstract spectral mapping theorem that would apply to this situation. To get around this issue, we combine the analysis in $X^{k}$ with  the self-adjoint theory for $L$ in $\mc H$.

\subsection{The linearized evolution on $\mc H$}\label{Sec:Evol_H}
We equip $L_0$ and $L$ with domains  
\begin{align}\label{Def:Domain_L_L0}
\mc D(L_0)=\mc D(L):=C^{\infty}_{c,\mathrm{rad	}}(\R^5).
\end{align}
The following result for the operator $L_0$ is well-known, and we refer the reader to \cite{GloSch20}, Lemma $3.1$, for a detailed proof. 

\begin{lemma}
The operator $L_0: \mc D(L_0) \subset \mc H_0 \rightarrow \mc H_0$ is closable, and its closure $(\mc L_0,\mc D(\mc L_0))$ generates a strongly continuous  semigroup $(S_0(\tau))_{\tau \geq 0}$ of bounded operators on $\mc H_0$. Explicitly,
	\begin{equation*}
		[S_0(\tau)f](x)=e^{-\tau}(G_\tau\ast f)(e^{-\tau/2}x),
	\end{equation*}
where $G_\tau(x)=[4\pi \alpha(\tau)]^{-\frac{5}{2}}e^{-|x|^2/4\alpha(\tau)}$ and $\alpha(\tau)=1-e^{-\tau}$.
\end{lemma}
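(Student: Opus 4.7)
The plan is to verify the explicit formula directly: $S_0(\tau)$ is the conjugation of the standard heat semigroup by the similarity-variable rescaling, so its semigroup properties are inherited from the heat kernel and its generator can be read off by direct differentiation. Writing $H_t f := G_t \ast f$ and $D_\lambda f(x) := f(\lambda x)$, the formula factors as $S_0(\tau) = e^{-\tau}\, D_{e^{-\tau/2}}\, H_{\alpha(\tau)}$. The semigroup identity $S_0(\tau) S_0(s) = S_0(\tau+s)$ then follows algebraically from the intertwining relation $H_t D_\lambda = D_\lambda H_{\lambda^2 t}$, the heat semigroup identity $H_a H_b = H_{a+b}$, and the elementary check $\alpha(\tau) e^{-s} + \alpha(s) = \alpha(\tau+s)$.

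For $\mc H_0$-boundedness I would change variables $y = e^{-\tau/2} x$ in $\|S_0(\tau) f\|_{\mc H_0}^2$ to obtain
\begin{equation*}
\|S_0(\tau) f\|_{\mc H_0}^2 = e^{\tau/2} \int_{\R^5} |(H_{\alpha(\tau)} f)(y)|^2\, e^{-e^\tau |y|^2/4}\, dy,
\end{equation*}
then apply Cauchy--Schwarz under the convolution $H_{\alpha(\tau)} f = \int G_{\alpha(\tau)}(\cdot - z) f(z)\, dz$, inserting the splitting $G_{\alpha(\tau)}(y-z)^{1/2} e^{|z|^2/8}\cdot G_{\alpha(\tau)}(y-z)^{1/2} e^{-|z|^2/8}$. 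Completing the square in the resulting Gaussian integral $\int G_{\alpha(\tau)}(y-z) e^{|z|^2/4}\, dz$ produces a factor proportional to $e^{e^\tau |y|^2/4}$, which exactly cancels the weight in the outer integral and reduces everything to $\|f\|_{\mc H_0}^2$, yielding $\|S_0(\tau) f\|_{\mc H_0} \leq M(\tau)\|f\|_{\mc H_0}$ with $M$ locally bounded. Strong continuity at $\tau = 0$ is then verified on the dense subset $C^{\infty}_{c,\mathrm{rad}}(\R^5)$, where it reduces to the continuity of dilations together with strong continuity of the heat semigroup at $t=0$, and is extended by uniform boundedness.

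For $f \in C^{\infty}_{c,\mathrm{rad}}(\R^5)$, direct differentiation of the explicit formula at $\tau = 0$---legitimate by dominated convergence since $f$ is Schwartz---yields in $\mc H_0$
\begin{equation*}
\lim_{\tau \to 0^+} \frac{S_0(\tau) f - f}{\tau} = -f + \alpha'(0)\Delta f - \tfrac{1}{2}\Lambda f = \Delta f - \tfrac{1}{2}\Lambda f - f = L_0 f,
\end{equation*}
so $L_0$ is contained in the infinitesimal generator $A$ of $S_0(\tau)$; in particular $L_0$ is closable.

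The step requiring most care, and which I regard as the main obstacle, is identifying $\mc L_0 = \overline{L_0} = A$, i.e.\ showing that $C^{\infty}_{c,\mathrm{rad}}(\R^5)$ is a core for $A$. Since $H_t$ does not preserve compact support, one cannot invoke invariance directly. My strategy is two-step: first, the Schwartz class $\mc S_{\text{rad}}(\R^5)$ is invariant under $S_0(\tau)$, because $H_t$ preserves Schwartz decay and $D_\lambda$ is a homeomorphism of $\mc S$, so by the standard core criterion $\mc S_{\text{rad}}(\R^5)$ is a core for $A$; second, given $f \in \mc S_{\text{rad}}(\R^5)$ and a smooth radial cutoff $\chi_R$, one verifies both $\chi_R f \to f$ and $L_0(\chi_R f) \to L_0 f$ in $\mc H_0$ as $R \to \infty$, where the rapid Gaussian decay of the weight $\sigma_0$ renders all cutoff-error terms negligible. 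This upgrades the core property to $C^{\infty}_{c,\mathrm{rad}}(\R^5)$, gives $\mc L_0 = A$, and completes the proof.
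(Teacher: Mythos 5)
Your proposal is correct. Note that the paper does not actually prove this lemma --- it cites \cite{GloSch20}, Lemma 3.1, for the details --- so there is nothing to compare against line by line; your argument is the standard and expected one: factor $S_0(\tau)=e^{-\tau}D_{e^{-\tau/2}}H_{\alpha(\tau)}$, check the semigroup law via $H_tD_\lambda=D_\lambda H_{\lambda^2 t}$ and $e^{-s}\alpha(\tau)+\alpha(s)=\alpha(\tau+s)$, obtain the weighted bound (your Gaussian computation indeed closes, giving $\|S_0(\tau)\|_{\mc H_0\to\mc H_0}\leq e^{3\tau/2}$, with convergence of $\int G_{\alpha(\tau)}(y-z)e^{|z|^2/4}\,dz$ guaranteed by $\alpha(\tau)<1$), compute the generator on test functions, and upgrade the core from $\mc S_{\mathrm{rad}}(\R^5)$ (semigroup-invariant, hence a core) to $C^\infty_{c,\mathrm{rad}}(\R^5)$ by cutoff. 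The only steps where a referee might ask for more detail are the convergence of the difference quotient in the weighted $L^2$ norm at $\tau=0$ (which for Schwartz $f$ follows from $G_t\ast f-f=\int_0^t G_s\ast\Delta f\,ds$ with uniform Gaussian-decay control) and the trivial remark that $S_0(\tau)$ preserves radiality; neither is a gap.
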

 The operator $L$, on the other hand, has a self-adjoint realization in the space $\mc H$; we have the following fundamental result.

\begin{proposition}\label{Prop:Spectral_Prop}
		The operator $ L: \mc D(L) \subset \mc H \rightarrow \mc H$ is closable, and its closure $(\mc L,\mc D(\mc L))$ generates a  strongly continuous  semigroup $(S(\tau))_{\tau \geq 0}$ of bounded operators on $\mc H$. The spectrum of $\mc L$ consists of a discrete set of eigenvalues, and moreover
\begin{equation}\label{Eq:Spec_L}
\sigma(\mc L) \subset (-\infty,0) \cup \{1\},
\end{equation}
	where $\la=1$ is a simple eigenvalue with the normalized eigenfunction $g$ from \eqref{Def:P}.    Furthermore, for the orthogonal projection $\mc P : \mc H \rightarrow \mc H$ defined in \eqref{Def:P} there exists $\omega_0 >0$  such that
	\begin{equation}\label{Eq:S_decay_H}
		\|	S(\tau) (1- \mc P)  f \|_{\mc H }  \leq e^{-\omega_0 \tau} \|(1- \mc P)  f \|_{\mc H} \quad \text{and} \quad S(\tau) \mc P  f =  e^{\tau} \mc Pf,
	\end{equation}
for all $f \in \mc H$ and all  $\tau \geq 0$.
	\end{proposition}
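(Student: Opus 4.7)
The plan is to realize $\mc{L}$ as a self-adjoint, lower-semibounded operator on $\mc{H}$ with compact resolvent, and then determine its spectrum by combining a positivity argument with a direct ODE analysis. The key preliminary observation is that $L$ already has a symmetric structure in $\mc{H}$. Using $\nabla\phi=-x\phi^2$ I compute
\[
\nabla\log\sigma=-2\nabla\phi/\phi-x/2=(2\phi-\tfrac{1}{2})x,
\]
so that writing $L'f=2\phi\Lambda f+2(\Lambda\phi)f+12\phi f$ and collecting terms yields
\[
Lf=\sigma^{-1}\nabla\cdot(\sigma\nabla f)+Vf,\qquad V:=2\Lambda\phi+12\phi-1,
\]
with $V\in L^\infty(\R^5)$ smooth and decaying. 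Integration by parts on $C^{\infty}_{c,\mathrm{rad}}(\R^5)$ then shows that $L$ is symmetric and bounded above in $\mc{H}$, hence closable; the associated Dirichlet form $q(f,g)=\int\nabla f\cdot\nabla g\,\sigma\,dx-\int Vfg\,\sigma\,dx$ is closable and bounded below, and its Friedrichs extension is self-adjoint. A density argument in the spirit of Lemma 3.1 of \cite{GloSch20} shows that $C^{\infty}_{c,\mathrm{rad}}(\R^5)$ is a core, so the Friedrichs extension coincides with the closure $\mc{L}$, which in turn generates a $C_0$-semigroup on $\mc{H}$ by standard semigroup theory.

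Compactness of the resolvent follows because the weight $\sigma(x)$ grows like $e^{|x|^2/4}$ as $|x|\to\infty$, so a Rellich--Kondrachov argument shows the form domain embeds compactly into $\mc{H}$. Consequently $\sigma(\mc{L})$ is purely discrete, consisting of real eigenvalues of finite multiplicity that can accumulate only at $-\infty$. A direct calculation verifies $L\nu=\nu$, with $\nu$ as in \eqref{Def:nu}, so $\lambda=1\in\sigma(\mc{L})$ with the strictly positive eigenfunction $\nu$. By a Perron--Frobenius argument for the positivity-preserving semigroup generated by $\mc{L}$ (positivity being inherited from the Ornstein--Uhlenbeck kernel of $S_0$ under the bounded multiplicative/first-order perturbation $L'$), any eigenvalue admitting a positive eigenfunction must be principal and simple. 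Hence $\sigma(\mc{L})\subset(-\infty,1]$ and $\lambda=1$ is simple with eigenspace $\mathrm{span}(\nu)$.

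The main obstacle is ruling out eigenvalues in $[0,1)$. For this I would reduce the radial eigenvalue equation $(\lambda-L)f=0$ to a second-order ODE in $r=|x|$ and apply the matching technique developed in \cite{GloSch20}: one constructs explicit Frobenius-type fundamental systems at $r=0$ and at $r=\infty$, determines the relevant connection coefficients, and verifies that for $\lambda\in[0,1)$ no nontrivial solution is simultaneously regular at the origin and square-integrable against $\sigma$ at infinity. This connection analysis is the most delicate step. Once it is in place, discreteness of $\sigma(\mc{L})\cap(-\infty,0)$ and the absence of an eigenvalue at $0$ pin the spectrum down to $\{1\}\cup(-\infty,-\omega_0]$ for some $\omega_0>0$; the orthogonal spectral projection onto $\ker(\mc{L}-1)$ is then exactly $\mc{P}$ from \eqref{Def:P}, and the functional calculus for self-adjoint operators yields
\[
S(\tau)\mc{P}f=e^{\tau}\mc{P}f,\qquad \|S(\tau)(1-\mc{P})f\|_{\mc{H}}\leq e^{-\omega_0\tau}\|(1-\mc{P})f\|_{\mc{H}},
\]
as claimed.
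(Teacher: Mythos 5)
Your reduction of $L$ to the symmetric divergence form $\sigma^{-1}\nabla\cdot(\sigma\nabla f)+Vf$ with $V=2\Lambda\phi+12\phi-1$ is correct and is essentially equivalent to the paper's unitary conjugation of $-L$ to a half-line Schr\"odinger operator $Au=-u''+qu$; essential self-adjointness, semiboundedness and compactness of the resolvent all go through (though note that $\sigma\simeq|x|^4e^{-|x|^2/4}$ \emph{decays} at infinity — compactness comes from the confining effective potential $q(r)\sim r^2/16\to\infty$ after the ground-state transformation, not from growth of the weight as you state). Your Perron--Frobenius step is a legitimate and genuinely different route to $\sup\sigma(\mc L)=1$ with $\lambda=1$ simple: the paper does not argue this way, but since the semigroup is positivity improving and the resolvent is compact, orthogonality of the strictly positive eigenfunction $\nu$ to any eigenfunction of the top eigenvalue forces the conclusion.

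The genuine gap is the exclusion of eigenvalues in $[0,1)$, which is the entire analytic content of \eqref{Eq:Spec_L} and which you defer to "the most delicate step." The technique of \cite{GloSch20} that the paper relies on is \emph{not} a Frobenius/connection-coefficient matching; such connection coefficients are not computable in closed form for a continuum of spectral parameters $\lambda\in[0,1)$, which is exactly why one cannot simply "verify" them. What the paper actually does is: (i) use the explicit positive ground state $\tilde g=U^{-1}g$ of $1+A$ to factorize $A=A^-A^+-1$ and pass to the supersymmetric partner $A_{\mc S}=A^+A^--1$, which is isospectral to $A$ except that the eigenvalue corresponding to $\lambda=1$ has been removed; and (ii) apply the GGMT criterion (Theorem A.1 of \cite{GloSch20}) to $A_{\mc S}$, reducing the absence of eigenvalues in $(-\infty,0]$ to the single explicitly computable inequality $\int_0^5 r^3Q_S(r)^2\,dr<\tfrac{250}{3}$. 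Without this removal-plus-quantitative-bound step, or a rigorous substitute for it, your argument only yields $\sigma(\mc L)\subset(-\infty,1]$ with $\lambda=1$ simple; that does not produce the spectral gap $\omega_0>0$ on $\ker\mc P$, and hence not the decay estimate \eqref{Eq:S_decay_H}.
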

	
\begin{proof}
We define the unitary map
\[
U: L^2(\R^+) \to \mc H, \quad u \mapsto Uu = |\mathbb S^{4}|^{-\frac12} |\cdot|^{-2} e^{\frac{|\cdot|^2}{8}} \phi(|\cdot|) u(|\cdot|) 
\]
and note that $-L = U A U^{-1}$ with
\begin{align*}
A u(r) = - u''(r) + q(r) u(r)
\end{align*}
where 
\[ q(r) := \frac{2}{r^2} + \frac{r^2}{16} - \frac54 -\frac{16}{(2+r^2)^2}-\frac{4}{2+r^2}, \]
and $\mc D(A) = U^{-1} \mc D(L)$. Let us denote by $A_c$ the restriction of $A$ to $C^{\infty}_c(\R^+)$. Using standard results, we infer that $A_c$ is limit-point at both endpoints of the interval $(0,\infty)$ (see, e.g., \cite{Wei87}, Theorem 6.6, p.~96, and Theorem 6.4, p.~91). Hence, the unique self-adjoint extension of $A_c$ is given by its closure, which is the maximal operator $\mc A: \mc D(\mc A) \subset L^2(\R^+) \to L^2(\R^+)$, where
\[ \mc D(\mc A) := \{ u \in  L^2(\R^+): u,u' \in AC_{\mathrm{loc}}(\R^+), A u \in L^2(\R^+) \}, \]
and $\mc A u = A u$ for $u \in \mc D(\mc A)$. The inclusion $A_c \subset A \subset \mc A = \overline{A_c}$ implies $\overline{A} =\mc A$. Now, since $q$ is bounded from below, the same is true for the operator $\mc A$, i.e, there is a constant
 $\mu > 0$ such that $\mathrm{Re}\langle \mc A u, u \rangle_{L^2(\R^+)} \geq  -\mu \| u \|_{L^2(\R^+)}$ for all $u \in \mc D(\mc A)$. Since $q(r) \rightarrow +\infty$ when $r \to \infty$, $\mc A$ has compact resolvent, and its spectrum therefore consists of a discrete set of eigenvalues. The analogous properties of $L$ follow by the unitary equivalence. In particular, $ L: \mc D(L) \subset \mc H \rightarrow \mc H$ is essentially self-adjoint and its closure is given by $\mc L = - U \mc A U^{-1}$ with $\mc D(\mc L) =  U \mc D(\mc A)$. Moreover, we have that $\mathrm{Re}\langle \mc L f, f \rangle_{\mc H} \leq \mu \| f \|_{\mc H}$ for all $f \in \mc D(\mc L)$. This implies that $\mc L$, being self-adjoint, generates a strongly continuous semigroup  on $\mc H$. 

Next, we describe the spectral properties of $\mc L$. Obviously, $g \in \mc D(\mc L)$ and $ L g = g$ by explicit calculation. Hence,  $\tilde {g} := U^{-1} g $ satisfies $(1 + A) \tilde {g} = 0$. Moreover, $\tilde {g}$ is strictly positive on $(0, \infty)$. Hence, we have the factorization $A = A^{-} A^{+}-1$, where 
\[ A^{\pm} = \pm \partial_r  - \frac{ \tilde {g}'(r)}{\tilde { g}(r)}. \]
We define $ A_{\mc S} := A^{+} A^{-} - 1$. Explicitly, we have 
\begin{align*}
A_{\mc S} u(r) = - u''(r) + \frac{6}{r^2} u(r) + Q_S(r) u(r),
\end{align*} 
with
\[Q_S(r):= \frac{r^2}{16}-\frac{3}{4} - \frac{8}{2+r^2}.\]
This gives rise to the maximally defined self-adjoint operator $\mc A_{\mc S}: \mc D(\mc A_{\mc S}) \subset L^2(\R^+) \to L^2(\R^+)$ which is, by construction, isospectral to $\mc A$ except for $\lambda = -1$. For a detailed discussion on the above process of ``removing" an eigenvalue, see \cite{GloSch21}, Section B.1. Now, to prove \eqref{Eq:Spec_L}, it is enough to show that $\mc A_S$ does not have any eigenvalues in $(-\infty,0]$.
For this, we use an adaptation of the so-called GGMT criterion; see \cite{GloSch20}, Theorem A.1.  In particular, we show that for $p=2$, 
\begin{equation}\label{Eq:Assum}
\int_0^{\infty} r^{2p-1} |Q_S^-(r)|^p dr <\frac{(4\alpha+1)^{\frac{2p-1}{2}}p^p \Gamma(p)^2}{(p-1)^{p-1}\Gamma(2p)}
\end{equation} 
where $\alpha = 6$ and $Q_S^-(r) = \min \{ Q_S(r), 0 \}$. By calculating explicitly the root of $Q_S$ one can easily check that $Q_S(r) > 0$ for $r \geq 5$, hence
\[\int_0^{\infty} r^{3} Q_S^-(r)^2 dr < \int_0^{5}  r^{3} Q_S(r)^2 dr.\]
By noting that 
\[ r^3 Q_S(r)^2 = \frac{r^7}{256}-\frac{r^5}{r^2+2}-\frac{3 r^5}{32} +\frac{9 r^3}{16} +\frac{12 r^3}{r^2+2}+\frac{64
   r^3}{\left(r^2+2\right)^2}, \]
we can calculate the integral explicitly and find that 
\[\int_0^{5}  r^{3} Q_S(r)^2 dr = \frac{1305275}{55296}-18 \ln (2)+18 \ln (27) < \frac{250}{3} \]
where the right hand side of this inequality is the  value of the right hand side of Eq.~\eqref{Eq:Assum} for $p=2$ and $\alpha=6$. Theorem A.1 in \cite{GloSch20} now  implies that $\sigma(\mc A_S) \subset (0,+\infty)$, and  \eqref{Eq:Spec_L} follows. Furthermore, a simple ODE analysis yields that the geometric eigenspace of $\la=1$ is equal to $\langle g \rangle$. Consequently, $\mc P$ is the orthogonal projection onto $\rg \mc P$ in $\mc H$, and we readily get \eqref{Eq:S_decay_H}.
\end{proof}

Next statement shows that the growth bounds are preserved when  the linear evolution is measured in graph norms associated to fractional powers of the operator $1- \mc L$. This result will be crucial in Section \ref{Evol:Xk}, in particular, in conjunction with Lemma \ref{Le:Graphnorm_Xk} below. 
 
\begin{lemma}\label{Le:Graph_L}
There is a unique self-adjoint, positive operator $(1 - \mc L)^{\frac{1}{2}}$ with  $C^\infty_{c,\mathrm{rad}}(\R^5)$ as a core, such that $\big((1 - \mc L)^{\frac{1}{2}}\big)^2 = 1 - \mc L$.  For $k \in \N_0$ and $f \in \mc D((1- \mc L)^{k/2})$ we define the graph norm
\begin{align*}
\| f\|_{\mc G((1- \mc L)^{k/2})} := \|f \|_{\mc H} + \|(1- \mc L)^{k/2} f \|_{\mc H},
\end{align*}
and infer that
\[\|S(\tau)  (1 -\mc P)  f \|_{\mc G((1- \mc L)^{k/2})}  \leq  e^{-\omega_0 \tau} \|(1 - \mc P) f \|_{\mc G((1- \mc L)^{k/2})} \]
for every $k \in \N_0$, $f \in \mc D((1- \mc L)^{k/2})$ and all  $\tau \geq 0$.
\end{lemma}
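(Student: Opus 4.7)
The plan is to obtain both the construction of $(1-\mc L)^{1/2}$ and the graph-norm decay as consequences of the self-adjointness of $\mc L$ established in Proposition \ref{Prop:Spectral_Prop}. Since $\sigma(\mc L) \subset (-\infty, 0) \cup \{1\}$ by \eqref{Eq:Spec_L}, the operator $1-\mc L$ is self-adjoint with spectrum contained in $[0,\infty)$, hence positive. The Borel functional calculus then produces a unique self-adjoint positive square root $(1-\mc L)^{1/2}$ satisfying $((1-\mc L)^{1/2})^2 = 1-\mc L$, and uniqueness is part of the spectral theorem. For the core assertion, I would first note that $\mc D(L) = C^{\infty}_{c,\mathrm{rad}}(\R^5)$ is by definition of $\mc L$ a core for $\mc L$, hence for $1-\mc L$. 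A standard approximation argument via bounded spectral cut-offs $\chi_n(1-\mc L)$ then upgrades this to the statement that the same set is a core for $(1-\mc L)^{1/2}$: this is the classical fact that any core for a non-negative self-adjoint operator $A$ is a core for $A^{1/2}$.

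For the decay bound, the key observation is that $S(\tau) = e^{\tau \mc L}$, the spectral projection $\mc P$, and the operator $(1-\mc L)^{k/2}$ all arise from the Borel functional calculus of the single self-adjoint operator $\mc L$, and therefore commute on appropriate domains. In particular, $\mc P$ and $1-\mc P$ leave $\mc D((1-\mc L)^{k/2})$ invariant, and for $f \in \mc D((1-\mc L)^{k/2})$ one has the commutation identity
\begin{equation*}
(1-\mc L)^{k/2}\, S(\tau)(1-\mc P) f \;=\; S(\tau)(1-\mc P)\,(1-\mc L)^{k/2} f.
\end{equation*}
Applying the bound \eqref{Eq:S_decay_H} from Proposition \ref{Prop:Spectral_Prop} with the element $(1-\mc L)^{k/2} f \in \mc H$ in place of $f$ then yields
\begin{equation*}
\bigl\| (1-\mc L)^{k/2}\, S(\tau)(1-\mc P) f \bigr\|_{\mc H} \;\leq\; e^{-\omega_0 \tau}\, \bigl\| (1-\mc P)(1-\mc L)^{k/2} f \bigr\|_{\mc H}.
\end{equation*}
Combining this with $\|S(\tau)(1-\mc P) f\|_{\mc H} \leq e^{-\omega_0\tau}\|(1-\mc P)f\|_{\mc H}$ and adding the two inequalities produces the stated graph-norm estimate, after noting that $(1-\mc L)^{k/2}$ commutes with $1-\mc P$ when acting on $\mc D((1-\mc L)^{k/2})$.

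No genuinely hard step is expected here: the entire argument reduces to the functional calculus of the self-adjoint operator $\mc L$ obtained in Proposition \ref{Prop:Spectral_Prop}. The only place where care is needed is the justification of the commutation identity for unbounded functions of $\mc L$; this is handled in the standard way by bounded spectral truncations $\chi_n(\mc L)$ converging strongly to the identity on $\mc D((1-\mc L)^{k/2})$, which allows one to pass commutation from the bounded functional calculus, where it is immediate, to the unbounded setting.
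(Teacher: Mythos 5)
Your proposal is correct and follows essentially the same route as the paper: both rest on the spectral theorem for the self-adjoint operator $\mc L$ from Proposition \ref{Prop:Spectral_Prop}, the standard fact that a core for a non-negative self-adjoint operator is a core for its square root (which is exactly the approximation the paper carries out via $\| (1- \mc L)^{\frac{1}{2}} f\|_{\mc H} \lesssim \| (1- \mc L)f\|_{\mc H}  + \|f \|_{\mc H}$), and the commutation of $(1-\mc L)^{k/2}$ with the semigroup and the projection to transfer the $\mc H$-decay of \eqref{Eq:S_decay_H} to the graph norm. The only cosmetic difference is that you apply the commutation directly at the level of $(1-\mc L)^{k/2}$ whereas the paper phrases the same step as an induction on $k$ using $(1-\mc L)^{1/2}$.
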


\begin{proof}
By standard results (see, e.g., \cite{kato}, p.~281, Theorem 3.35), 
the square root  $(1 - \mc L)^{\frac{1}{2}}$  exists and commutes with any bounded operator that commutes with $\mc L$. 
We show that $C^\infty_{c,\mathrm{rad}}(\R^5)$ is a core of $(1-\mc L)^\frac{1}{2}$.
Let $\varepsilon > 0$. Since $\mc D(\mc L)$ is core for $(1 - \mc L)^{\frac{1}{2}}$ and $C^\infty_{c,\mathrm{rad}}(\R^5)$ is a core for $1 - \mc L$ there is $\tilde f \in C^\infty_{c,\mathrm{rad}}(\R^5)$ such that 
 $\|f - \tilde f \|_{\mc H} + \|(1- \mc L)^{\frac{1}{2}} (f - \tilde f) \|_{\mc H} < \varepsilon$, by using that $\| (1- \mc L)^{\frac{1}{2}} f\|_{\mc H} \lesssim \| (1- \mc L)f\|_{\mc H}  + \|f \|_{\mc H}$. The growth bounds for the  semigroup can be proved by induction using the fact that $(1-\mc L)^{\frac12}$ commutes with the projection and the semigroup. 
\end{proof}

We conclude this section by proving two technical results that will be crucial in the sequel.

\begin{lemma}\label{Le:BoundedDom_Norm}
Let $k \in \N_0$ and $R >0$. Then 
\[
 \| \partial^{\alpha} f \|_{L^2(\B^5_R)} \lesssim \sum_{j=0}^{k} \| f \|_{\mc G((1-\mc L)^{j/2})},
 \]
 for all $f \in C^{\infty}_{c,\mathrm{rad}}(\R^5)$ and all $\alpha \in \N_0^5$ with $|\alpha| = k$.
\end{lemma}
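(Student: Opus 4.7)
The proof proceeds by induction on $k \geq 0$, establishing the slightly stronger statement
\[
\|f\|_{H^k(\B^5_R)} \lesssim \sum_{j=0}^{k} \|f\|_{\mc G((1-\mc L)^{j/2})},
\]
from which the lemma follows via $\|\partial^\alpha f\|_{L^2(\B^5_R)} \leq \|f\|_{H^k(\B^5_R)}$ for $|\alpha|=k$. Two separate base cases $k=0$ and $k=1$ are required; then a uniform inductive step for $k \geq 2$ is driven by interior elliptic regularity for $-\Delta$ on $\R^5$. The case $k=0$ is immediate: $\sigma$ is continuous and strictly positive on $\R^5$, hence bounded below on $\B^5_R$, so $\|f\|_{L^2(\B^5_R)} \lesssim \|f\|_{\mc H}$.

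For $k=1$ I exploit the quadratic form of $1-\mc L$. Since $C^{\infty}_{c,\mathrm{rad}}(\R^5)$ is a core for $(1-\mc L)^{1/2}$ by Lemma \ref{Le:Graph_L}, for $f \in C^{\infty}_{c,\mathrm{rad}}(\R^5)$ one has $\|(1-\mc L)^{1/2} f\|_{\mc H}^2 = \langle (1-L)f, f\rangle_{\mc H}$. A direct integration-by-parts computation using the explicit formulas for $L_0$ and $L'$ rewrites this as
\[
\|(1-\mc L)^{1/2} f\|_{\mc H}^2 = \int_{\R^5} |\nabla f|^2 \sigma \, dx + \int_{\R^5} V(x)\, f(x)^2 \sigma(x) \, dx
\]
for an explicit smooth $V$. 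The crucial point is that $V$ is \emph{bounded} on $\R^5$: the a priori dangerous $|x|^2$-growing contributions coming from $-\Delta\sigma/(2\sigma)$ and from the drift $\tfrac12 \Lambda$ cancel at leading order---a manifestation of the Ornstein--Uhlenbeck structure hidden in the Gaussian piece $\sigma_0 = e^{-|x|^2/4}$---while the $L'$-perturbation contributes only bounded extra terms because $\phi$ and $\Lambda\phi$ decay. Given this, $\int |\nabla f|^2 \sigma \lesssim \|(1-\mc L)^{1/2} f\|_{\mc H}^2 + \|f\|_{\mc H}^2$, and using $\sigma \geq c(R) > 0$ on $\B^5_R$ yields the case $k=1$.

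For the inductive step with $k \geq 2$, assume the claim for all lower indices. Standard interior elliptic regularity for $-\Delta$ gives, for any $R' > R$,
\[
\|f\|_{H^k(\B^5_R)} \lesssim \|\Delta f\|_{H^{k-2}(\B^5_{R'})} + \|f\|_{L^2(\B^5_{R'})}.
\]
Rearranging $(1-L)f = -\Delta f + (\tfrac12 - 2\phi) \Lambda f + (2 - 2\Lambda\phi - 12\phi) f$ and using that $\phi$ and all its derivatives are bounded, I obtain $\|\Delta f\|_{H^{k-2}(\B^5_{R'})} \lesssim \|(1-L) f\|_{H^{k-2}(\B^5_{R'})} + \|f\|_{H^{k-1}(\B^5_{R'})}$. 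The last term is handled by the inductive hypothesis applied to $f$. For the first, $g := (1-L)f$ still lies in $C^{\infty}_{c,\mathrm{rad}}(\R^5)$ (as $\phi$ is smooth and $f$ is compactly supported), and the inductive hypothesis applied to $g$, combined with the spectral identity $(1-\mc L)^{j/2} g = (1-\mc L)^{(j+2)/2} f$ (valid on $C^{\infty}_{c,\mathrm{rad}}(\R^5)$, which lies in the domain of every power of $1-\mc L$), yields $\|g\|_{H^{k-2}(\B^5_{R'})} \lesssim \sum_{j=0}^{k} \|f\|_{\mc G((1-\mc L)^{j/2})}$, closing the induction. The principal obstacle is the $k=1$ base case, specifically the verification that the potential $V$ is bounded; the remaining work is a routine application of interior elliptic regularity, and the sum on the right-hand side of the lemma is tuned precisely so that the shifting identity $(1-\mc L)^{j/2} g = (1-\mc L)^{(j+2)/2} f$ exactly absorbs the inductive hypothesis on $g$ into a sum over $f$.
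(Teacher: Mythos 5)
Your proof is correct and follows the same overall strategy as the paper: induction on $k$, a first\-/order base case extracted from the quadratic form of $1-\mc L$, and an inductive step that trades $\Delta f$ for $(1-L)f$ plus lower\-/order terms, with the graph norms shifting via $(1-\mc L)^{j/2}(1-\mc L)f=(1-\mc L)^{(j+2)/2}f$. The implementation differs in two respects. For $k=1$, the paper does not compute the potential $V$ and check its boundedness; instead it exhibits an explicit factorization $1-L=B^*B$ with $Bf=\phi^{2}\,\partial_{|x|}(\phi^{-2}f)$, so that $\|(1-\mc L)^{1/2}f\|_{\mc H}=\|Bf\|_{\mc H}$ and the gradient is controlled because $\nabla\phi/\phi$ is bounded --- this is the same Ornstein--Uhlenbeck cancellation you invoke, just packaged so that nothing needs to be verified term by term (your assertion that $V$ is bounded is nonetheless true, as one checks that the $|x|^2/8$ contributions from $-\Delta\sigma/(2\sigma)$ and $-x\cdot\nabla\sigma/(4\sigma)$ cancel). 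For the inductive step, the paper stays in the radial calculus, estimating $\|D^{j+1}f\|_{L^2(\B^5_R)}=\|D^{j-1}D^2f\|_{L^2(\B^5_R)}$ directly and then converting radial derivatives $D^j$ to Cartesian ones via the appendix lemma on local Sobolev norms; you instead invoke interior elliptic regularity on nested balls, which delivers the full $H^k(\B^5_R)$ norm at once and makes that conversion lemma unnecessary. Both routes are sound; yours leans on a standard black box where the paper is self-contained and explicitly radial. One small point to make explicit: your inductive hypothesis must be quantified over all radii (or at least over $R'=2R$), since the elliptic estimate enlarges the ball --- this is harmless because the statement holds for every $R>0$ with $R$-dependent constants.
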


\begin{proof}
First, for $f \in C^{\infty}_{c,\mathrm{rad}}(\R^5)$  and $\mu(x) := |x|^4 e^{-|x|^2/4 }$ we define
\begin{align*}
Bf(x) :=  \phi(x)^{2} \frac{d}{d|x|} \left ( \phi(x)^{-2} f(x) \right ), \quad B^*f(x) := - \mu(x)^{-1} \frac{d}{d|x|} \left( \mu(x)   f(x) \right).
\end{align*} 
By inspection, it follows that $1 - L = B^* B$ and thus,  
\begin{align*}
\|(1-\mc L)^{\frac12} f \|^2_{\mc H} = \langle (1-\mc L) f,f \rangle_{\mc H}  = \langle B^*B f, f \rangle_{\mc H} = \langle B f, B f \rangle_{\mc H} = \|B f\|^2_{\mc H},
\end{align*}
where the fact that $B$ and $B^*$ are formally adjoint follows from a straightforward calculation. Hence, 
\begin{align*}
\|(1-\mc L)^{\frac12} f \|_{\mc H} = \|B f\|_{\mc H}
\end{align*}
for $f \in C^{\infty}_{c,\mathrm{rad}}(\R^5)$. With this at hand, we prove the lemma by induction. For $k =0$ the inequality is immediate, and for $k=1$ we have that 
\begin{align*}
\| \partial_i f \|_{L^2(\B^5_R)}  \lesssim \|  \tilde f'(|\cdot|) \|_{L^2(0,R)} \lesssim  \|Bf\|_{\mc H} + \|f \|_{\mc H} = \|(1-\mc L)^{\frac12} f \|_{\mc H} + \|f \|_{\mc H}.
\end{align*}
Assume that the claim holds up to some  $k \geq 1$. Then we have that for all $1 \leq j \leq k$
\begin{align*}
\| D^{j+1} f\|_{L^2(\B^5_R)}  = & \| D^{j-1}D^2f \|_{L^2(\B^5_R)} \lesssim \| D^{j-1}(1-\mc L)f \|_{L^2(\B^5_R)} + \| D^{j-1}f \|_{L^2(\B^5_R)} \\ &+
\| D^{j-1}\Lambda f \|_{L^2(\B^5_R)} + \| D^{j-1}\Lambda (\phi f) \|_{L^2(\B^5_R)} + \| D^{j-1}(\phi f) \|_{L^2(\B^5_R)} \\
& \lesssim  \sum_{|\beta|\leq j-1}\| \partial^{\beta} (1- \mc L)f \|_{L^2(\B^5_R)} + \sum_{|\beta|\leq j} \| \partial^\beta f \|_{L^2(\B^5_R)} \\
& \lesssim \sum_{i=0}^{j-1}\| (1- \mc L)f \|_{\mc G((1-\mc L)^{i/2})} + \sum_{i=0}^{j}\| f \|_{\mc G((1-\mc L)^{i/2})} \\
& \lesssim \sum_{i=0}^{j+1}\| f \|_{\mc G((1-\mc L)^{i/2})}.
\end{align*}
Lemma \ref{Lem:Local_Sobolev} then implies the claim for $|\alpha|=k+1$.
\end{proof}

Finally, we show that graph norms can be controlled by $X^k$-norms.

\begin{lemma}\label{Le:Graphnorm_Xk}
Let $k \in \N$, $k \geq 3$. Then
\begin{align*}
\|(1-\mc L)^{\kappa/2} f \|_{\mc H} \lesssim \|f \|_{X^k}  
 	\end{align*}
for all $f \in X^k$ and all $\kappa \in \{0, \dots, k \}$.
\end{lemma}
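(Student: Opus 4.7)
The plan is to establish the bound on the dense subspace $C^\infty_{c,\mathrm{rad}}(\R^5)$, which by Lemma \ref{Le:Graph_L} is a core for $(1-\mc L)^{\kappa/2}$, and then extend to $X^k$ by an approximation argument. Concretely, given $f \in X^k$, I would pick $f_n \in C^\infty_{c,\mathrm{rad}}(\R^5)$ with $f_n \to f$ in $X^k$; Lemma \ref{Le:Embedding_X_H} yields $f_n \to f$ in $\mc H$, and the a priori estimate applied to $f_n - f_m$ shows that $\{(1-\mc L)^{\kappa/2} f_n\}$ is Cauchy in $\mc H$. Closedness of $(1-\mc L)^{\kappa/2}$ (self-adjointness, via Lemma \ref{Le:Graph_L}) then places $f$ in its domain and transports the bound to the limit.

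For the core estimate on $f \in C^\infty_{c,\mathrm{rad}}$, write $\kappa = 2m + \epsilon$ with $\epsilon \in \{0,1\}$. Since $1 - L$ maps $C^\infty_{c,\mathrm{rad}}$ into itself (it preserves smoothness, radial symmetry, and compact support), the iterate $(1-L)^m f$ still belongs to this space and, by induction, agrees with $(1 - \mc L)^m f$. This reduces the task to bounding $\|(1-\mc L)^{\epsilon/2}(1-L)^m f\|_{\mc H}$. For $\epsilon = 1$, the identity $\|(1-\mc L)^{1/2} g\|_{\mc H} = \|B g\|_{\mc H}$ from the proof of Lemma \ref{Le:BoundedDom_Norm} further reduces this to $\|B(1-L)^m f\|_{\mc H}$. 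Expanding $1 - L = 2 - \Delta + \tfrac{1}{2}\Lambda - 2\phi\,\Lambda - 2(\Lambda\phi) - 12\phi$ as a second-order differential operator and iterating produces
\[ (1-L)^m f = \sum_{|\beta|\leq 2m} p_\beta(x)\,\partial^\beta f, \]
with coefficients $p_\beta \in C^\infty(\R^5)$ of at most polynomial growth. The only source of unbounded growth is the linear factor inside $\Lambda = x\cdot\nabla$, while derivatives of $\phi$ are all bounded and decay at infinity. Applying $B = \partial_r + \tfrac{4|x|}{2+|x|^2}$, if $\epsilon = 1$, preserves this structure and raises the order of the expansion to $\kappa$.

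The decisive analytic observation is that the weight $\sigma(x) = \phi(x)^{-2} e^{-|x|^2/4}$ satisfies $|p(x)|^2 \sigma(x) \lesssim e^{-|x|^2/8}$ uniformly on $\R^5$ for any polynomial $p$, because $\phi^{-2}$ grows only like $\langle x\rangle^4$ and is swamped by half of the Gaussian factor. Consequently,
\[ \|(1-\mc L)^{\kappa/2} f\|_{\mc H}^2 \lesssim \sum_{|\gamma|\leq \kappa} \int_{\R^5} |\partial^\gamma f(x)|^2 e^{-|x|^2/8}\,dx. \]
For $|\gamma|=0$ I would bound the integral by $\|f\|_{L^\infty(\R^5)}^2 \int_{\R^5} e^{-|x|^2/8}\,dx \lesssim \|f\|_{X^k}^2$ using the embedding in Lemma \ref{Le:Embedding_X_H}, and for $1 \leq |\gamma| \leq \kappa \leq k$ I would simply drop the exponential factor and apply Lemma \ref{Le:Interpolation} to get $\|\partial^\gamma f\|_{L^2(\R^5)}^2 \lesssim \|f\|_{X^k}^2$. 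The main obstacle in the whole argument is the bookkeeping of polynomial growth when iterating $(1-L)^m$, since one must verify that the powers of $x$ arising from $\Lambda$ do not eventually overcome the Gaussian decay; once this polynomial-versus-exponential contest is resolved in favor of the weight (which happens for every fixed $\kappa$), the conclusion is immediate.
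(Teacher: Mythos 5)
Your argument is correct and follows essentially the same route as the paper: prove the estimate on $C^{\infty}_{c,\mathrm{rad}}(\R^5)$, expand $(1-\mc L)^{\kappa/2}$ into derivatives of order at most $\kappa$ with polynomially bounded coefficients, let the Gaussian part of $\sigma$ absorb the polynomial growth, close with Lemma \ref{Le:Interpolation}, and then extend by density and closedness. The only (immaterial) deviation is that you handle the zero-order term via the $L^\infty$-embedding of Lemma \ref{Le:Embedding_X_H}, whereas the paper uses Hardy's inequality $\||\cdot|^{-1}f\|_{L^2(\R^5)}\lesssim \|f\|_{X^k}$.
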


\begin{proof}
We prove the statement for $f \in C^{\infty}_{c,\mathrm{rad}}(\R^5)$. The claim then follows by density and the closedness of $(1-\mc L)^{\kappa/2}$. First, it is easy to see that
\begin{align*}
\| (1 -\mc L)^{\kappa/2} f \|_{\mc H} \lesssim  \sum_{|\alpha|\leq \kappa} \|p_\alpha \partial^\alpha f \|_{\mc H}
\end{align*}
for smooth, radial and polynomially bounded functions $p_\alpha$. Using the  exponential decay of the weight function $\sigma$ along with interpolation, see Lemma \ref{Le:Interpolation}, and Hardy's inequality, we get 
\begin{align*}
\| (1 -\mc L)^{\kappa/2} f \|_{\mc H}  & \lesssim 
\sum_{1\leq |\alpha| \leq \kappa} \| \partial^\alpha f \|_{L^2(\R^5)} + \||\cdot|^{-1} f \|_{L^2(\R^5)}    \lesssim \|f \|_{X^k}. 
\end{align*}
\end{proof}

\subsection{The linearized evolution on $X^k$}\label{Evol:Xk}
  With the technical results from above, we now show that the linear evolution of \eqref{Eq:Perturbed} is well-posed in $X^k$. Since $C^{\infty}_{c,\mathrm{rad}}(\R^5)$ is dense in $X^{k}$, we consider $L$ as defined in \eqref{Def:Domain_L_L0}.

\begin{proposition}\label{Prop:Semigroup_Sk}
Let $k \in \N$, $k \geq 3$. The operator $L: \mc D(L) \subset X^k \to X^k$ is closable and with $(\mc L_k, \mc D(\mc L_k))$ denoting the closure we have that $\mc C \subset  \mc D(\mc L_k)$.  Furthermore, the operator $\mc L_k$  generates a strongly continuous semigroup $(S_{k}(\tau))_{\tau \geq 0}$ of  bounded operators on $X^k$, which coincides with the restriction of $S(\tau)$ to $X^k$, i.e,
\[S_{k}(\tau)= S(\tau)|_{X^{k}}\]
for all $\tau \geq 0$. 
\end{proposition}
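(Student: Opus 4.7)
The plan is to produce the $X^k$-semigroup via the Lumer--Phillips theorem and then identify it with $S(\tau)|_{X^k}$ through a uniqueness argument carried out in $\mc H$. This decomposes into three tasks: \emph{(a)} dissipativity of $L-\omega$ on $(X^k,\langle\cdot,\cdot\rangle_{X^k})$ for some $\omega>0$; \emph{(b)} the range condition, i.e., density of $\rg(\lambda - L)$ in $X^k$ for some large $\lambda$; and \emph{(c)} identification of the generated semigroup together with the inclusion $\mc C\subset\mc D(\mc L_k)$.

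For \emph{(a)}, fix $f\in C^\infty_{c,\mathrm{rad}}(\R^5)$. Since $\langle\cdot,\cdot\rangle_{X^k}$ only sees $Df$ and $D^k f$, I would commute $D^j$ past $L_0$ via \eqref{Comm_DLambda} for $j=1,k$ and integrate by parts. The principal Laplacian contributes $-\|D^{j+1}f\|_{L^2}^2\leq 0$; the drift $-\tfrac12\Lambda$ produces a bounded quadratic form via $2\,\mathrm{Re}\langle\Lambda u,u\rangle_{L^2(\R^5)}=-5\|u\|_{L^2}^2$; the zero-order $-1$ is dissipative; and the commutator $[D^j,L_0]$ gives a bounded multiple of $\|D^j f\|_{L^2}^2$. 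Finally, $L'$ is bounded on $X^k$ because $\phi,\Lambda\phi\in\mc C\subset X^k$ by Lemma \ref{Lemma_Def_C} and $X^k$ is an algebra by Lemma \ref{Le:Functions_in_Xk}, so it contributes at most $C\|f\|_{X^k}^2$. Summing the $j=1$ and $j=k$ contributions gives $\mathrm{Re}\langle Lf,f\rangle_{X^k}\leq\omega\|f\|_{X^k}^2$ for some $\omega>0$.

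The range condition \emph{(b)} is the main technical obstacle. For $g$ in a dense subset of $X^k$, for instance $\mc C$, I would solve $(\lambda - L)f = g$ by reducing to a second-order linear radial ODE on $(0,\infty)$ with a regular singular point at the origin (arising from the radial Laplacian on $\R^5$) and a first-order drift at infinity. For sufficiently large $\lambda$ one constructs two fundamental solutions of the homogeneous problem by a perturbative/contraction argument off the coefficient-constant equation, selecting the branch regular at $r=0$ and the decaying branch at infinity; variation of parameters then produces a global solution $f$. Verifying $f\in X^k$ is the delicate step flagged in the introduction: combining the $\langle x\rangle^{-2-\kappa}$ decay of $g\in\mc C$ and of $\phi$ with repeated differentiation of the ODE and Hardy-type estimates near the origin should yield the pointwise bounds $|D^\kappa f(x)|\lesssim\langle x\rangle^{-2-\kappa}$ for $0\leq\kappa\leq k$, placing $f\in\mc C\subset X^k$.

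With \emph{(a)} and \emph{(b)} in hand, the Lumer--Phillips theorem yields closability of $L$ on $X^k$; its closure $\mc L_k$ generates a strongly continuous semigroup $S_k(\tau)$ with $\|S_k(\tau)\|_{X^k}\leq e^{\omega\tau}$. The inclusion $\mc C\subset\mc D(\mc L_k)$ then follows by cutoff approximation: for $h\in\mc C$, set $h_n(x):=\chi(x/n)h(x)$ with a smooth compactly supported $\chi$ equal to $1$ near the origin; the pointwise decay built into the definition of $\mc C$ guarantees $h_n\to h$ and $Lh_n\to Lh$ in $X^k$, so $h\in\mc D(\mc L_k)$ and $\mc L_k h = Lh$. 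Finally, for $S_k(\tau)=S(\tau)|_{X^k}$, take $f\in\mc D(L)=C^\infty_{c,\mathrm{rad}}(\R^5)$. Then $\tau\mapsto S_k(\tau)f$ is a classical solution in $X^k$ of $u'=\mc L_k u$, $u(0)=f$, and via the embedding $X^k\hookrightarrow\mc H$ (Lemma \ref{Le:Embedding_X_H}) it is also a classical solution in $\mc H$ of $u'=\mc L u$, $u(0)=f$; uniqueness of solutions to this Cauchy problem in $\mc H$ forces $S_k(\tau)f=S(\tau)f$. Density of $\mc D(L)$ in $X^k$ and continuity of both semigroup families in their respective norms then extends this identity to all of $X^k$.
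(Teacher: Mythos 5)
Your overall strategy mirrors the paper exactly: Lumer--Phillips on $X^k$ via (a) a dissipativity estimate, (b) a range condition proved by reducing $(\lambda-L)f=g$ to a radial ODE and constructing a global decaying solution, (c) cutoff approximation for $\mc C\subset\mc D(\mc L_k)$, and (d) identification of the two semigroups through the embedding $X^k\hookrightarrow\mc H$ (the paper cites an abstract restriction lemma from \cite{Glo22a}; you rederive it by the classical-solution uniqueness argument in $\mc H$, which is essentially what that lemma encodes).

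There is, however, a genuine error in your justification of step (a). You assert that $L'$ is a \emph{bounded operator} on $X^k$ on the grounds that $\phi,\Lambda\phi\in X^k$ and $X^k$ is an algebra, so $\mathrm{Re}\langle L'f,f\rangle_{X^k}\leq C\|f\|_{X^k}^2$ follows by Cauchy--Schwarz. This is false: $L'f=2\Lambda(\phi f)+12\phi f$ contains the first-order term $2\phi\,\Lambda f$, so $L'$ loses one derivative; Lemma~\ref{Lem:Lambda(fg)_est} only gives $\|\Lambda(\phi f)\|_{X^{k-1}}\lesssim\|f\|_{X^k}$, and $D^k(\phi\Lambda f)$ involves $D^{k+1}f$. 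The sesquilinear form $\mathrm{Re}\langle L'f,f\rangle_{X^k}$ \emph{is} controlled by $\|f\|_{X^k}^2$, but proving it requires integration by parts to shift the extra derivative onto $\phi$; the paper does this explicitly via the identity $\mathrm{Re}\langle\phi\Lambda D^kf,D^kf\rangle_{L^2}=-\tfrac52\langle\phi D^kf,D^kf\rangle_{L^2}-\tfrac12\langle(\Lambda\phi)D^kf,D^kf\rangle_{L^2}$ and the Leibniz-expansion estimates around Eq.~\eqref{Eq:Diss_Est_Pert}. Your shortcut skips this and would not compile into a proof as written.

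One further remark: for the purposes of this proposition alone, a crude constant $\bar\omega_k$ in the dissipativity estimate suffices, but the paper deliberately proves the refined bound \eqref{Inequ:DissEst} that splits the right-hand side into a negative $X^k$-piece plus graph norms of $(1-\mc L)^{j/2}$. That refinement is precisely what feeds into the proof of the decay estimate in Theorem~\ref{Thm:DecayX}, so it is not just bookkeeping. If you proved only the soft bound here you would have to redo the computation later.
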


\begin{proof}
We prove the first part of the statement by an application of the Lumer-Phillips theorem. For this, we show that 
\begin{equation}\label{Inequ:DissEst0}
\Re \langle L f,f\rangle_{X^k} \leq \bar{\omega}_k \| f \|^2_{X^k}
\end{equation}
for some $\bar{\omega}_k >0$ and all $f \in \mc D(L)$. In fact, we prove  a more general estimate, which will be instrumental in proving Theorem \ref{Thm:DecayX} later on.  More precisely, we show that for $R \geq 1$, there are constants $C_k, C_{R,k} > 0$ such that for all $f \in C^{\infty}_{c,\mathrm{rad}}(\R^5)$,
\begin{align}\label{Inequ:DissEst}
\Re \langle L f, f \rangle _{X^k} \leq (- \tfrac{1}{4} + \tfrac{C_k}{R^2} )\|f \|^2_{X^k} + C_{R,k} \sum_{j=0}^{k} \|f \|^2_{\mc G((1- \mc L)^{j/2})}.
\end{align}
An application of Lemma \ref{Le:Graphnorm_Xk} to Eq.~\eqref{Inequ:DissEst} immediately implies Eq.~\eqref{Inequ:DissEst0}. 
Eq.~\eqref{Inequ:DissEst} will be proved in several steps. First, 
recall that $L = L_0  + L'$. By partial integration,
	\begin{align}\label{Eq:Laplacian}
	\begin{split}
		\langle D^k \Delta f, D^k f \rangle_{L^2(\R^5)} & =\langle D^{k+2} f, D^k f\rangle_{L^2(\R^5)} = -\|D^{k+1} f\|^2_{L^2(\R^5)}.
\end{split}
	\end{align}
Based on the identity
\begin{equation*}
	\Re \langle \Lambda f,f \rangle_{L^2(\R^5)} =  -\tfrac{5}{2} \| f\|^2_{L^2(\R^5)}, 
\end{equation*}
which easily follows from integration by parts and the commutator relation Eq.~\eqref{Comm_DLambda}, we get
\begin{equation}\label{Eq:Lambda_identity}
	\Re \langle D^{k} \Lambda f,D^{k} f \rangle_{L^2(\R^5)} = (k -\tfrac{5}{2}) \|D^{k} f \|^2_{L^2(\R^5)}.
\end{equation}
Consequently, 
\begin{align}\label{Eq:Diss_Est_L0}
\Re \langle L_0  f, f \rangle _{X^k} \leq  - \tfrac{1}{4} \|f \|^2_{X^k} 
\end{align}
and thus 
\begin{align*}
\Re \langle L f, f \rangle _{X^k}  = \Re \langle (L_0 + L') f, f \rangle _{X^k} \leq  - \tfrac{1}{4} \|f \|^2_{X^k}  + \Re \langle L' f, f \rangle _{X^k}.
\end{align*}
To obtain  Eq.~\eqref{Inequ:DissEst0}, one can estimate the part containing $L'$ in $X^k$ in a straightforward manner. However, for the refined bound \eqref{Inequ:DissEst}, the argument is more involved.
 We write
\begin{align}\label{Eq:Diss_Est_Pert}
\Re \langle L' f, f \rangle _{X^k} = \Re \langle V f, f \rangle _{X^k} + 2 \Re \langle \phi \Lambda f, f \rangle _{X^k}.
\end{align} with
\[ V(x) := 2 x \cdot \nabla \phi(x) + 12 \phi(x). \]
 Note that $V \in C^{\infty}_{\mathrm{rad}}(\R^5)$ and the properties of $\phi$ imply that 
\begin{align}\label{Est:Decay_V}
|\partial^{\alpha} V(x)| \lesssim \langle x \rangle^{-2-|\alpha|},
\end{align} 
for $\alpha \in \N_0^5$. First, we prove that 
\begin{align}\label{Est1}
\Re \langle D^k (V f), D^k f \rangle _{L^2(\R^5)} \leq C_{R} \sum_{j=0}^{k} \| f \|^2_{\mc G((1-\mc L)^{j/2})} +
\tfrac{C}{R^2}  \|f \|^2_{X^k}
\end{align}
for suitable constants $C_{R} , C  >0$. 
For this, we use the fact that
\begin{align*}
\Re \langle D^k (V f), D^k f  \rangle _{L^2(\R^5)}  \lesssim \sum_{|\tilde{\alpha}|=|\alpha| = k }  | \Re \langle \partial^{\tilde{\alpha}} (V f),  \partial^{\alpha} f \rangle _{L^2(\R^5)}|,
\end{align*}
for all $f \in C^{\infty}_{c,\mathrm{rad}}(\R^5)$. 
Hence, we can apply the standard Leibniz rule to estimate products. More precisely, for $\alpha,\beta,\gamma \in \N_0^5$ for which $|\alpha|=|\beta+\gamma| = k$, we estimate
\begin{align*}
|\langle \partial^{\beta} V \partial^{\gamma} f, \partial^{\alpha}  f \rangle_{L^2(\R^5)}| \leq  \||\partial^{\beta} V|^{\frac{|\beta|+1}{|\beta|+2}} \partial^{\gamma} f\|^2_{L^2(\R^5)} + 
\||\partial^{\beta} V|^{\frac{1}{|\beta|+2}} \partial^{\alpha} f \|^2_{L^2(\R^5)}.
\end{align*}
By Eq.~\eqref{Est:Decay_V} and Lemmas \ref{Le:BoundedDom_Norm} and \ref{Le:Interpolation} the last term can be bounded by
\begin{align*}
\||\partial^{\beta}   V|^{\frac{1}{|\beta|+2}} \partial^{\alpha} f \|^2_{L^2(\R^5)}   &\leq C   \| \partial^{\alpha} f \|^2_{L^2(\B^5_R)} + \tfrac{C}{R^2}  \| \partial^{\alpha} f \|^2_{L^2(\R^5 \setminus \B^5_R)} \\
&  \leq C_{R} \sum_{j=0}^{k} \| f \|^2_{\mc G((1-\mc L)^{j/2})} +
\tfrac{C}{R^2}  \|f \|^2_{X^k}.
\end{align*}
Similarly,  we have that
\begin{align*}
\||\partial^{\beta} V|^{\frac{|\beta|+1}{|\beta|+2}} \partial^{\gamma} f\|^2_{L^2(\R^5)} \leq C_{R} \sum_{j=0}^{|\gamma|} \| f \|_{\mc G((1-\mc L)^{j/2})} + \tfrac{C}{R^2}   \||\cdot|^{-|\beta|} \partial^{\gamma} f \|^2_{L^2(\R^5 \setminus \B^5_R)}.
\end{align*}
For $\gamma = 0$, we estimate the last term by Hardy's inequality,
\[  \||\cdot|^{-k}  f \|_{L^2(\R^5 \setminus \B^5_R)} \leq 
 \||\cdot|^{-1}  f \|_{L^2(\R^5 \setminus \B^5_R) } \leq  \||\cdot|^{-1}  f \|_{L^2(\R^5) }  \lesssim  \| D f\|_{L^2(\R^5)} \lesssim  \|f \|_{X^k}. \]
For $\gamma \neq 0$, by interpolation and equivalence of norms, see Lemma ~\eqref{Le:Interpolation}, we obtain
\[ 
\||\cdot|^{-|\beta|} \partial^{\gamma} f \|_{L^2(\R^5 \setminus \B^5_R)} \leq \| \partial^{\gamma} f \|_{L^2(\R^5 \setminus \B^5_R)} \lesssim    \|f \|_{X^k}, 
\]
which implies Eq.~\eqref{Est1}.
To estimate the second term in Eq.~\eqref{Eq:Diss_Est_Pert}, we use the following relation
\begin{align}\label{Eq:Re_est}
	\Re \langle D^k (\phi \Lambda f), D^k f  \rangle _{L^2(\R^5)}  \lesssim  | \Re \langle  \phi \Lambda D^kf, D^k f \rangle _{L^2(\R^5)}|  
	 + \sum_{\substack{1 \leq |\beta|\leq k \\ |\alpha| = k}}   | \langle \varphi_\beta \partial^\beta f,  \partial^{\alpha} f \rangle _{L^2(\R^5)}|
\end{align}
with certain smooth functions $|\varphi_{\beta}(x)| \lesssim \langle  x \rangle^{-2}$.
Note that by partial integration we have the following identity 
\begin{equation*}
\Re \, \langle \phi \Lambda D^kf,D^kf \rangle_{L^2(\R^5)} = -\tfrac{5}{2}\langle \phi D^kf,D^kf\rangle_{L^2(\R^5)} - \tfrac{1}{2}\langle (\Lambda\phi) D^kf,D^kf \rangle_{L^2(\R^5)}.
\end{equation*}
Therefore, the first term in \eqref{Eq:Re_est} can be estimated
\begin{align*}
| \Re \langle  \phi \Lambda D^k f,   D^k f \rangle _{L^2(\R^5)}| \lesssim \| \langle \cdot \rangle^{-1} D^k f \|^2_{L^2(\R^5)}.
\end{align*}
For the second term we have
\begin{align*}
 |  \langle  \varphi_{\beta}  \partial^{\beta} f, \partial^{\alpha}  f \rangle _{L^2(\R^5)} |    \lesssim  \| \langle \cdot \rangle^{-1}  \partial^{\beta} f \|^2_{L^2(\R^5)} +   \|\langle \cdot \rangle^{-1}   \partial^{\alpha}  f  \|^2_{L^2(\R^5)}.  
\end{align*}
Based on this, similarly to above we infer that 
\begin{align}\label{Est2}
\Re \langle D^k (\phi \Lambda f), D^k f  \rangle _{L^2(\R^5)} \leq C_R \sum_{j=0}^{k} \| f \|_{\mc G((1-\mc L)^{j/2})} +
\tfrac{C}{R^2}  \|f \|^2_{X^k},
\end{align}
for suitably chosen $C_R, C >0$. Using the same arguments, we arrive at \eqref{Est1} and \eqref{Est2} for  $D$ instead of $D^k$, and we hence get \eqref{Inequ:DissEst}, and thereby \eqref{Inequ:DissEst0} as well. 
From this, we infer that the operator $L$ is closable (see, e.g., \cite{EngNag00}, p.~82, Proposition 3.14-(iv)), and that 
the closure $\mc L_k  : \mc D(\mc L_k) \subset X^k \to X^k$, satisfies
\begin{equation}\label{Inequ:DissEst1}
\Re \langle \mc  L_k f,f\rangle_{X^k} \leq \bar{\omega}_k \| f \|^2_{X^k}
\end{equation}
for all $f \in \mc D(\mc L_k)$ and some suitable $\bar{\omega}_k > 0$.

\smallskip

Next, we prove that $\mc C \subset \mc D(\mc L_k)$ by showing that for  $f \in \mc C$ there is a sequence $(f_n) \subset  C^{\infty}_{c,\mathrm{rad}}(\R^5)$, such that $f_n \to f$  and $L f_n \to v$ in $X^k$. Then, by definition of the closure, $f \in \mc D(\mc L_k)$ and $\mc L_k f = v$. We set $f_n =  \chi(\cdot/n) f$, where  $\chi$ a  smooth, radial cut-off function equal to one for $|x| \leq 1$ and zero for $|x| \geq 2$. It is easy to see that $(f_n) \subset C^{\infty}_{c,\mathrm{rad}}(\R^5)$ converges to $f$ in  $X^k$. More precisely, by exploiting the decay of $f$,
\begin{align*}
\| f_n - f \|_{L^{\infty}(\R^5)} =  \| f_n - f \|_{L^{\infty}(\R^5\setminus \B^5_n)} \lesssim  \|f \|_{L^{\infty}(\R^5\setminus \B^5_n)} \lesssim n^{-2},
\end{align*}
we have $f_n \to f$ in $L^{\infty}(\R^5)$. Furthermore,  for $m \leq n$, $f_n - f_m = 0$ on $\B^5_m$ and thus 
\[ \| D(\chi_n f)  - D(\chi_m f)\|_{L^2(\R^5)} = \| D(\chi_n f)  - D(\chi_m f)\|_{L^2(\R^5 \setminus \B^5_m)}. \]
We have 
\begin{align*}
\| D(\chi_n f) \|_{L^2(\R^5\setminus \B^5_m)} \lesssim \||\cdot|^{-2} D\chi_n \|_{L^2(\R^5\setminus \B^5_m)} + \| |\cdot|^{-3} \chi_n \|_{L^2(\R^5\setminus \B^5_m)} \lesssim n^{-\frac12},
\end{align*}
and similarly  $\| D^{k}(\chi_n f) \|_{L^2(\R^5\setminus \B^5_m)} \lesssim n^{-\frac12}$. Thus,
 $\| f_n - f_m \|_{X^k} \lesssim  n^{-\frac12} +  m^{-\frac12}$
and $(f_n)$ converges in $X^k$ to some limiting function which must be equal to $f$ by the $L^{\infty}-$embedding and the uniqueness of limits.

Now, $\Lambda f \in \mc C$ for $f \in \mc C$. Since $\Lambda f_n = f \Lambda\chi_n +  \chi_n \Lambda f$ and 
\begin{align*}
\| \Lambda f_n -  \Lambda f \|_{L^{\infty}(\R^5)} \lesssim  \|\chi_n \Lambda f - \Lambda f \|_{L^{\infty}(\R^5\setminus \B_n^5)} + \|f \Lambda \chi_n \| _{L^{\infty}(\R^5\setminus \B_n^5)} \lesssim n^{-2},
\end{align*}
we have $\Lambda f_n \in  C^{\infty}_{c,\mathrm{rad	}}(\R^5) \to \Lambda f$ in $L^{\infty}(\R^5)$. By similar considerations as above one finds that $(\Lambda f_n)$ is Cauchy in $X^k$ and thus $\Lambda f_n \to \Lambda f$ in $X^k$ for $n \to \infty$. 
Now, 
\begin{align}\label{Eq:Op_Seq}
 L f_n = \Delta f_n  - \frac12 \Lambda f_n - f_n + 2 \Lambda  (\phi f_n) + 12 \phi f_n.
\end{align}
Arguments as above and the Banach algebra property of $X^k$ imply that $(L f_n)$ converges in $X^k$ to some limiting function $v \in X^k$. By Sobolev embedding, $L f_n \to v$ in $L^{\infty}(\R^5)$ and by convergence of the individual terms,
$v = \Delta f  - \frac12 \Lambda f - f + 2 \Lambda  (\phi f) + 12 \phi f.$ This shows in particular, that $\mc L_k$ acts as a classical differential operator on $\mc C$. 

\smallskip

For the invocation of the Lumer-Phillips theorem, it is left to prove the density of the range of $\lambda_k - \mc L_k$ for some $\lambda_k > \bar{\omega}_k$. This crucial property is established by an ODE argument, the proof of which is rather technical and therefore provided in Appendix \ref{Sec:App_ODE}. More precisely, let $f \in C^{\infty}_{c,\mathrm{rad	}}(\R^5)$ such that $f = \tilde f (|\cdot|)$. By Lemma \ref{Ap:ODE}, there exists $\la > \bar{\omega}_k$ such that the ODE
\begin{align*}
 \tilde  u''(\rho) + \left(\frac{4}{\rho} - \frac{1}{2} \rho  + 2 \rho \tilde  \phi(\rho) \right ) \tilde  u'(\rho) +  \left(  2\rho \tilde  \phi'(\rho) + 12 \tilde  \phi(\rho) - (\lambda_k + 1) \right) \tilde  u(\rho) =  - \tilde  f(\rho),
\end{align*}
with $\phi = \tilde \phi(|\cdot|)$ has a solution $\tilde u \in C^1[0,\infty) \cap C^{\infty}(0,\infty)$ satisfying $\tilde u'(0) = 0$ as well as  $\tilde u^{(j)}(\rho) = \mc O(\rho^{-3-j})$ for $j \in \N_0$ as $\rho \to \infty$. By setting $u := \tilde u(|\cdot|)$, we obtain a classical solution to the equation 
\begin{equation}\label{Eq:Elliptic}
	(\lambda_k - L) u = f
\end{equation} on $\R^5\setminus \{0\}$. Since $u$ belongs to $H^1(\R^5)$, it solves \eqref{Eq:Elliptic} weakly on $\R^5$, and by elliptic regularity we infer that $u \in C^{\infty}_{\mathrm{rad}}(\R^5)$. The decay of $\tilde u$ at infinity implies that $u \in \mc C$. Hence, $u \in \mc D( \mc L_k)$ which implies the claim. 

An application of the Lumer-Phillips Theorem  now proves that $(\mc L_k, \mc D(\mc L_k))$ generates a  strongly continuous semigroup $(S_k(\tau))_{\tau \geq 0}$ on $X^k$. In view of the embedding $X^k \hookrightarrow \mc H$  and the fact that $C^{\infty}_{c,\mathrm{rad}}(\R^5)$ is a core for $\mc L$ and $\mc L_{k}$ for any $k$, an application of Lemma C.1 in \cite{Glo22a} proves the claimed restriction properties. 
\end{proof}

In view of the restriction properties stated in Proposition \ref{Prop:Semigroup_Sk}, we can safely omit the index $k$ in the notation of the semigroup.

Before turning to the proof of Theorem \ref{Thm:DecayX}, we state a result for the free evolution, which follows in a straightforward manner analogous to the proof of Proposition \ref{Prop:Semigroup_Sk}, using in particular Eq.~\eqref{Eq:Diss_Est_L0} and setting $L'=0$ in the subsequent arguments.

\begin{lemma}\label{Lem:S_0_decay}
Let $k \in \N$, $k \geq 3$. The operator $L_0: \mc D(L_0) \subset X^k \to X^k$ is closable and its closure $(\mc L_{0,k}, \mc D(\mc L_{0,k}))$ generates a strongly continuous semigroup on $X^k$, which coincides with the restriction of the $S_0(\tau)$ to $X^k$ for $\tau \geq 0$. Furthermore, 
\[ \|S_0(\tau) f \|_{X^k} \leq e^{-\frac{\tau}{4}} \| f \|_{X^k} \]
for all $f \in X^k$ and $\tau \geq 0$.
\end{lemma}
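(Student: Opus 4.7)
The plan is to repeat the Lumer--Phillips argument from Proposition \ref{Prop:Semigroup_Sk}, but with $L'=0$ throughout. Every ingredient simplifies: there is no potential $V$, no weighted transport piece $\phi\Lambda$, and the approximation-by-cutoff argument no longer needs to track $L'f_n$.

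First, the dissipativity estimate is already available from the earlier proof as Eq.~\eqref{Eq:Diss_Est_L0}, namely
\[
\Re\langle L_0 f, f\rangle_{X^k} \leq -\tfrac{1}{4}\|f\|^2_{X^k} \qquad \text{for all } f \in C^{\infty}_{c,\mathrm{rad}}(\R^5),
\]
so $L_0+\tfrac14$ is dissipative on $X^k$. This forces $L_0$ to be closable (by \cite{EngNag00}, Proposition 3.14-(iv)), and the closure $(\mc L_{0,k}, \mc D(\mc L_{0,k}))$ inherits the same bound.

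Next, I would establish density of the range of $\lambda - \mc L_{0,k}$ for some $\lambda > -\tfrac14$. Lemma \ref{Ap:ODE}, specialized to $\tilde\phi \equiv 0$, delivers for any $f=\tilde f(|\cdot|) \in C^{\infty}_{c,\mathrm{rad}}(\R^5)$ a radial $\tilde u \in C^1[0,\infty) \cap C^{\infty}(0,\infty)$ solving
\[
\tilde u''(\rho) + \Bigl(\tfrac{4}{\rho} - \tfrac{\rho}{2}\Bigr)\tilde u'(\rho) - (\lambda + 1)\tilde u(\rho) = -\tilde f(\rho),
\]
with $\tilde u'(0)=0$ and $\tilde u^{(j)}(\rho) = \mc O(\rho^{-3-j})$, so $u = \tilde u(|\cdot|) \in \mc C \subset X^k$ by Lemma \ref{Lemma_Def_C} and $(\lambda-L_0)u = f$ classically on $\R^5$. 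The cutoff approximation used in Proposition \ref{Prop:Semigroup_Sk} (in fact easier here, as only the $L_0$ part must be controlled) places $u$ in $\mc D(\mc L_{0,k})$, so $C^{\infty}_{c,\mathrm{rad}}(\R^5) \subset \mathrm{Ran}(\lambda - \mc L_{0,k})$ and the range is dense in $X^k$.

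With dissipativity of $\mc L_{0,k} + \tfrac14$ and dense range of $\lambda - \mc L_{0,k}$ in hand, the Lumer--Phillips theorem produces a strongly continuous semigroup $(S_{0,k}(\tau))_{\tau\geq 0}$ on $X^k$ for which $e^{-\tau/4}S_{0,k}(\tau)$ is contractive, yielding the stated bound $\|S_{0,k}(\tau)f\|_{X^k} \leq e^{-\tau/4}\|f\|_{X^k}$. Finally, the identification $S_{0,k}(\tau)=S_0(\tau)|_{X^k}$ follows verbatim from the closing paragraph of Proposition \ref{Prop:Semigroup_Sk}, using the embedding $X^k \hookrightarrow \mc H_0$ (Lemma \ref{Le:Embedding_X_H}) and Lemma C.1 of \cite{Glo22a}, since $C^{\infty}_{c,\mathrm{rad}}(\R^5)$ is a core for both $\mc L_0$ and $\mc L_{0,k}$. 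There is no genuine obstacle in this parallel proof; the mildly delicate step is the range condition, which could alternatively be derived from the explicit heat-kernel formula for $S_0(\tau)$ by realizing the resolvent as a Laplace transform of $S_0(\tau)$.
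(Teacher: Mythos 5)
Your proof is correct and mirrors the paper's own proof, which is given as a one-line remark: rerun the Lumer--Phillips argument of Proposition \ref{Prop:Semigroup_Sk} with $L'=0$, using the dissipativity estimate \eqref{Eq:Diss_Est_L0}, the (simplified) ODE resolvent construction of Lemma \ref{Ap:ODE}, and the restriction argument via the embedding into $\mc H_0$ together with \cite{Glo22a}, Lemma C.1. The only cosmetic caveat is that Lemma \ref{Ap:ODE} is stated for the specific $\tilde\phi$ rather than for $\tilde\phi\equiv 0$, so strictly speaking one re-runs (an easier instance of) its proof rather than literally citing it; your parenthetical note about the explicit heat-kernel alternative is a valid and arguably cleaner shortcut.
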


\subsection{Proof of Theorem \ref{Thm:DecayX}}

First, we show that the operator $\mc P$ from \eqref{Def:P} induces a non-orthogonal rank-one projection on $X^k$. To indicate the dependence on $k$, we write
\begin{equation}\label{Def:P_k}
	\mc P_{X^{k}} f := \langle f, g \rangle_{\mc H} ~ g
\end{equation}
for $f \in X^k$. By its decay properties, the function $g$ is an element of $\mc C  \subset X^k$ for any $k \in \N$. In view of the embedding $X^k \hookrightarrow \mc H$, the inner product makes sense for $f \in X^k$ and by definition $\mc P_{X^{k}}^2 = \mc P_{X^{k}}$. The fact that the projection commutes with  $S_{k}(\tau)$ follows from the respective properties on $\mc H$. 

Let $f \in C^\infty_{c,\mathrm{rad}}(\R^5)$. By Proposition \ref{Prop:Semigroup_Sk}, $\tilde{f}:=(1-\mc P_{X^{k}})f \in \mc C \subset \mc D(\mc L_k)$. Using Eq.~\eqref{Inequ:DissEst}, Lemma \ref{Le:Graph_L} and Lemma \ref{Le:Graphnorm_Xk}, we infer that for $R \geq 1$ sufficiently large,
\begin{align*}
\tfrac{1}{2}  \tfrac{d}{d\tau} \|S(\tau)\tilde{f}  \|^2_{X^k}  &   = ( \partial_{\tau} S(\tau) \tilde{f}|S(\tau) \tilde{f})_{X^k}   = ( \mc L_k S(\tau)  \tilde{f}|S(\tau) \tilde{f})_{X^k}  \\
& \leq (-\tfrac{1}{4}  + \tfrac{C_k}{R^2}) \|S(\tau) \tilde{f}\|^2_{X^k}    + 
C_{R,k} \sum_{j=0}^{k} \|S(\tau)\tilde{f} \|^2_{\mc G((1- \mc L)^{j/2})}  \\
& \leq  -\tfrac{1}{8} \|S(\tau) \tilde{f}\|^2_{X^k}+ C_{k} e^{-2 \omega_0 \tau}  \sum_{j=0}^{k} \| \tilde{f} \|^2_{\mc G((1- \mc L)^{j/2})} \\
&  \leq -\tfrac{1}{8} \|S(\tau) \tilde{f}\|^2_{X^k} +  C_{k} e^{-2 \omega_0 \tau}  \|\tilde{f} \|_{X^k}^2 \leq - 2 c  \|S(\tau) \tilde{f}\|^2_{X^k}  + C_k e^{-4 c \tau}  \|\tilde{f} \|^2_{X^k},
\end{align*}
for $c = \frac{1}{2} \min \{\omega_0, \frac{1}{8} \}$. Hence, 
\begin{align*}
\tfrac{1}{2} \tfrac{d}{d\tau}  \left[ e^{4 c \tau}   \|S(\tau) \tilde{f}\|^2_{X^k}  \right] \leq C_k \|\tilde{f} \|^2_{X^k}
\end{align*}
and integration yields
\[   \|S(\tau) \tilde{f}\|^2_{X^k} \leq (1 + 2 C_k \tau ) e^{-4 c  \tau}  \|\tilde{f} \|^2_{X^k}  \lesssim e^{-2 \omega_k \tau}   \|\tilde{f} \|^2_{X^k} ,\]
for some suitably chosen $\omega_k  > 0$ and all $\tau \geq 0$. For $f \in X^k$, the same bound follows by density. Again, for simplicity, we write $\mc P f= \mc P_{X^k} f$ for $f \in X^k$.


\section{Nonlinear estimates}\label{Sec:Nonlin}
\noindent Now we turn to the analysis of the full nonlinear equation \eqref{Eq:Perturbed}. In this section, we establish for the operator $N$ a series of estimates which will be necessary later on for constructing solutions to \eqref{Eq:Perturbed}.
Recall that for $k \geq 3$ the space $X^k$ embeds into $C^{k-3}_{\text{rad}}(\R^5)$. Therefore, multiplication and taking derivatives of order at most $k-3$ is well defined for functions in $X^k$. With this in mind, we formulate and prove the following important lemma.

\begin{lemma}\label{Lem:Lambda(fg)_est}
	Let $k \in \N, k \geq 4$. Given $f,g \in X^k$ we have that $\Lambda(fg) \in X^{k-1}$. Furthermore,
	\begin{equation}\label{Eq:Lambda_est}
			\| \Lambda(fg) \|_{X^{k-1}} \lesssim \| f\|_{X^{k}} \| g \|_{X^{k}},
\end{equation} 
for all $f,g \in X^k$.
\end{lemma}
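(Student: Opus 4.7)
The plan is to estimate $\Lambda(fg)$ in $X^{k-1}$ directly, keeping the product $fg$ together; one cannot proceed by bounding $\Lambda f$ or $\Lambda g$ separately in $X^{k-1}$, since, for instance, $\Lambda \phi(x) = -4|x|^2/(2+|x|^2)^2$ does not decay at infinity and therefore $\Lambda \phi \notin L^2(\R^5)$. Applying the commutator identity $\partial^\alpha \Lambda = \Lambda \partial^\alpha + |\alpha|\partial^\alpha$ from \eqref{Eq:Comm_Lambda_2} yields the pointwise bound
\[
|\partial^\alpha \Lambda(fg)(x)| \leq |x|\,|\nabla^{|\alpha|+1}(fg)(x)| + |\alpha|\,|\partial^\alpha(fg)(x)|.
\]
Taking $L^2$-norms for $|\alpha| \in \{1, k-1\}$, the unweighted contributions are controlled by $\|fg\|_{X^{k-1}} \lesssim \|f\|_{X^k}\|g\|_{X^k}$ via the Banach algebra structure of $X^{k-1}$ (Lemma~\ref{Le:Functions_in_Xk}; valid since $k-1 \geq 3$). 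The problem therefore reduces to proving the weighted estimate
\[
\||\cdot|\,\nabla^j(fg)\|_{L^2(\R^5)} \lesssim \|f\|_{X^k}\|g\|_{X^k}, \qquad j \in \{2, k\}.
\]

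For this, I would expand by the classical Leibniz rule $\nabla^j(fg) = \sum_{\ell=0}^{j} c_\ell\,\nabla^\ell f \otimes \nabla^{j-\ell}g$ and split $\R^5 = \B^5_1 \cup (\R^5 \setminus \B^5_1)$. On the unit ball the weight $|\cdot|$ is bounded by $1$ and the contribution reduces to $\|fg\|_{\dot H^j} \lesssim \|f\|_{X^k}\|g\|_{X^k}$ via the Banach algebra. On the exterior the key tool is the radial Strauss inequality: for radial $u \in \dot H^1(\R^5)$ with $u(r) \to 0$ as $r \to \infty$, one has $|u(x)| \leq C |x|^{-3/2}\|u\|_{\dot H^1}$, as follows from Cauchy-Schwarz in $d=5$. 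Applied to the radial function $x \mapsto |\nabla^\ell f(x)|$ (whose $\dot H^1(\R^5)$-norm I will bound by $\|f\|_{X^k}$ for $0 \leq \ell \leq k-1$), and using $|x|\cdot|x|^{-3/2} = |x|^{-1/2} \leq 1$ on the exterior, this yields the crucial weighted-$L^\infty$ bound
\[
\||\cdot|\,\nabla^\ell f\|_{L^\infty(|x|\geq 1)} \lesssim \|f\|_{X^k}, \qquad 0 \leq \ell \leq k-1.
\]
Arranging WLOG $\ell \leq j/2 \leq k-1$, placing $|\cdot|\,\nabla^\ell f$ in the weighted $L^\infty$ and $\nabla^{j-\ell}g$ in $L^2$, and using the interpolation $\|\nabla^m g\|_{L^2} \lesssim \|g\|_{X^k}$ from Lemma~\ref{Le:Interpolation}, Hölder gives
\[
\||\cdot|\,\nabla^\ell f \otimes \nabla^{j-\ell}g\|_{L^2(|x|\geq 1)} \leq \||\cdot|\,\nabla^\ell f\|_{L^\infty(|x|\geq 1)}\,\|\nabla^{j-\ell}g\|_{L^2} \lesssim \|f\|_{X^k}\|g\|_{X^k}.
\]
The extreme case $\ell = j = k$ is handled symmetrically, putting $|\cdot|\,g$ into the weighted $L^\infty$ via the same inequality applied to $g$ itself.

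The main obstacle is the weighted $L^\infty$ bound for $|\cdot|\,\nabla^\ell f$ uniformly in $0 \leq \ell \leq k-1$. For radial $f$ this requires controlling the $\dot H^1(\R^5)$-norm of $x \mapsto |\nabla^\ell f(x)|$, i.e., essentially $\int_0^\infty |\tilde f^{(\ell+1)}(r)|^2 r^4\,dr$, in terms of $\|f\|_{X^k}^2$. This reduces to Hardy-type identities relating the unweighted radial derivatives $\tilde f^{(i)}$ to the radial $\dot H^{i}$-norms of $f$ plus lower-order corrections (in the spirit of those invoked in the proof of Proposition~\ref{Prop:Semigroup_Sk}), combined with the interpolation $\|f\|_{\dot H^{\ell+1}} \lesssim \|f\|_{X^k}$ for $1 \leq \ell+1 \leq k$. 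The decay $\tilde f^{(\ell)}(r) \to 0$ required for Strauss is automatic on the dense subspace $\mc C$ from Lemma~\ref{Lemma_Def_C} and extends to $X^k$ by closure.
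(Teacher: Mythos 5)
Your argument is correct and follows essentially the same route as the paper: a Leibniz expansion, a splitting into the unit ball and its exterior, and the radial weighted $L^\infty$ (Strauss-type) bound $\||\cdot|^{3/2}\partial^\alpha u\|_{L^\infty(\R^5)}\lesssim\|u\|_{\dot H^{|\alpha|+1}(\R^5)}$ combined with interpolation and the Banach algebra property. The only differences are cosmetic: the paper cites this weighted inequality from \cite{Glo22a} (Proposition B.1, for general $s\in(\tfrac12,\tfrac52)$) rather than rederiving the $s=1$ case, it Leibniz-expands $D^{k-1}\Lambda(fg)$ directly instead of commuting $\Lambda$ past $\partial^\alpha$ first, and on the unit ball it pairs the Strauss bound with Hardy's inequality where you simply drop the weight and invoke the algebra property.
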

\begin{proof}
  In this proof we crucially rely on a recently established inequality for the weighted $L^\infty$-norms of derivatives of radial Sobolev functions; see \cite{Glo22a}, Proposition B.1. For convenience, we copy here the version of this result in five dimensions. Namely, given $s \in (\frac12,\frac52)$ and $\alpha \in \N_0^5$, we have that
 \begin{equation}\label{Eq:Strauss}
 	\| |\cdot|^{\frac{5}{2}-s} \partial^\alpha u\|_{L^\infty(\R^5)} \lesssim \| u \|_{\dot{H}^{|\alpha|+s}(\R^5)} 
 \end{equation}
 for all $u \in C^\infty_{c,\text{rad}}(\R^5)$.
 Now we turn to proving \eqref{Eq:Lambda_est}. Due to the $W^{1,\infty}$-embedding of $X^k$ for $k \geq 4$ and the fact that $(f,g) \mapsto \Lambda(fg)$ is bilinear, it is enough to show \eqref{Eq:Lambda_est} for $f,g \in C^\infty_{c,\text{rad}}(\R^5)$.
 To estimate the $\dot{H}^{k-1}$ part, we do the following. If $k$ is odd, then
	\begin{align*}
		\|  D^{k-1} \Lambda (fg) \|_{L^2(\R^5)} &= \|  \Delta^{\frac{k-1}{2}} \Lambda (fg) \|_{L^2(\R^5)} \\
		&\lesssim \sum_{|\alpha| + |\beta| = k} \| |\cdot| \partial^\alpha f \, \partial^\beta g \|_{L^2(\R^5)} + \sum_{|\alpha| + |\beta| = k-1} \| \partial^\alpha f \, \partial^\beta g \|_{L^2(\R^5)}\\
		& \lesssim \| f \|_{X_k}\| g \|_{X_k},
	\end{align*}
	where the last estimate  follows from a combination of the $L^\infty$-embedding of $X_k$, Hardy's inequality, and the inequality \eqref{Eq:Strauss}. To illustrate this, we estimate the first sum above. Without loss of generality we assume that $|\alpha| \leq |\beta|$. We then separately treat the integrals corresponding to the unit ball and its complement. For the unit ball, we first assume that $\alpha=0$. Then
	\begin{align*}
		\| |\cdot| f \, \partial^\beta g \|_{L^2(\B^5)} \leq \| f \|_{L^\infty(\R^5)} \| \partial^\beta g\|_{L^2(\R^5)} \lesssim \| f \|_{X_k}\| g \|_{X_k},
	\end{align*}
by the $L^\infty$-embedding. If $\alpha \neq 0$, we have that
	\begin{align}
	\| |\cdot| \partial^\alpha f \, \partial^\beta g \|_{L^2(\B^5)} \leq \||\cdot|^\frac{3}{2} \partial^\alpha f \|_{L^\infty(\R^5)} \| |\cdot|^{-1} \partial^\beta g\|_{L^2(\R^5)} \lesssim \| f \|_{X_k}\| g \|_{X_k},
\end{align}
by \eqref{Eq:Strauss} and Hardy's inequality. For the complement of the unit ball, we have
	 \begin{align*}
	 \| |\cdot| \partial^\alpha f \partial^\beta g \|_{L^2(\R^5\setminus \B^5)} 
		& \leq 	\| |\cdot|^{\frac{3}{2}}\partial^\alpha f \|_{L^\infty(\R^5)}  \| \partial^\beta g \|_{L^2(\R^5)} \lesssim \| f \|_{X_k}\| g \|_{X_k}
	\end{align*}
	by \eqref{Eq:Strauss} only. The second sum is estimated similarly.
	 If $k$ is even, then we have that
	\begin{equation*}
		\|  D^{k-1} \Lambda (fg) \|_{L^2(\R^5)} = \| \nabla \Delta^{\frac{k-2}{2}}\Lambda (fg)   \|_{L^2(\R^5)},
	\end{equation*}
	and the desired estimate follows similarly to the previous case. The $\dot{H}^1$ part of the norm is treated in the same fashion.
\end{proof}
	Now we establish the crucial smoothing properties of $S_0(\tau)$. 
	\begin{proposition}\label{Prop:S_0_est}
		Let $k \in \N, k \geq 4$. Then, given $f,g \in X^k$ the function
	\begin{equation}\label{Def:Map_SL}
		\tau \mapsto S_0(\tau)\Lambda (fg)
	\end{equation} 
	maps $(0,\infty)$ continuously into $X^{k+1}$. Furthermore, denoting $\beta(\tau):=\alpha(\tau)^{-\frac12}$, we have that
	\begin{align}
		\|S_0(\tau)\Lambda(fg) \|_{X^{k+1}} &\lesssim  \beta(\tau) e^{-\frac{\tau}{4}}\|f \|_{X^k}\|g \|_{X^k}, \label{Eq:S_0_Lambda_est}\\
	\|S_0(\tau)\Lambda(fg) \|_{X^{k}} &\lesssim e^{-\frac{\tau}{4}}\|f \|_{X^k}\|g \|_{X^k}, \label{S_0_Lambda_estimate_1} 
	\end{align}
for all $\tau >0$ and all $f,g \in  X^k$.
\end{proposition}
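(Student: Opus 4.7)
My strategy centres on the commutation identity
\begin{equation*}
S_0(\tau)\Lambda H \;=\; \Lambda S_0(\tau)H \;+\; 2(e^\tau-1)\Delta S_0(\tau) H, \qquad H := fg,
\end{equation*}
which reflects the underlying commutator $[L_0,\Lambda]=2\Delta$ between the free generator and the dilation operator. I would derive it either via Fourier transform, using the explicit symbol $\widehat{S_0(\tau)h}(\xi) = e^{3\tau/2}e^{-(e^\tau-1)|\xi|^2}\hat{h}(e^{\tau/2}\xi)$ together with $\widehat{\Lambda u}(\xi)=-5\hat{u}(\xi)-\Lambda_\xi\hat{u}(\xi)$, or equivalently by integrating by parts inside the Gaussian convolution $G_\tau\ast\Lambda H$, using $\Lambda G_\tau(w) = -|w|^2 G_\tau(w)/(2\alpha(\tau))$ together with $|w|^2 G_\tau = 4\alpha(\tau)^2\Delta G_\tau + 10\alpha(\tau) G_\tau$. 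Given this identity, the Banach-algebra property (Lemma~\ref{Le:Functions_in_Xk}) yields $fg\in X^k$ with $\|fg\|_{X^k}\lesssim \|f\|_{X^k}\|g\|_{X^k}$, so the proof reduces to estimating $\Lambda S_0(\tau)(fg)$ and $(e^\tau-1)\Delta S_0(\tau)(fg)$ separately in $X^{k+1}$.

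The principal tool is the Fourier-side smoothing bound
\begin{equation*}
\|D^j S_0(\tau)h\|_{L^2} \;=\; e^{(1/4-j/2)\tau}\bigl\||\eta|^j e^{-\alpha(\tau)|\eta|^2}\hat{h}(\eta)\bigr\|_{L^2_\eta},
\end{equation*}
coupled with the sharp pointwise inequality $|\eta|^{j-\ell}e^{-\alpha|\eta|^2}\lesssim \alpha(\tau)^{-(j-\ell)/2}$ valid for $j\geq\ell$. For the $\Delta$-term, $\|\Delta S_0(\tau)(fg)\|_{\dot H^{k+1}} = \|S_0(\tau)(fg)\|_{\dot H^{k+3}}$, controlled by $\alpha(\tau)^{-3/2}e^{(1/4-(k+3)/2)\tau}\|fg\|_{\dot H^k}$; multiplying by the prefactor $e^\tau-1 = e^\tau\alpha(\tau)$ yields the claimed $\alpha(\tau)^{-1/2}$ loss, and for $k\geq 4$ the residual exponential dominates $e^{-\tau/4}$. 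The $\dot H^1$-component of the $\Delta$-term is handled analogously by splitting $|\eta|^3 e^{-\alpha|\eta|^2}$ across low and high frequencies using $k\geq 3$. For the $\Lambda$-term I would combine the radial commutator $D^{k+1}\Lambda = \Lambda D^{k+1} + (k+1)D^{k+1}$ with the explicit chain rule
\begin{equation*}
\Lambda_\xi\widehat{S_0(\tau)H}(\xi) \;=\; -2(e^\tau-1)|\xi|^2\widehat{S_0(\tau)H}(\xi) \;+\; e^{3\tau/2}e^{-(e^\tau-1)|\xi|^2}(\Lambda\hat{H})(e^{\tau/2}\xi),
\end{equation*}
closing the estimate via $\Lambda\hat{H} = -5\hat{H} - \widehat{\Lambda H}$ and the Gaussian absorption $|\xi|e^{-\alpha|\xi|^2}\lesssim \alpha(\tau)^{-1/2}$, which once again produces the matching factor $\beta(\tau) = \alpha(\tau)^{-1/2}$.

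The $X^k$-estimate \eqref{S_0_Lambda_estimate_1} follows from the same decomposition carried out at the lower level of regularity; since no derivative is gained, no $\alpha(\tau)^{-1/2}$ factor arises. Continuity of $\tau\mapsto S_0(\tau)\Lambda(fg)$ as a map $(0,\infty)\to X^{k+1}$ then follows from the strong continuity of $S_0$ on $X^k$ (Lemma~\ref{Lem:S_0_decay}), applied to $S_0(\tau_0)\Lambda(fg)\in X^{k+1}$ for each fixed $\tau_0>0$, together with the uniform bounds just established.

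The main obstacle I foresee is controlling the piece $\bigl\||\xi|^{k+1}\Lambda_\xi \widehat{S_0(\tau)(fg)}\bigr\|_{L^2}$ appearing in the $\dot H^{k+1}$-estimate. A brute-force integration by parts would introduce the Fourier transform of $x(fg)$, equivalently $\nabla\widehat{fg}$, which is not controlled by $\|fg\|_{X^k}$ alone. The chain-rule identity above sidesteps this by exchanging the troublesome $\Lambda_\xi$-derivative for the algebraic multiplier $|\xi|^2$---efficiently absorbed by $e^{-\alpha|\xi|^2}$ at precisely the cost of $\alpha(\tau)^{-1/2}$---plus a residual that reduces, via $\widehat{\Lambda H}=-5\hat{H}-\Lambda_\xi\hat{H}$ and the commutation identity itself, to quantities already under control.
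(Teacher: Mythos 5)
Your commutation identity $S_0(\tau)\Lambda H = \Lambda S_0(\tau)H + 2(e^\tau-1)\Delta S_0(\tau)H$ is correct, and it is in fact equivalent to the paper's convolution identity \eqref{Eq:S_tilde}, since $\Lambda G_{\alpha(\tau)} = -2\alpha(\tau)\Delta G_{\alpha(\tau)} - 5\,G_{\alpha(\tau)}$; your treatment of the $\Delta$-term is also fine. The gap lies in the term $\Lambda S_0(\tau)H$, and it is a genuine one: your remedy for what you call the main obstacle is circular. The chain-rule identity for $\Lambda_\xi\widehat{S_0(\tau)H}$ is nothing but the Fourier-side form of the commutation identity itself, so when you insert it into $\bigl\||\xi|^{k+1}\Lambda_\xi\widehat{S_0(\tau)H}\bigr\|_{L^2}$ and then substitute $\Lambda_\eta\hat{H}=-5\hat{H}-\widehat{\Lambda H}$, the residual you describe as ``already under control'' is exactly $\bigl\||\xi|^{k+1}\widehat{S_0(\tau)\Lambda H}\bigr\|_{L^2}=\|S_0(\tau)\Lambda H\|_{\dot H^{k+1}}$ with coefficient one --- the very quantity being estimated. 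It cannot be absorbed, and unwinding the algebra your decomposition collapses to a tautology.

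The symptom is that your argument uses only the Banach-algebra bound $\|fg\|_{X^k}\lesssim\|f\|_{X^k}\|g\|_{X^k}$ and never exploits the product structure further; if it closed, it would prove $\|S_0(\tau)\Lambda u\|_{X^{k+1}}\lesssim\beta(\tau)e^{-\tau/4}\|u\|_{X^k}$ for arbitrary $u\in X^k$, which is not available: $\Lambda$ carries the spatial weight $|x|$, i.e.\ a $\xi$-derivative of $\hat{u}$ on the Fourier side, and Gaussian decay in frequency cannot purchase spatial decay. The indispensable input is Lemma \ref{Lem:Lambda(fg)_est}, $\|\Lambda(fg)\|_{X^{k-1}}\lesssim\|f\|_{X^k}\|g\|_{X^k}$, which rests on the radial weighted bound \eqref{Eq:Strauss}. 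Note moreover that even granting this lemma, a pure smoothing estimate from $X^{k-1}$ to $X^{k+1}$ gains two derivatives at cost $\alpha(\tau)^{-1}=\beta(\tau)^2$, which is non-integrable at $\tau=0$ and would break the subsequent Duhamel arguments. The paper therefore estimates $\Lambda D^{k+1}S_0(\tau)u$ on the physical side, distributing the weight $|x|$ between the Gaussian kernel and the function inside the convolution (passing to $L^1/L^2$ norms over $\R^7$), so that only one derivative lands on the kernel (cost $\beta(\tau)$) while the weighted piece on the function is controlled by $\|\Lambda u\|_{X^{k-1}}$; the product structure enters exactly once, at the end, through Lemma \ref{Lem:Lambda(fg)_est}. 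An ingredient of this kind is unavoidable; the Fourier-side manipulation alone cannot supply it.
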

\begin{proof}
	Similarly to above, by the embedding \eqref{Eq:Embed_Xk} and the underlying linearity, it is enough to show the proposition for $f,g \in C^\infty_{c,\text{rad}}(\R^5)$. First, note that for $u \in  C^\infty_{c,\text{rad}}(\R^5)$ and $v \in \mc S_{\text{rad}}(\R^5)$ the following relation holds
	\begin{equation*}
		\Lambda (u \ast v) = (\Lambda u)\ast v + u \ast (\Lambda v) + 5 \, u \ast v.
	\end{equation*}
Accordingly, we have that
\begin{align}
	S_0(\tau) \Lambda u &= e^{-\tau} [G_{\alpha(\tau)} \ast \Lambda u] (e^{-\frac{\tau}{2}}\cdot)  \nonumber \\
	&= e^{-\tau} [\Lambda (G_{\alpha(\tau)}\ast u) - (\Lambda G_{\alpha(\tau)}) \ast u - 5\, G_{\alpha(\tau)} \ast u](e^{-\frac{\tau}{2}}\cdot) \nonumber \\
	&=\Lambda S_0(\tau)u - \tilde{S}_0(\tau)u - 5 \, S_0(\tau)u, \label{Eq:S_tilde}
\end{align}
where $\tilde{S}_0(\tau)$ is given by the (scaled) convolution with the radial Schwartz function $\tilde{G}_{\alpha(\tau)}:=\Lambda G_{\alpha(\tau)}$. To prove the estimate \eqref{Eq:S_0_Lambda_est} we do the following.
First, we note that the $L^1(\R^5)$-norm of $\tilde{G}_{\alpha(\tau)}$ does not depend on $\tau$, so by Young's inequality we have that
\begin{equation*}
	\| \tilde{S}_0(\tau) u \|_{\dot{H}^1(\R^5)} \lesssim e^{-\frac{\tau}{4}}\| u \|_{\dot{H}^1(\R^5)}.
\end{equation*}
For the $\dot{H}^{k+1}$-norm, again by Young's inequality we have that
\begin{equation*}
 \| D^{k+1} \tilde{S}_0(\tau)u \|_{L^2(\R^5)} \lesssim e^{-\frac{\tau}{4}} \| D\tilde{G}_{\alpha(\tau)} \|_{L^1(\R^5)} \| D^{k} u \|_{L^2(\R^5)} \lesssim e^{-\frac{\tau}{4}} \beta(\tau) \| D^ku \|_{L^2(\R^5)}.
\end{equation*}
In summary
\begin{equation*}
		\| \tilde{S}_0(\tau) u \|_{X^{k+1}} \lesssim \beta(\tau) e^{-\frac{\tau}{4}}\| u \|_{X^k}.
\end{equation*}
In the same way we get the estimate for $S_0(\tau)u$. According to \eqref{Eq:S_tilde}, it remains to treat $\Lambda S_0(\tau)u$.
According to the commutator relation \eqref{Comm_DLambda}, to bound the $\dot{H}^1$-norm, it is enough to estimate $\Lambda D S_0(\tau)u$ in $L^2$. To that end, we have
\begin{align*}
	\| \Lambda D S_0(\tau)u \|_{L^2(\R^5)} &
	\lesssim e^{-\frac{\tau}{4}}\| |\cdot|  S_0(\tau) D(D u) \|_{L^2(\R^5)}\\
	&\lesssim e^{-\frac{\tau}{4}} \| G_{\alpha(\tau)} \|_{L^1(\R^7)} \| D(Du) \|_{L^2(\R^7)}\\
	& \lesssim e^{-\frac{\tau}{4}} \| \Lambda D u \|_{L^2(\R^5)}\\
	& \lesssim e^{-\frac{\tau}{4}}( \| D u \|_{L^2(\R^5)} + \| D\Lambda u \|_{L^2(\R^5)}),
\end{align*}
for all $\tau >0$ and $u \in C^\infty_{c,\text{rad}}(\R^5)$.
To estimate the $\dot{H}^{k+1}$-norm, it is enough to bound $\Lambda D^{k+1} S_0(\tau) u$ in $L^2$. For this, we similarly get
\begin{align*}
	\| \Lambda D^{k+1} S_0(\tau)u \|_{L^2(\R^5)} &
	\lesssim e^{-\frac{\tau}{4}}  \| D^2 G_{\alpha(\tau)} \|_{L^1(\R^7)} \|D(D^{k-1}u) \|_{L^2(\R^7)}\\
	& \lesssim e^{-\frac{\tau}{4}} \| \Lambda D^{k-1}u \|_{L^2(\R^5)}\\
	&\lesssim e^{-\frac{\tau}{4}}( \| D^{k-1} u \|_{L^2(\R^5)} + \| D^{k-1}\Lambda u \|_{L^2(\R^5)}),
\end{align*}
for all $\tau >0$ and $u \in C^\infty_{c,\text{rad}}(\R^5)$. In summary, we have that
\begin{equation*}
	\| S_0(\tau)\Lambda u \|_{X^{k+1}} \lesssim \beta(\tau) e^{-\frac{\tau}{4}} (\| u \|_{X^{k}} + \| \Lambda u \|_{X^{k-1}}).
\end{equation*}
Finally, by putting $fg$ instead of $u$, the estimate \eqref{Eq:S_0_Lambda_est} follows from the Banach algebra property of $X^k$ and Lemma \ref{Lem:Lambda(fg)_est}. According to the above estimates, the continuity of the map $\tau \mapsto S_0(\tau)\Lambda(fg):(0,\infty) \rightarrow X^{k+1}$  follows from the continuity of the kernel maps $D\tilde{G}_{\alpha(\cdot)},DG_{\alpha(\cdot)}:(0,\infty) \rightarrow L^1(\R^5)$, and $G_{\alpha(\cdot)},D^2 G_{\alpha(\cdot)}:(0,\infty) \rightarrow L^1(\R^7)$.
The estimate \eqref{S_0_Lambda_estimate_1} is obtained similarly.
\end{proof}
\begin{corollary}\label{Cor:Lambda_est}
	Given $f,g \in  X^4$ the function
	\begin{equation}\label{Def:Lambda_lambda_map}
		\tau \mapsto \Lambda [\phi \, S_0(\tau)\Lambda(fg)]
	\end{equation}
maps $(0,\infty)$ continuously into $X^4$. Furthermore, we have that
\begin{equation}\label{Eq:Lambda_S_0_Lambda_est}
	\|\Lambda [\phi \, S_0(\tau)\Lambda(fg)] \|_{X^{4}} \lesssim \beta(\tau)e^{-\frac{\tau}{4}}\|f \|_{X^4}\|g \|_{X^4}.
\end{equation}
for all $\tau>0$ and all $f,g \in  X^4$.
\end{corollary}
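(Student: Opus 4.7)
My plan is to obtain the corollary by chaining together the smoothing estimate of Proposition \ref{Prop:S_0_est}, the membership $\phi\in X^k$ for every $k\geq 3$ from Lemma \ref{Lemma_Def_C}, and the derivative estimate of Lemma \ref{Lem:Lambda(fg)_est} applied at level $k=5$. The strategy is that the smoothing property of $S_0(\tau)$ lifts $\Lambda(fg)$ into $X^5$, which is one level higher than needed; this extra regularity is precisely what allows us to afford another multiplication by $\phi$ followed by one more application of $\Lambda$ while still landing in $X^4$.

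Concretely, let $f,g\in X^4$. By Proposition \ref{Prop:S_0_est} applied with $k=4$, the function $h(\tau):=S_0(\tau)\Lambda(fg)$ belongs to $X^5$ for every $\tau>0$, with
\[
\|h(\tau)\|_{X^5}\;\lesssim\;\beta(\tau)\,e^{-\tau/4}\,\|f\|_{X^4}\|g\|_{X^4}.
\]
Since $\phi\in\mc C\subset X^5$ by Lemma \ref{Lemma_Def_C}, Lemma \ref{Lem:Lambda(fg)_est} applied at level $k=5$ to the pair $(\phi,h(\tau))$ yields $\Lambda(\phi\,h(\tau))\in X^{4}$ together with
\[
\|\Lambda(\phi\,h(\tau))\|_{X^4}\;\lesssim\;\|\phi\|_{X^5}\|h(\tau)\|_{X^5}\;\lesssim\;\beta(\tau)\,e^{-\tau/4}\,\|f\|_{X^4}\|g\|_{X^4},
\]
which is the claimed estimate \eqref{Eq:Lambda_S_0_Lambda_est}.

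For continuity, observe that with $\phi\in X^5$ fixed, the map $h\mapsto \Lambda(\phi h)$ is a bounded linear operator from $X^5$ into $X^4$ by the same application of Lemma \ref{Lem:Lambda(fg)_est}. Composing this bounded linear map with the continuous map $\tau\mapsto S_0(\tau)\Lambda(fg):(0,\infty)\to X^5$ provided by Proposition \ref{Prop:S_0_est} gives the desired continuity of \eqref{Def:Lambda_lambda_map} into $X^4$. There is no genuine obstacle here beyond having the previous two results available: the whole point is that Proposition \ref{Prop:S_0_est} was designed to yield exactly one extra derivative, which is the buffer absorbed by the outer $\Lambda(\phi\,\cdot)$.
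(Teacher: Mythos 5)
Your proposal is correct and follows essentially the same route as the paper: the paper likewise combines the smoothing estimate \eqref{Eq:S_0_Lambda_est} (placing $S_0(\tau)\Lambda(fg)$ in $X^5$) with Lemma \ref{Lem:Lambda(fg)_est} at level $k=5$ applied to the pair $(\phi,\,S_0(\tau)\Lambda(fg))$, and derives the continuity from the continuity of $\tau\mapsto S_0(\tau)\Lambda(fg)$ into $X^5$. Your write-up merely spells out the intermediate steps more explicitly.
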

\begin{proof}
Continuity of the function \eqref{Def:Lambda_lambda_map} follows from \eqref{Eq:Lambda_est} and the continuity of the map $\tau \mapsto S_0(\tau) \Lambda (fg) : (0,\infty) \rightarrow X^5$.
The estimate \eqref{Eq:Lambda_S_0_Lambda_est} follows from \eqref{Eq:Lambda_est} and \eqref{Eq:S_0_Lambda_est}.
\end{proof}
 	Now we propagate the smoothing estimates of $S_0(\tau)$ to $S(\tau)$. We take the perturbative approach, and for that we need the following result.
\begin{lemma}\label{Lem:Duhamel}
	Let $f \in C^\infty_{c,\emph{rad}}(\R^5)$, $k \geq 3$, and $\tau \geq 0$. Then the following relations hold in $X^k$ 
	\begin{equation}\label{Eq:Duhamel_the_wrong_one}
		S(\tau)f = S_0(\tau)f + \int_{0}^{\tau}S(\tau-s)L'S_0(s)f ds,
	\end{equation}
	\begin{equation}\label{Eq:Duhamel_the_right_one}
		S(\tau)f = S_0(\tau)f + \int_{0}^{\tau}S_0(\tau-s)L'S(s)f ds.
	\end{equation}
\end{lemma}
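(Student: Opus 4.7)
The two identities are variation-of-constants formulas associated with the split $\mc L_k = \mc L_{0,k} + L'$, valid on the common subspace $\mc C \subset \mc D(\mc L_k) \cap \mc D(\mc L_{0,k})$, and the standard way to prove them is to differentiate mixed semigroup products. Here $\mc L_{0,k}$ denotes the $X^k$-closure of $L_0$ from Lemma \ref{Lem:S_0_decay}, whose domain contains $\mc C$ by the same arguments as in Proposition \ref{Prop:Semigroup_Sk}, and on $\mc C$ both closures act as the classical differential operators $L$ and $L_0$.

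For \eqref{Eq:Duhamel_the_wrong_one} I fix $\tau > 0$ and $f \in C^\infty_{c,\mathrm{rad}}(\R^5)$, and consider
\[ G : [0,\tau] \to X^k, \qquad G(s) := S(\tau - s) S_0(s) f. \]
The explicit Gaussian representation of $S_0$ shows $S_0(s) f \in \mc S_{\mathrm{rad}}(\R^5) \subset \mc C$, so $S_0(s) f \in \mc D(\mc L_k) \cap \mc D(\mc L_{0,k})$, and standard semigroup theory makes $G$ strongly differentiable on $(0,\tau)$ with
\[ G'(s) = -S(\tau - s)\mc L_k S_0(s) f + S(\tau - s) \mc L_{0,k} S_0(s) f = -S(\tau - s) L' S_0(s) f. \]
Continuity of $G'$ in $X^k$ follows from strong continuity of $S$ together with the continuity of $s \mapsto L' S_0(s) f$ in $X^k$, the latter a consequence of the Banach algebra property and the smoothness and Gaussian decay of $S_0(s) f$. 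Integrating $G'$ from $0$ to $\tau$ and using $G(0) = S(\tau) f$, $G(\tau) = S_0(\tau) f$ yields \eqref{Eq:Duhamel_the_wrong_one}.

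The second identity is proved by the same strategy with $H(s) := S_0(\tau - s) S(s) f$, which formally satisfies $H'(s) = S_0(\tau - s) L' S(s) f$. The crux, and the main obstacle, is that this computation requires $S(s) f \in \mc D(\mc L_{0,k})$, a regularity that does not follow directly from the abstract construction of $S$. I would circumvent this by bootstrapping from \eqref{Eq:Duhamel_the_wrong_one}: writing
\[ S(s) f = S_0(s) f + \int_0^s S(s - r) L' S_0(r) f \, dr, \]
the first term is Schwartz, while each integrand $S(s - r) L' S_0(r) f$ can be analyzed using the smoothing bounds for $S_0$ from Proposition \ref{Prop:S_0_est} together with the good regularity and decay of $\phi$ and $\Lambda \phi$, iterating the identity if necessary. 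This yields $S(s) f \in \mc C \subset \mc D(\mc L_{0,k})$, at which point the derivative $H'(s) = S_0(\tau - s) L' S(s) f$ is legitimate and integrating over $[0,\tau]$ gives \eqref{Eq:Duhamel_the_right_one}.
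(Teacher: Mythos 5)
Your treatment of \eqref{Eq:Duhamel_the_wrong_one} is essentially the paper's argument: differentiate $s\mapsto S(\tau-s)S_0(s)f$, use $S_0(s)f\in\mc C\subset\mc D(\mc L_k)\cap\mc D(\mc L_{0,k})$, check continuity of the derivative, and integrate. That part is fine.

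For \eqref{Eq:Duhamel_the_right_one} you correctly isolate the crux --- one needs $S(s)f\in\mc D(\mc L_{0,k})$ --- but your proposed resolution has a genuine gap. You claim that bootstrapping the first Duhamel formula together with the smoothing bounds of Proposition \ref{Prop:S_0_est} yields $S(s)f\in\mc C$. Membership in $\mc C$ (Lemma \ref{Lemma_Def_C}) requires not just smoothness but the pointwise weighted decay $|D^{\kappa}u(x)|\lesssim\langle x\rangle^{-2-\kappa}$ for every $\kappa$. The smoothing estimates at your disposal are gains of Sobolev regularity measured in $X^k$-norms; they give no weighted $L^\infty$ decay at infinity. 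Moreover, the integrand $S(s-r)L'S_0(r)f$ involves the semigroup $S$, which is constructed purely abstractly via Lumer--Phillips, so no pointwise spatial information about it is available. Iterating the identity can plausibly put $S(s)f$ in $X^{k'}$ for all $k'$, i.e.\ give smoothness, but it cannot produce the decay needed for $\mc C$, so the chain $S(s)f\in\mc C\subset\mc D(\mc L_{0,k})$ does not go through as written.

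The detour through $\mc C$ is also unnecessary. The paper's route is to observe the domain inclusion $\mc D(\mc L_{k+1})\subset\mc D(\mc L_{0,k})$: if $u_n\in C^\infty_{c,\mathrm{rad}}(\R^5)$ with $u_n\to u$ and $Lu_n$ convergent in $X^{k+1}$, then by Lemma \ref{Lem:Lambda(fg)_est} (applied with $\phi\in X^{k+1}$) the sequence $L'u_n$ converges in $X^k$, hence $L_0u_n=Lu_n-L'u_n$ converges in $X^k$ and $u\in\mc D(\mc L_{0,k})$. Since $f\in C^\infty_{c,\mathrm{rad}}(\R^5)\subset\mc D(\mc L_{k+1})$ and $S(s)|_{X^{k+1}}=S_{k+1}(s)$ preserves $\mc D(\mc L_{k+1})$, one gets $S(s)f\in\mc D(\mc L_{0,k})$ directly, and the differentiation of $s\mapsto S_0(\tau-s)S(s)f$ is then legitimate. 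You should replace your bootstrapping step by this domain-inclusion argument (or supply an actual proof of the decay of $S(s)f$, which the available estimates do not give).
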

\begin{proof}
 Define $\xi_f:[0,\tau] \rightarrow X^k$ by $s \mapsto S(\tau-s)S_0(s)f$.  We prove that $\xi_f$ is continuously differentiable. More precisely, we show that
	\begin{equation}\label{Eq:Xi_prime}
		\xi_f'(s)= -S(\tau-s)LS_0(s)f+S(\tau-s)L_0 S_0(s)f=- S(\tau-s) L' S_0(s)f,
	\end{equation}
	which is a continuous function from $[0,\tau]$ into $X^k$. To show this, we first write
	\begin{align*}
		\frac{\xi_f(s+h)-\xi_f(s)}{h} = \frac{S(\tau-s-h)-S(\tau-s)}{h}&S_0(s)f \\& \hspace{-2cm}+ S(\tau-s-h)\frac{S_0(s+h)f-S_0(s)f}{h},
	\end{align*}
and then by letting $h \rightarrow 0$ we get \eqref{Eq:Xi_prime}. For the first term above, this follows from the fact that $S_0(s)f \in \mc C \subset \mc D(\mc L_k)$ and that $\mc L_kf=Lf$ for $f \in \mc C.$ The conclusion for the second term follows by similar reasoning for $S_0(\tau)$, together with the strong continuity of $S(\tau)$ in $X^k$.
Now, continuity of $\xi'_f$ follows from the continuity of the map 
\begin{equation}\label{Def:L'_map}
	s \mapsto  L' S_0(s)f :[0,\tau] \rightarrow X^k
\end{equation}
 and the strong continuity of $S(\tau)$ in $X^k$. We note that, according to the definition of $L'$, the continuity of \eqref{Def:L'_map} follows from the strong continuity of $S_0(\tau)$ on $X^{k+1}$ and the estimate \eqref{Eq:Lambda_est}.
    Finally, by integrating \eqref{Eq:Xi_prime}, we get \eqref{Eq:Duhamel_the_wrong_one}. 
To prove \eqref{Eq:Duhamel_the_right_one}, we do the analogous thing. Namely, we consider the function
	\begin{equation*}
		 s \mapsto \eta_f(s):=S_0(\tau-s)S(s)f : [0,\tau] \rightarrow X^k,
	\end{equation*}
	which is also continuously differentiable, with 
	\begin{equation*}
		\eta'_f(s)= S_0(\tau-s)LS(s)f - S_0(\tau-s)L_0S(s)f = S_0(\tau-s) L' S(s)f.
	\end{equation*}
	To establish differentiability, it is important to note that according to the definition of the operator domain, by Lemma \ref{Lem:Lambda(fg)_est} we have that that $\mc D(\mc L_{k+1}) \subset \mc D(\mc L_{0,k})$, and therefore $S(s)f \in \mc D(\mc L_{0,k})$ for every $k \geq 3$. Continuity of $\eta'_f$, similarly to above, follows from the continuity of $s \mapsto  L' S(s)f :[0,\tau] \rightarrow X^k$ and the strong continuity of $S_0(\tau)$ in $X^k$.
\end{proof}
 Recall the operator $N$ from \eqref{Def:Operators}. According to Lemma \ref{Lem:Lambda(fg)_est} we have that $N : X^{k} \rightarrow X^{k-1}$ for $k \geq 4$. Also, recall the projection operator $\mc P=\mc P_{X^k}$ from \eqref{Def:P_k}. Now we prove the central result of this section.
\begin{proposition}\label{Prop:Nonlin_est_S}
	If $f \in X^4$ then
	\begin{equation}\label{Def:tau_S_map}
		\tau \mapsto S(\tau)N(f):(0,\infty) \rightarrow X^4
	\end{equation} 
is a continuous map. Furthermore, there exist $\tilde{\omega},\omega>0$ such that
	\begin{align}
			\| S(\tau)[N(f)-N(g)] \|_{X^{4}} &\lesssim e^{\tilde{\omega}\tau}(\| f \|_{X^4} + \| g \|_{X^4}) \| f-g \|_{X^4}, \label{Eq:S(t)N} \\
		\| (1-\mc P)S(\tau)[N(f)-N(g)] \|_{X^{4}} &\lesssim e^{-\omega\tau}(\| f \|_{X^4} + \| g \|_{X^4}) \| f-g \|_{X^4},  \label{Eq:(1-P)S_estimate}
	\end{align}
	for all $\tau > 0$ and all $f,g \in  X^4$. In addition, for every $k \geq 4$ there exists $\tilde{\omega}_k >0$  such that
	\begin{gather} \label{Eq:Smoothing_S}
		 \| S(\tau)[N(f)-N(g)] \|_{X^{k+1}}
		  \lesssim \beta(\tau) e^{\tilde{\omega}_k  \tau}(\| f \|_{X^k} + \| g \|_{X^k}) \| f-g \|_{X^k},
	\end{gather}
for all $\tau > 0$ and all $f,g \in  X^k$.
\end{proposition}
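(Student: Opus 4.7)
\noindent The key algebraic identity is
\[ N(f) - N(g) = \Lambda\bigl((f+g)(f-g)\bigr) + 6(f+g)(f-g), \]
which places $N(f)-N(g) \in X^{k-1}$ with norm $\lesssim (\|f\|_{X^k}+\|g\|_{X^k})\|f-g\|_{X^k}$ by Lemma \ref{Lem:Lambda(fg)_est} and the Banach algebra property of $X^k$, and exactly matches the bilinear form appearing in Proposition \ref{Prop:S_0_est} and Corollary \ref{Cor:Lambda_est}, with the two slots occupied by $f+g$ and $f-g$.

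For \eqref{Eq:S(t)N} and continuity of \eqref{Def:tau_S_map} I would apply the Duhamel identity \eqref{Eq:Duhamel_the_wrong_one} to $h := N(f)-N(g)$. The free piece $S_0(\tau) h$ is bounded in $X^4$ by combining \eqref{S_0_Lambda_estimate_1} on the $\Lambda$-part with Lemma \ref{Lem:S_0_decay} and the Banach algebra property on the pointwise part. For the integrand $S(\tau-s) L' S_0(s) h$, the expansion $L' u = 2\Lambda(\phi u)+12\phi u$ together with Corollary \ref{Cor:Lambda_est} (and the $X^4 \to X^5$ heat smoothing needed to handle the non-$\Lambda$ piece of $h$) yields $\|L' S_0(s) h\|_{X^4} \lesssim \beta(s) e^{-s/4}(\|f\|_{X^4}+\|g\|_{X^4})\|f-g\|_{X^4}$. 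Composing with the semigroup growth bound $\|S(\tau-s)\|_{X^4 \to X^4}\lesssim e^{\bar\omega_4(\tau-s)}$ from the dissipativity estimate \eqref{Inequ:DissEst1}, and using that $\beta$ is integrable on $(0,\tau)$, gives \eqref{Eq:S(t)N} for any $\tilde\omega > \bar\omega_4$. Continuity of $\tau \mapsto S(\tau) N(f)$ on $(0,\infty)$ then follows from the continuity statements in Proposition \ref{Prop:S_0_est} and Corollary \ref{Cor:Lambda_est}, strong continuity of $S$ on $X^4$, and dominated convergence.

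The decay estimate \eqref{Eq:(1-P)S_estimate} I would reduce to the previous step by a short bootstrap. Since Theorem \ref{Thm:DecayX} gives $S(\tau)\mc P = e^\tau \mc P$, the projection $\mc P$ commutes with $S(\tau)$, so for $\tau \geq 1$ I write
\[ (1-\mc P) S(\tau)[N(f)-N(g)] = S(\tau-1)\,(1-\mc P)\,S(1)[N(f)-N(g)]. \]
The step above bounds $\|S(1)[N(f)-N(g)]\|_{X^4}\lesssim (\|f\|_{X^4}+\|g\|_{X^4})\|f-g\|_{X^4}$, and Theorem \ref{Thm:DecayX} at $k=4$ provides the factor $e^{-\omega_4(\tau-1)}$. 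The regime $0<\tau<1$ is absorbed into the implicit constant via \eqref{Eq:S(t)N} and boundedness of $1-\mc P$ on $X^4$. Taking $\omega := \omega_4$ yields \eqref{Eq:(1-P)S_estimate}.

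The smoothing estimate \eqref{Eq:Smoothing_S} is the main obstacle. Applying \eqref{Eq:Duhamel_the_wrong_one} once more, the free term $S_0(\tau)[N(f)-N(g)]$ lands in $X^{k+1}$ directly via \eqref{Eq:S_0_Lambda_est} with the desired $\beta(\tau) e^{-\tau/4}$ control. The delicate point is the perturbative integral: $L' S_0(s) h$ naturally lives only in $X^k$ (since $\Lambda$ in $L'$ costs a derivative), while the target norm is $X^{k+1}$ and $S(\tau-s)$ is not a priori known to smooth. My plan is to iterate Duhamel once more, substituting
\[ S(\tau-s) = S_0(\tau-s) + \int_0^{\tau-s} S(\tau-s-r)\, L'\, S_0(r)\, dr, \]
and using the heat smoothing $\|S_0(\tau-s)\Lambda(\phi u)\|_{X^{k+1}} \lesssim \beta(\tau-s) e^{-(\tau-s)/4}\|\phi\|_{X^k}\|u\|_{X^k}$ from Proposition \ref{Prop:S_0_est} on the leading piece, together with the higher-$k$ generalization of Corollary \ref{Cor:Lambda_est} for the doubly-iterated remainder, combined with $X^k$-boundedness of $S(\tau-s-r)$ from \eqref{Inequ:DissEst1}. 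The main technical work is verifying that the iterated $\beta$-singularities amount to a single overall factor of $\beta(\tau)$ modulo an exponential $e^{\tilde\omega_k \tau}$; this rests on the Beta-type bound for $\int_0^\tau \beta(s)\beta(\tau-s)\, ds$, which is bounded (by $\pi$) near $\tau = 0$ and grows only linearly for large $\tau$, so is absorbed into the exponential factor.
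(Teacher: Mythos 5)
Your treatment of \eqref{Eq:S(t)N}, \eqref{Eq:(1-P)S_estimate} and the continuity of \eqref{Def:tau_S_map} is correct and essentially the paper's argument: the bilinear factorization $N(f)-N(g)=\Lambda\big((f+g)(f-g)\big)+6(f+g)(f-g)$, the Duhamel identity \eqref{Eq:Duhamel_the_wrong_one}, the free estimates of Proposition \ref{Prop:S_0_est} and Corollary \ref{Cor:Lambda_est}, and the decay of $(1-\mc P)S(\tau)$ from Theorem \ref{Thm:DecayX}. Your shortcut for \eqref{Eq:(1-P)S_estimate} via $(1-\mc P)S(\tau)=S(\tau-1)(1-\mc P)S(1)$ for $\tau\geq 1$ is a legitimate variant of the paper's direct estimate of $\int_0^\tau\|(1-\mc P)S(\tau-s)L'S_0(s)h\|_{X^4}\,ds$.

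Your plan for the smoothing estimate \eqref{Eq:Smoothing_S}, however, has a genuine gap. Iterating \eqref{Eq:Duhamel_the_wrong_one} once more produces the remainder $\int_0^\tau\!\int_0^{\tau-s}S(\tau-s-r)\,L'S_0(r)\,L'S_0(s)h\,dr\,ds$, whose outermost structure is the same as before: the full semigroup $S$ applied to the output of $L'$, which contains $\Lambda(\phi\,\cdot)$ and hence costs one derivative. Even granting that $S_0(r)L'S_0(s)h\in X^{k+1}$, the subsequent application of $L'$ lands back in $X^k$, and the $X^k$-boundedness of $S(\tau-s-r)$ that you invoke only yields an $X^k$ bound on this term, not the $X^{k+1}$ bound required. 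No finite number of iterations of \eqref{Eq:Duhamel_the_wrong_one} closes this loop (one would have to sum the full Neumann/Dyson series, a substantially different argument which you do not set up). The paper circumvents the difficulty by using the \emph{other} Duhamel identity, \eqref{Eq:Duhamel_the_right_one}, in which the free semigroup sits on the outside: the integrand $\|S_0(\tau-s)L'S(s)\Lambda(uv)\|_{X^{k+1}}$ is bounded by $e^{-(\tau-s)/4}\|S(s)\Lambda(uv)\|_{X^{k+1}}$ via \eqref{S_0_Lambda_estimate_1} applied at level $k+1$ together with the Banach algebra property, so the unknown quantity $\|S(\cdot)\Lambda(uv)\|_{X^{k+1}}$ appears linearly under the integral and the estimate closes by Gronwall's lemma. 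You should replace your double iteration by this argument.
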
	
\begin{proof}
As usual, it is enough to prove the proposition for $f,g \in C^\infty_{c,\text{rad}}(\R^5)$. We first establish the three estimates above, and then the continuity of the map \eqref{Def:tau_S_map}. The relations \eqref{Eq:S(t)N} and \eqref{Eq:(1-P)S_estimate} follow from \eqref{Eq:Duhamel_the_wrong_one}, \eqref{S_0_Lambda_estimate_1}, Theorem \ref{Thm:DecayX}, and Corollary \ref{Cor:Lambda_est}. 
 Here we explicitly show only \eqref{Eq:(1-P)S_estimate}. 
	We have that equation \eqref{Eq:Duhamel_the_wrong_one},  via \eqref{S_0_Lambda_estimate_1}, \eqref{Eq:S_decay}, and \eqref{Eq:Lambda_S_0_Lambda_est} implies
	\begin{align*}
		\| (1-\mc P)S(\tau)\Lambda(uv) \|_{X^4}
		&\lesssim \| S_0(\tau)\Lambda(uv) \|_{X^4} + \int_{0}^{\tau}\| (1-\mc P)S(\tau-s) L' S_0(s)\Lambda(uv)\|_{X^4} ds  \\ 
		 &\lesssim e^{-\frac{\tau}{4}}\| u\|_{X^4}\| v\|_{X^4} 
		 + e^{-{\omega}_4\tau}
		 \int_{0}^{\tau} e^{({\omega}_4-\frac{1}{4})s }\beta(s)ds \, \|u \|_{X^4}\|v \|_{X^4}\\
		&\lesssim (e^{-\frac{\tau}{4}} + e^{-{\omega}_4\tau} \tau)
		\| u\|_{X^4}\| v\|_{X^4} \\
		& \lesssim e^{-\frac{{\omega}_4}{2}\tau}
		\| u\|_{X^4}\| v\|_{X^4},
	\end{align*}
for all $\tau >0$ and $u,v \in C^\infty_{c,\text{rad}}(\R^5)$. The same type of estimate holds for $6uv$ instead of $\Lambda(uv)$, and \eqref{Eq:(1-P)S_estimate} then follows by letting $u=f+g$ and $v=f-g$. 
To get the estimate \eqref{Eq:Smoothing_S}, we first use \eqref{Eq:Duhamel_the_right_one} and Proposition \ref{Prop:S_0_est} to obtain
\begin{align*}
	\| S(\tau)\Lambda(uv) \|_{X^{k+1}}
	&\lesssim \| S_0(\tau)\Lambda(uv) \|_{X^{k+1}} + \int_{0}^{\tau}\| S_0(\tau-s) L' S(s)\Lambda(uv)\|_{X^{k+1}} ds  \\ 
	&\lesssim \beta(\tau)e^{-\frac{\tau}{4}}\| u\|_{X^k}\| v\|_{X^k} 
	+ e^{-\frac{\tau}{4}}
	\int_{0}^{\tau} e^{\frac{s}{4}} \| S(s)\Lambda(uv) \|_{X^{k+1}} ds,
\end{align*}
for all $\tau >0$ and $u,v \in C^\infty_{c,\text{rad}}(\R^5)$. Now, by Gronwall's lemma, from this estimate we get \eqref{Eq:Smoothing_S}.
Finally, continuity of the map \eqref{Def:tau_S_map} follows from \eqref{Eq:Duhamel_the_wrong_one}, according to Proposition \ref{Prop:S_0_est}, Corollary \ref{Cor:Lambda_est}, and the strong continuity of $S(\tau)$ in $X^4$.
\end{proof}
As the last result of this section, we prove the local Lipschitz continuity in $X^4$ of the composition of $\mc P$ and $N$.
\begin{lemma}
	We have that
\begin{equation}\label{Eq:Nonlin_est_P}
		\| \mc PN(f)- \mc PN(g) \|_{X^4} \lesssim (\| f \|_{X^4} + \| g \|_{X^4}) \| f-g \|_{X^4},
\end{equation}
for all $f,g \in X^4$.
\end{lemma}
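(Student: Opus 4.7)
The plan is to exploit the rank-one structure of $\mc P$ together with the rapid decay of the weight $\sigma$ to absorb the derivative in $\Lambda$ via a single integration by parts. Since $\mc P h = \langle h, g \rangle_{\mc H} \, g$ and $g = \nu/\|\nu\|_{\mc H} \in \mc C \subset X^4$, we have
\begin{equation*}
\| \mc P N(f) - \mc P N(g)\|_{X^4} = \big|\langle N(f) - N(g), g \rangle_{\mc H} \big| \, \|g\|_{X^4},
\end{equation*}
so the whole task reduces to estimating the weighted inner product $\langle N(f) - N(g), g\rangle_{\mc H}$.

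Setting $u := f - g$ and $v := f + g$, polarization gives $N(f) - N(g) = \Lambda(uv) + 6\, uv$. Using that $\sigma$ is a radial Schwartz function (being the product of $\phi^{-2}$ and a Gaussian) and that $\Lambda = x\cdot \nabla$, I would integrate by parts once to move $\Lambda$ off of $uv$:
\begin{equation*}
\int_{\R^5} \Lambda(uv)\, g\, \sigma \, dx = -\int_{\R^5} uv \,\bigl(5 g \sigma + \Lambda(g\sigma)\bigr) dx.
\end{equation*}
Hence
\begin{equation*}
\langle N(f) - N(g), g \rangle_{\mc H} = \int_{\R^5} uv \cdot h\, dx, \qquad h := 6 g\sigma - 5 g\sigma - \Lambda(g\sigma) = g\sigma - \Lambda(g\sigma).
\end{equation*}
The key point is that $h \in L^1(\R^5)$: indeed $\sigma$ decays exponentially, $g \in \mc C$, and $\Lambda(g\sigma)$ has the same type of decay (a polynomial times a Gaussian).

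Finally, by the embedding $X^4 \hookrightarrow L^\infty(\R^5)$ from Lemma \ref{Le:Functions_in_Xk}, we conclude
\begin{equation*}
\bigl|\langle N(f) - N(g), g \rangle_{\mc H}\bigr| \leq \|u\|_{L^\infty(\R^5)} \|v\|_{L^\infty(\R^5)} \|h\|_{L^1(\R^5)} \lesssim \|f-g\|_{X^4}\bigl(\|f\|_{X^4} + \|g\|_{X^4}\bigr),
\end{equation*}
which, combined with the opening identity and the fact that $\|g\|_{X^4}$ is a fixed constant, yields \eqref{Eq:Nonlin_est_P}. There is no real obstacle here: the only mildly delicate point is the integration by parts, which is justified first for Schwartz $f,g$ and then extended by the density of $\mc S_{\mathrm{rad}}(\R^5)$ in $X^4$ and the bilinearity of $(f,g)\mapsto N(f) - N(g)$ in its symmetric/antisymmetric variables.
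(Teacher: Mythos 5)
Your proof is correct, and it takes a genuinely different route from the paper's at the key step. Both arguments begin identically: reduce to estimating $\langle N(f)-N(g), g\rangle_{\mc H}$ via the rank-one structure of $\mc P$, and polarize $N(f)-N(g)=\Lambda(uv)+6uv$. The paper then simply applies Cauchy--Schwarz in $\mc H$, the embedding $X^3\hookrightarrow\mc H$, and the bilinear estimate of Lemma \ref{Lem:Lambda(fg)_est} to bound $\|N(f)-N(g)\|_{X^3}\lesssim\|u\|_{X^4}\|v\|_{X^4}$; the derivative in $\Lambda$ is thus absorbed at the cost of one order of Sobolev regularity. You instead transfer the derivative onto the weight by a single integration by parts, which is legitimate (note in fact that $g\sigma$ is exactly a constant multiple of the Gaussian $e^{-|x|^2/4}$, since $\nu=\phi^2$ cancels the factor $\phi^{-2}$ in $\sigma$), and then conclude by $L^1$--$L^\infty$ duality and the embedding $X^4\hookrightarrow L^\infty(\R^5)$. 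What your route buys is independence from Lemma \ref{Lem:Lambda(fg)_est}: you only use $L^\infty$ control of $u$ and $v$, not of their derivatives, so you in effect prove the stronger bound $\|\mc PN(f)-\mc PN(g)\|_{X^4}\lesssim(\|f\|_{L^\infty}+\|g\|_{L^\infty})\|f-g\|_{L^\infty}$. The paper's version is shorter only because Lemma \ref{Lem:Lambda(fg)_est} is already available and needed elsewhere. Your density remark at the end is the right way to justify the integration by parts, though for general $f,g\in X^4$ one can also note directly that $N(f)\in\mc H$ via the $W^{1,\infty}$-embedding and the Gaussian decay of $\sigma$, so the inner product is defined without passing through the integrated-by-parts formula.
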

\begin{proof}
	By definition, for $u,v \in X^4$ we have
	\begin{equation*}
		\mc P N(uv) = \langle N(uv),g\rangle_{\mc H} \, g.
	\end{equation*}
	Therefore, by Cauchy-Schwarz, the embedding $X^3 \hookrightarrow \mc H$, and Lemma \ref{Lem:Lambda(fg)_est}, we get that
	 \begin{equation*}
	 	\|\mc  PN(uv) \|_{X^4} \leq |\langle N(uv),g\rangle_{\mc H}| \|g \|_{X^4}
		\lesssim \| N(uv) \|_{X^3} 	 	
	 	\lesssim
	 	 \| u \|_{X^4}\| v \|_{X^4},
	 \end{equation*} 
for all $u,v \in X^4$.
The estimate \eqref{Eq:Nonlin_est_P} then follows by letting $u=f+g$ and $v=f-g$.
\end{proof}

\section{Construction of strong solutions}\label{Sec:Str_sol}
\noindent For simplicity, from now on we will drop the subscript in $\| \cdot \|_{X^4}$, and assume that an unspecified norm corresponds to $X^4$.
With the linear theory and the nonlinear estimates from the previous section at hand, we turn to constructing solutions to \eqref{Eq:Perturbed}. For convenience, we copy here the underlying Cauchy problem
\begin{equation}\label{Eq:Vector_pert_2}
	\begin{cases}
		~\partial_\tau \psi(\tau) =  \mc L\psi(\tau) +  N(\psi(\tau)),\\
		~\psi(0)= U( v, T).
	\end{cases}	
\end{equation}
To solve \eqref{Eq:Vector_pert_2}, we utilize the standard techniques from dynamical systems theory. First, we use the fact that $\mc L$ generates the semigroup $S(\tau)$, to rewrite \eqref{Eq:Vector_pert_2} into the integral form
\begin{equation}\label{Eq:Duhamel}
	\psi(\tau)= S(\tau) U(v,T) + \int_{0}^{\tau} S(\tau-s) N(\psi(s))ds.
\end{equation} 
Then, as $S(\tau)$ decays exponentially on the stable subspace, we employ a  fixed point argument to show existence of global solutions for small initial data. Obstruction to this is, of course, the presence of the linear instability $\la=1$. Nevertheless, as this eigenvalue is an artifact of the time translation symmetry, we use a Lyapunov-Perron type argument to suppress it by appropriately choosing the blowup time. 
Before stating the first result, we make some technical preparations. First, we introduce the Banach space
\begin{equation*}
	\mc X := \{ \psi \in C([0,\infty), X^4) : 
	\| \psi  \|_{\mc X} := \sup_{\tau>0}e^{\omega\tau}\|\psi(\tau)\| < \infty
	\}, 
\end{equation*}
where $\omega$ is from Proposition \ref{Prop:Nonlin_est_S}. Then, we denote
\begin{equation*}
	\mc X_\delta := \{ \psi \in \mc X: \| \psi \|_{\mc X} \leq \delta \}.
\end{equation*}
Now, we define a correction function $C: X^4 \times \mc X \rightarrow X^4$ by
\begin{equation}\label{Def:Correction_term}
	 C( u, \psi):= \mc P \left(  u + \int_{0}^{\infty}e^{- s} N(\psi(s))ds \right),
\end{equation}
and a map $ K_{u} : \mc X \rightarrow C([0,\infty),X^4)$ by
\begin{equation*}
	 K_{ u}( \psi)(\tau) := S(\tau)\big( u -  C( u, \psi) \big) + \int_{0}^{\tau} S(\tau-s) N(\psi(s))ds.
\end{equation*}
The fact that $K_u(\psi)(\tau)$ is a well-defined element of $X^4$ for every $\tau \geq 0$, follows from \eqref{Eq:S(t)N}. Then, the continuity of $K_u(\psi):[0,\infty) \rightarrow X^4$ follows from the continuity of $\psi$ and that of $\tau \mapsto S(\tau)N(f):(0,\infty) \rightarrow X^4$ given $f \in X^4$.
\begin{proposition}\label{Prop:K}
	For all  sufficiently small $\delta>0$ and all  sufficiently large $C > 0$ the following holds. If $u \in \mc B_{\delta/C}$ then there exits a unique $\psi = \psi({u}) \in \mc X_\delta$ for which
	\begin{equation}\label{Eq:K(u,psi)}
		\psi = { K}_{u}( \psi).
	\end{equation}
	Furthermore, the map ${u} \mapsto \psi({u}): \mc B_{\delta/C} \rightarrow \mc X$ is  Lipschitz continuous. 
\end{proposition}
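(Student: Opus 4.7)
The plan is to apply the Banach fixed point theorem to $K_u$ on the closed ball $\mc X_\delta$, for $\delta>0$ small enough and $C>0$ large enough. The purpose of the Lyapunov--Perron correction $C(u,\psi)\in\rg\mc P$ is precisely to cancel the exponential instability coming from the simple eigenvalue $\lambda=1$. Indeed, using $\mc P^2=\mc P$ one sees that $\mc P(u-C(u,\psi))=-\int_0^\infty e^{-s}\mc P N(\psi(s))\,ds$, so that together with $S(\tau)\mc P=e^\tau\mc P$ from Theorem \ref{Thm:DecayX} a short computation yields the compact representation
\begin{equation*}
\mc P K_u(\psi)(\tau)=-\int_\tau^\infty e^{\tau-s}\,\mc P N(\psi(s))\,ds,
\end{equation*}
in which the linearly growing mode is absorbed. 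This is the key algebraic identity driving the whole scheme.

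With this representation in hand, I would verify self-mapping by splitting $K_u(\psi)(\tau)$ into its $\mc P$- and $(1-\mc P)$-components. The projection estimate \eqref{Eq:Nonlin_est_P} applied to $(\psi(s),0)$ combined with $\|\psi(s)\|\le \delta e^{-\omega s}$ controls $\|\mc P K_u(\psi)(\tau)\|$ by $\int_\tau^\infty e^{\tau-s}\delta^2 e^{-2\omega s}ds\lesssim \delta^2 e^{-2\omega\tau}$, which decays strictly faster than $e^{-\omega\tau}$. For the complementary part, since $C(u,\psi)\in\rg\mc P$ we have $(1-\mc P)K_u(\psi)(\tau)=S(\tau)(1-\mc P)u+\int_0^\tau(1-\mc P)S(\tau-s)N(\psi(s))\,ds$; the first summand is bounded by $e^{-\omega\tau}\|u\|$ via Theorem \ref{Thm:DecayX}, while for the second the Lipschitz estimate \eqref{Eq:(1-P)S_estimate} (applied to $(\psi(s),0)$) gives integrand $\lesssim e^{-\omega(\tau-s)}\|\psi(s)\|^2$, producing $\int_0^\tau e^{-\omega(\tau-s)}e^{-2\omega s}ds\lesssim e^{-\omega\tau}/\omega$. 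Collecting everything yields $\|K_u(\psi)\|_{\mc X}\lesssim \|u\|+\delta^2$, which stays below $\delta$ once $\|u\|\le\delta/C$ with $C$ large and $\delta$ small.

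The contraction property $\|K_u(\psi_1)-K_u(\psi_2)\|_{\mc X}\le \tfrac{1}{2}\|\psi_1-\psi_2\|_{\mc X}$ follows by the identical split, now invoking the full bilinear forms of \eqref{Eq:(1-P)S_estimate} and \eqref{Eq:Nonlin_est_P} together with $\|\psi_1\|_{\mc X}+\|\psi_2\|_{\mc X}\le 2\delta$. Banach's fixed point theorem then supplies the unique $\psi(u)\in\mc X_\delta$ solving \eqref{Eq:K(u,psi)}. Lipschitz dependence on $u$ is obtained by subtracting the two fixed-point identities for $u,\tilde u\in\mc B_{\delta/C}$: the dependence of $K_u$ on $u$ is linear (both through $S(\tau)u$ and through the correction $C(u,\cdot)$, which is linear in its first slot), so one gets $\|\psi(u)-\psi(\tilde u)\|_{\mc X}\lesssim \|u-\tilde u\|+\tfrac{1}{2}\|\psi(u)-\psi(\tilde u)\|_{\mc X}$, which closes. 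The main obstacle is not any single estimate but the bookkeeping to ensure every contribution decays at the rate $e^{-\omega\tau}$ prescribed by the $\mc X$-norm; this is exactly what the Lyapunov--Perron correction, together with the Lipschitz estimates of Proposition \ref{Prop:Nonlin_est_S}, is designed to accomplish.
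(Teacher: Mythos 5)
Your proposal is correct and follows essentially the same route as the paper: the same rewriting of $K_u$ so that the unstable mode is absorbed into the tail integral $-\int_\tau^\infty e^{\tau-s}\mc P N(\psi(s))\,ds$, the same use of the estimates \eqref{Eq:(1-P)S_estimate} and \eqref{Eq:Nonlin_est_P} to verify self-mapping and contraction on $\mc X_\delta$, and the same contraction-based argument for Lipschitz dependence on $u$ (where $K_u(\psi)-K_{\tilde u}(\psi)=S(\tau)(1-\mc P)(u-\tilde u)$). No gaps.
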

\begin{proof}
	To utilize the decay of $ S(\tau)$ on the stable subspace, we write  $ K_{u}$ in the following way
	\begin{equation*}
		 K_{u}( \psi)(\tau)=  S(\tau)(1- \mc P) u +  \int_{0}^{\tau} S(\tau-s) (1- \mc P) N(\psi(s))ds - \int_{\tau}^{\infty} e^{\tau- s}  \mc P  N(\psi(s))ds.
	\end{equation*}
	Then, according to Proposition \ref{Prop:Nonlin_est_S} we get that if $\psi(s) \in \mc B_\delta$ for all $s \geq 0$ then
	\begin{equation*}\label{Eq:K(u,phi)_2}
		\|  K_{ u}( \psi)(\tau) \| \lesssim e^{-\omega \tau}\|  u \| + e^{-\omega \tau} \int_{0}^{\tau}e^{\omega s}\| \psi(s) \|^2ds + e^{\tau}\int_{\tau}^{\infty} e^{-s} \|  \psi(s)  \|^2 ds.
	\end{equation*}
	Furthermore, if $  u \in \mc B_{\delta/C}$ and $\psi \in \mc X_{\delta}$ then the above estimate implies the bound
	\begin{equation*}
		e^{\omega \tau}\|  K_{ u}( \psi)(\tau) \| \lesssim \tfrac{\delta}{C} + \delta^2 + \delta^2 e^{-\omega\tau}.
	\end{equation*}
	Also, we similarly get that
	\begin{equation*}
		e^{\omega \tau}\|  K_{ u}( \psi)(\tau) -  K_{ u}( \varphi)(\tau) \| \lesssim (\delta + \delta e^{-\omega \tau})\| \psi - \varphi \|_{\mc X}
	\end{equation*}
	for all $\psi,\varphi \in \mc X_\delta$.
	Now, the last two displayed equations imply that for all small enough $\delta$ and for all large enough $C$, given $ u \in \mc B_{\delta/C}$ the operator $ K_{ u}$ is contractive on $\mc X_\delta$, with the contraction constant $\frac{1}{2}$. Consequently, the existence and uniqueness of solutions to \eqref{Eq:K(u,psi)} follows from the Banach fixed point theorem. The show continuity of the map $ u \mapsto \psi( u) $ we utilize the contractivity of $ K_{ u}$. Namely, we have the estimate
	\begin{align*}
		\| \psi( u)(\tau) - \psi( v)(\tau) \| &=
		\|  K_{ u}( \psi( u))(\tau) -  K_{ v}( \psi( v))(\tau)\| \\
		&\leq  	\|  K_{ u}( \psi( u))(\tau) -  K_{ v}( \psi( u))(\tau)\| + \|  K_{ v}( \psi( u))(\tau) -  K_{ v} (\psi( v))(\tau)\|  \\
		&\leq \|  S(\tau)(1- \mc P)( u- v) \| + \tfrac{1}{2}\| \psi( u)(\tau) - \psi( v)(\tau) \| \\ 
		&\leq  Ce^{-\omega\tau} \|  u -  v \| + \tfrac{1}{2}\| \psi( u)(\tau) - \psi( v)(\tau) \|,
	\end{align*}
	wherefrom the Lipschitz continuity follows.
\end{proof}

\begin{lemma}\label{Lem:U(v,t)}
	For $\delta \in (0,\frac{1}{2}]$ and ${ v} \in X^4 $ the map
	\begin{equation*}
		T \mapsto { U}({ v},T):[1-\delta,1+\delta] \rightarrow X^4
	\end{equation*}
	is continuous. In addition, we have that
	\begin{equation}\label{Eq:U(v,T)_est}
		\| { U}({ v},T) \| \lesssim  \| { v} \| + |T-1|
	\end{equation}
	for all ${ v} \in X^4 $ and all $T \in [\frac{1}{2},\frac{3}{2}]$.

\end{lemma}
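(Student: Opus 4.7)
The plan is to substitute $w_0 = v + \phi$ into the definition of $U(v,T)$ in \eqref{Eq:Init_data_pert} and reduce it to the clean split
\[ U(v,T) = T v(\sqrt{T}\,\cdot) + \big[T\phi(\sqrt{T}\,\cdot) - \phi\big], \]
which isolates the generic rescaling of the data $v$ from a deterministic rescaling of the fixed profile $\phi$. This decomposition is the backbone of both claims, and in particular lets all derivative costs be paid by $\phi$ rather than by $v$.

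For the bound \eqref{Eq:U(v,T)_est}, I would first use the standard scaling identity $\|f(\lambda\,\cdot)\|_{\dot H^k(\R^5)} = \lambda^{k-5/2}\|f\|_{\dot H^k(\R^5)}$ with $\lambda = \sqrt{T}$ and $k \in \{1,4\}$ to conclude that $\|T v(\sqrt{T}\,\cdot)\|_{X^4} \lesssim \|v\|_{X^4}$ uniformly for $T \in [\tfrac12,\tfrac32]$. For the second piece, I would write
\[ T\phi(\sqrt{T}\,\cdot) - \phi = \int_1^T \partial_{\tau}\big[\tau \phi(\sqrt{\tau}\,\cdot)\big]\,d\tau = \int_1^T \nu(\sqrt{\tau}\,\cdot)\,d\tau, \]
where $\nu = \phi + \tfrac12 \Lambda\phi$ is precisely the eigenfunction from \eqref{Def:nu}. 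By Lemma \ref{Lemma_Def_C}, $\nu \in \mc C \subset X^4$, and the same scaling identity as above shows that $\|\nu(\sqrt{\tau}\,\cdot)\|_{X^4}$ is bounded uniformly in $\tau \in [\tfrac12, \tfrac32]$. The integral representation then yields the desired $|T-1|$ contribution, and combining the two pieces gives \eqref{Eq:U(v,T)_est}.

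For the continuity claim, the $\phi$-part is in fact $C^1$ in $T$ thanks to the integral representation just displayed. For the generic part, set $\Sigma(T)v := T v(\sqrt{T}\,\cdot)$ and observe that, by the scaling identity, $\{\Sigma(T) : T \in [1-\delta,1+\delta]\}$ is a uniformly bounded family of linear operators on $X^4$. It therefore suffices to verify continuity of $T \mapsto \Sigma(T)v$ on the dense subspace $C^\infty_{c,\mathrm{rad}}(\R^5)$: for such $v$ one can differentiate directly, obtaining an integrand $v(\sqrt{\tau}\,\cdot) + \tfrac12 (\Lambda v)(\sqrt{\tau}\,\cdot)$ which is smooth and compactly supported and hence lies in $X^4$ with uniformly bounded norm, so the map is even locally Lipschitz. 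A standard three-$\varepsilon$ argument then upgrades this to continuity for every $v \in X^4$.

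The only mild subtlety --- and really what the decomposition is designed to sidestep --- is that naively differentiating $T \mapsto T v(\sqrt{T}\,\cdot)$ for generic $v \in X^4$ produces a factor of $\Lambda v$, which is \emph{not} controlled by $\|v\|_{X^4}$. By isolating the fixed profile $\phi$, all such derivative costs are absorbed into the smooth, well-decaying function $\nu \in \mc C$, while the rough piece $Tv(\sqrt{T}\,\cdot)$ is estimated purely by boundedness of the scaling operators and handled qualitatively via density, without any quantitative $|T-1|$ rate.
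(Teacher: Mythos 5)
Your proposal is correct and follows essentially the same route as the paper: the same decomposition $U(v,T)=Tv(\sqrt{T}\,\cdot)+\big[T\phi(\sqrt{T}\,\cdot)-\phi\big]$ for the estimate, and uniform boundedness of the scaling operators plus density of $C^\infty_{c,\mathrm{rad}}(\R^5)$ for continuity. You merely make explicit the step the paper leaves implicit ("from here the estimate follows"), namely that $\partial_\tau\big[\tau\phi(\sqrt{\tau}\,\cdot)\big]=\nu(\sqrt{\tau}\,\cdot)$ with $\nu\in\mc C\subset X^4$ --- an observation the paper itself records later in the proof of Theorem \ref{Thm:CoMain}.
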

\begin{proof}
	Fix $\delta \in (0,\frac{1}{2}]$ and $ v \in X^4$. Then for $T,S \in [1-\delta,1+\delta]$ we have that
	\begin{equation}\label{Eq:T-S}
		U(v,T) -  U(v,S) = (T-S)w_0(\sqrt{T}\cdot) + S\big( w_0(\sqrt{T}\cdot) - w_0(\sqrt{S}\cdot) \big). 
	\end{equation}
	Let $\varepsilon >0$. By density, we know that there exists $\tilde{w}_0 \in C^\infty_{c,\text{rad}}(\R^5)$ for which $\| w_0 - \tilde{w}_0 \| < \varepsilon$. Now, by writing
	\begin{equation*}
		w_0(\sqrt{T}\cdot) - w_0(\sqrt{S}\cdot) = \big ( w_0(\sqrt{T}\cdot)-\tilde{w}_0(\sqrt{T}\cdot) \big) + \big( \tilde{w}_0(\sqrt{T}\cdot) - \tilde{w}_0(\sqrt{S}\cdot) \big) + \big( \tilde{w}_0(\sqrt{S}\cdot) - w_0(\sqrt{S}\cdot) \big)
	\end{equation*}
	and using the fact that 
	$\lim_{S \rightarrow T} \| \tilde{w}_0(\sqrt{T}\cdot) - \tilde{w}_0(\sqrt{S}\cdot)  \|=0,
	$
	from \eqref{Eq:T-S} we see that 
	\begin{equation*}
		\lim_{S \rightarrow T} \| U( v,T) - U( v,S) \| \lesssim \varepsilon.
	\end{equation*}
	Then, continuity follows by letting $\varepsilon \rightarrow 0$. For the second part of the lemma, we write  $ U( v,T)$ in the following way
	\begin{equation}\label{Eq:U(v,T)}
		 U( v,T) = Tv(\sqrt{T}\cdot)
+
			T \phi(\sqrt{T}\cdot)-\phi
	\end{equation}
 From here, the estimate \eqref{Eq:U(v,T)_est} follows.
\end{proof}

	Finally, by using the results above, we prove that given initial datum $v$ that is small in $X^4$, there exists a time $T$ and an exponentially decaying solution $\psi \in C([0,\infty),X^4)$ to \eqref{Eq:Duhamel}.

\begin{theorem}\label{Thm:CoMain}
	There  exist $\delta,N>0$ such that the following holds. If
	\begin{equation}\label{Eq:v_smallness}
		{ v} \in  X^4, \quad \text{{v} is  real-valued}, \quad \text{and} \quad \| { v} \| \leq \tfrac{\delta}{N^2},
	\end{equation}
	then there exist $T \in [1-\frac{\delta}{N}, 1+\frac{\delta}{N}]$ and $ \psi \in \mc X_{\delta}$ such that \eqref{Eq:Duhamel} holds for all $\tau \geq 0.$ 
\end{theorem}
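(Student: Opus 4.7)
The plan is to run a Lyapunov--Perron type argument. Proposition \ref{Prop:K} produces, for every sufficiently small datum $u$, a unique fixed point $\psi(u) \in \mc X_\delta$ of $K_u$. Comparing the formulas, one sees that the Duhamel equation \eqref{Eq:Duhamel} with $u = U(v,T)$ is equivalent to the pair
\begin{equation*}
\psi = K_{U(v,T)}(\psi) \quad \text{and} \quad C(U(v,T), \psi) = 0.
\end{equation*}
The first equation is automatic once one works with $\psi = \psi(U(v,T))$. The second is a single scalar equation in $T$, since $\mc P$ has one-dimensional range $\rg \mc P = \text{span}(g)$; I will solve it by the intermediate value theorem.

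First I fix a large constant $N \geq C$, where $C$ is the constant from Proposition \ref{Prop:K}, and restrict $T$ to $I := [1-\delta/N, 1+\delta/N]$. For $v$ satisfying \eqref{Eq:v_smallness}, Lemma \ref{Lem:U(v,t)} gives $\| U(v,T) \| \lesssim \|v\| + |T-1| \leq \delta/N^2 + \delta/N \leq \delta/C$ for $N$ large. Hence Proposition \ref{Prop:K} supplies a unique $\psi_T := \psi(U(v,T)) \in \mc X_\delta$. Composing the Lipschitz map $u \mapsto \psi(u)$ with the continuous map $T \mapsto U(v,T)$, and using that $(u,\psi) \mapsto C(u,\psi)$ is linear in $u$ and $O(\delta)$-Lipschitz in $\psi \in \mc X_\delta$ by \eqref{Eq:Nonlin_est_P}, the scalar function
\begin{equation*}
F(T) := \langle C(U(v,T), \psi_T), g \rangle_{\mc H}
\end{equation*}
is continuous on $I$.

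The decisive step is to extract the linear-in-$(T-1)$ behavior of $F$. Writing $U(v,T) = T v(\sqrt{T}\,\cdot) + h(T)$ with $h(T) := T\phi(\sqrt{T}\,\cdot)-\phi$, a direct computation gives $h(1) = 0$ and $h'(1) = \phi + \tfrac{1}{2}\Lambda\phi = \nu$. Using the pointwise decay of $\phi$ and its derivatives together with the $\mc H$-weight $\sigma$, a straightforward Taylor argument with dominated convergence yields
\begin{equation*}
\langle h(T), g\rangle_{\mc H} = (T-1)\,\|\nu\|_{\mc H} + O\bigl((T-1)^2\bigr)
\end{equation*}
uniformly on $I$. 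The contribution of $Tv(\sqrt{T}\,\cdot)$ to $F$ is bounded by $C_1\|v\| \leq C_1 \delta/N^2$ via the embedding $X^4 \hookrightarrow \mc H$, and the nonlinear integral is controlled through \eqref{Eq:Nonlin_est_P} by
\begin{equation*}
\Bigl| \Bigl\langle \int_0^\infty e^{-s} N(\psi_T(s))\, ds, g \Bigr\rangle_{\mc H} \Bigr| \lesssim \int_0^\infty e^{-s} \| \psi_T(s) \|^2 ds \lesssim \delta^2.
\end{equation*}
Collecting these estimates gives $F(T) = (T-1)\|\nu\|_{\mc H} + E(T,v)$ with $|E(T,v)| \leq C_2(\delta/N^2 + \delta^2 + (T-1)^2)$.

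Choosing $N$ large enough that $\|\nu\|_{\mc H}/N$ dominates $C_2/N^2$, and then $\delta$ small enough that $\|\nu\|_{\mc H}$ dominates $C_2(\delta + \delta/N)$, the principal term $\pm(\delta/N)\|\nu\|_{\mc H}$ at the endpoints of $I$ strictly dominates $|E|$. Hence $F(1-\delta/N)$ and $F(1+\delta/N)$ have opposite signs, and the intermediate value theorem yields $T^* \in I$ with $F(T^*) = 0$. Since $C$ lands in $\text{span}(g)$, this is exactly $C(U(v,T^*),\psi_{T^*}) = 0$, which reduces the identity $\psi_{T^*} = K_{U(v,T^*)}(\psi_{T^*})$ to \eqref{Eq:Duhamel}. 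Setting $\psi := \psi_{T^*}$ completes the proof. The main technical obstacle is the Taylor expansion of $\langle h(T), g\rangle_{\mc H}$ with an error quadratic in $T-1$; everything else reduces to routine application of the Lipschitz/continuity properties already established.
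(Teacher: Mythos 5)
Your proposal is correct and follows essentially the same Lyapunov--Perron route as the paper: solve the modified fixed-point equation via Proposition \ref{Prop:K} for each $T$ in the interval, then kill the rank-one correction term by locating a zero of the scalar function of $T$, whose leading linear behavior comes from $\partial_T\, T\phi(\sqrt{T}\cdot)\big|_{T=1}=\nu$. The only cosmetic differences are that you pair against $g$ in $\mc H$ rather than $X^4$ and invoke the intermediate value theorem directly instead of reformulating as $T=F(T)$ and citing Brouwer, which in one dimension amounts to the same argument.
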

\begin{proof}
	Lemma \ref{Lem:U(v,t)} and Proposition \ref{Prop:K} imply that for all small enough $\delta$ and all large enough $N$ we have that if $ v$ satisfies \eqref{Eq:v_smallness} and $T \in [1-\frac{\delta}{N} , 1+\frac{\delta}{N}]$ then there is a unique $\psi=\psi({ v},T) \in \mc X_\delta $ that solves
	\begin{equation}\label{Eq:Duhamel_C}
		\psi(\tau) = S(\tau)\big( U( v,T) -  C( U( v,T), \psi ) \big) + \int_{0}^{\tau} S(\tau-s) N\big(\psi (s)\big)ds.
	\end{equation}
	We remark that $\psi(\tau)$ is real-valued for all $\tau \geq 0$, since the set of real-valued functions in $X^4$ is invariant under the action of both $ S(\tau)$ and $\mc P$. Now, to construct solutions to \eqref{Eq:Duhamel}, we prove that there is a choice of $\delta$ and $N$ such that for any $ v$ that satisfies \eqref{Eq:v_smallness} there is $T=T( v) \in[1-\frac{\delta}{N} , 1+\frac{\delta}{N}]$ for which
	the correction term in \eqref{Eq:Duhamel_C} vanishes.
	As $ C$ takes values in $\rg  \mc P = \langle  g \rangle$, it is enough to show existence of $T$ for which
	\begin{equation}\label{Eq:Fixed_pt}
		\big\langle C( U( v,T), \psi({ v},T)),  g\big\rangle_{X^4}=0.
	\end{equation} 
	We therefore consider the real function $T \mapsto \langle C( U( v,T), \psi({ v},T)),  g \rangle_{X^4} $ and employ the Brouwer fixed point theorem to prove that it vanishes on $[1-\frac{\delta}{N} , 1+\frac{\delta}{N}]$. The central observation to this end is that, according to \eqref{Def:nu}, we have
	\begin{equation*}
		\partial_T \,
			T \phi (\sqrt{T}\cdot)  \big|_{T=1} 
		=    c g,
	\end{equation*}
	for some $c>0$. Based on this, by Taylor's formula, from \eqref{Eq:U(v,T)}  we get that 
	\begin{equation*}
		\big\langle \mc P U( v,T) ,  g \big\rangle_{X^4}=  c \|  g\|^2 \,(T-1)+R_1( v,T),
	\end{equation*}
	where  $R_1( v,T)$ is continuous in $T$ and $R_1( v,T) \lesssim \delta/N^2$.
	Furthermore, based on the definition of the correction function $ C$, we similarly conclude that
	\begin{equation*}
		\big\langle C( U( v,T), \psi({ v},T)),  g \big\rangle_{X^4}= c \|  g\|^2 \,(T-1) +  R_2( v , T),
	\end{equation*}
	where $T \mapsto R_2( v,T)$ is a continuous, real-valued function on $[1-\frac{\delta}{N} , 1+\frac{\delta}{N}]$, for which  $R_2( v,T) \lesssim \delta/N^2 + \delta^2.$
	Therefore, there is a choice of sufficiently large $N$ and sufficiently small $\delta$ such that $|R_2( v,T)| \leq c  \| g \|^2 \frac{\delta}{N}$. Based on this, we get that \eqref{Eq:Fixed_pt}  is equivalent to
	\begin{equation}\label{Eq:T}
		T=F(T)
	\end{equation}
	for some function $F$ which maps the interval $[1-\frac{\delta}{N} , 1+\frac{\delta}{N}]$ continuously into itself. Consequently, by the Brouwer fixed point theorem we infer the existence of $T \in [1-\frac{\delta}{N} , 1+\frac{\delta}{N}]$ for which \eqref{Eq:T}, and therefore \eqref{Eq:Fixed_pt}, holds. The claim of the theorem follows.
\end{proof}

\section{Upgrade to classical solutions}\label{Sec:Upgrade_to_class}
\noindent  In this section we show that if the initial datum $v$ is smooth and rapidly decaying, then the corresponding strong solution to \eqref{Eq:Duhamel} is in fact smooth, and satisfies \eqref{Eq:Vector_pert_2} classically. To accomplish this, we first use abstract results of the semigroup theory to upgrade  strong solutions to classical ones in the semigroup sense. Then we use repeated differentiation together with Schwarz's theorem on mixed partials to upgrade these to smooth solutions that solve \eqref{Eq:Vector_pert_2} classically.

\begin{proposition}\label{Prop:Upgrade_to_class}
	If $v$ from Theorem \ref{Thm:CoMain} belongs to the radial Schwartz class $\mc S_{\emph{rad}}(\R^5)$, then the function $\Psi(\tau,\xi):=\psi(\tau)(\xi)$ belongs to $C^\infty([0,\infty)\times \R^5)$  and satisfies
	\begin{equation}\label{Eq:Classical_sim_var}
		\partial_\tau \Psi(\tau,\cdot) = L \Psi(\tau,\cdot) + N(\Psi(\tau,\cdot)) 
	\end{equation}
	 in the classical sense.
\end{proposition}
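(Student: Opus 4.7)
The plan is to bootstrap in two stages: first upgrade to arbitrary spatial regularity via the smoothing estimate \eqref{Eq:Smoothing_S}, then establish temporal differentiability using the semigroup structure, and finally invoke Schwarz's theorem on mixed partials to obtain joint smoothness on $[0,\infty)\times\R^5$.

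For the spatial bootstrap, I first observe that $v\in\mc S_{\mathrm{rad}}(\R^5)$ and an elementary computation gives $T\phi(\sqrt{T}\cdot)-\phi = 4(T-1)/[(2+T|\cdot|^2)(2+|\cdot|^2)]$, which lies in $\mc C$. Hence by Lemma \ref{Lemma_Def_C} the initial datum $U(v,T)$ belongs to $\mc C\subset X^k$ for every $k\geq 3$, and by Proposition \ref{Prop:Semigroup_Sk} the orbit $\tau\mapsto S(\tau)U(v,T)$ lies in $C([0,\infty),X^k)$. The inductive claim $\psi\in C([0,\infty),X^k)$ has base case $k=4$ from Theorem \ref{Thm:CoMain}. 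For the inductive step, applying \eqref{Eq:Smoothing_S} with $g=0$ to \eqref{Eq:Duhamel} produces
\begin{equation*}
\|\psi(\tau)\|_{X^{k+1}} \lesssim \|S(\tau)U(v,T)\|_{X^{k+1}} + \int_0^{\tau}\beta(\tau-s)e^{\tilde\omega_k(\tau-s)}\|\psi(s)\|_{X^k}^2\,ds,
\end{equation*}
and the local integrability of $\beta$ near $0$ renders the right-hand side finite on every compact time interval. Continuity of $\psi$ into $X^{k+1}$ then follows via strong continuity of $S$ on $X^{k+1}$ together with dominated convergence. Combined with Lemma \ref{Le:Functions_in_Xk}, this yields $\Psi(\tau,\cdot)\in C^\infty_{\mathrm{rad}}(\R^5)$ for every $\tau\geq 0$.

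For temporal regularity, since $U(v,T)\in\mc C\subset\mc D(\mc L_k)$, standard semigroup theory gives $\tau\mapsto S(\tau)U(v,T)\in C^1([0,\infty),X^k)$ with derivative $\mc L_k S(\tau)U(v,T)$. For the Duhamel integral, the spatial bootstrap delivers $s\mapsto N(\psi(s))\in C([0,\infty),X^{k+2})$, and a direct check---using that $\Delta$, $\Lambda$, and multiplication by $\phi$ each lose at most two derivatives---confirms $X^{k+2}\hookrightarrow\mc D(\mc L_k)$. Classical arguments for inhomogeneous abstract Cauchy problems then imply that $\tau\mapsto\int_0^\tau S(\tau-s)N(\psi(s))\,ds$ is $C^1$ into $X^k$ with derivative $N(\psi(\tau))+\mc L_k\int_0^\tau S(\tau-s)N(\psi(s))\,ds$. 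Summing contributions yields
\begin{equation*}
\partial_\tau\psi(\tau) = \mc L_k\psi(\tau)+N(\psi(\tau)) \quad\text{in } X^k,
\end{equation*}
and since $\psi(\tau)\in\mc C$, Proposition \ref{Prop:Semigroup_Sk} lets us replace $\mc L_k\psi$ by the classical differential operator $L\psi$, so the equation holds pointwise on $\R^5$.

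Higher-order time derivatives are obtained by iteration: differentiating the above identity and using $\partial_\tau N(\psi) = 2\Lambda(\psi\,\partial_\tau\psi) + 12\,\psi\,\partial_\tau\psi$ closes the loop, giving $\partial_\tau^m\psi\in C([0,\infty),X^k)$ for every $m$ and $k$. With all mixed partials $\partial_\tau^m\partial_\xi^\alpha\Psi$ available and jointly continuous, Schwarz's theorem on equality of mixed partials upgrades $\Psi$ to a $C^\infty$ function on $[0,\infty)\times\R^5$, and \eqref{Eq:Classical_sim_var} then holds in the classical sense. The main obstacle is the first temporal differentiation: the derivative nonlinearity in $N$ prevents a direct textbook application of Lipschitz-perturbation results, and it is precisely the smoothing estimate \eqref{Eq:Smoothing_S}---which lets the Duhamel integral gain a derivative and thus land in $\mc D(\mc L_k)$---that makes the temporal upgrade go through.
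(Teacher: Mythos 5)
Your spatial bootstrap matches the paper's: apply \eqref{Eq:Smoothing_S} inductively to the Duhamel formula to conclude $\psi(\tau)\in X^k$ for every $k$, and then use the Sobolev embedding of Lemma \ref{Le:Functions_in_Xk} to get spatial $C^\infty$. The problem is the temporal step.

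You claim that ``a direct check confirms $X^{k+2}\hookrightarrow\mathcal D(\mathcal L_k)$'', and on that basis you invoke the version of the inhomogeneous-ACP theorem where the forcing is continuous with values in the domain of the generator. The embedding claim is not true. The generator $L$ contains the drift term $\tfrac12\Lambda f = \tfrac12\, x\cdot\nabla f$, whose coefficient grows linearly. Using the commutator \eqref{Comm_DLambda}, $D^j\Lambda f = \Lambda D^j f + jD^j f$, so $\|D^j\Lambda f\|_{L^2(\R^5)}$ is controlled by $\|\,|\cdot|\,D^{j+1}f\|_{L^2(\R^5)}$ plus lower-order terms. Membership of $f$ in $X^{k+2}=\dot H^1\cap\dot H^{k+2}$ gives $\|D^{j+1}f\|_{L^2}$ but does not give the weighted quantity $\|\,|\cdot|\,D^{j+1}f\|_{L^2}$, which requires decay, not regularity. (This is why the paper has to prove $\mathcal C\subset\mathcal D(\mathcal L_k)$ separately and with care in Proposition \ref{Prop:Semigroup_Sk}: the decay hypothesis in the definition of $\mathcal C$ is essential, and no purely Sobolev condition can replace it.) So $X^{k+2}\not\hookrightarrow\mathcal D(\mathcal L_k)$, and version (i) of the Cazenave--Haraux result is not available the way you want to use it.

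The paper avoids this by invoking version (ii) instead: the forcing need not lie in the domain if it is Lipschitz in time with values in the ambient space. Concretely, from \eqref{Eq:S(t)N} together with \eqref{Eq:Duhamel} and Gronwall, one deduces that $\tau\mapsto\psi(\tau)$ is Lipschitz $[0,\mathcal T]\to X^4$; then Lemma \ref{Lem:Lambda(fg)_est} upgrades this to $\tau\mapsto N(\psi(\tau))$ Lipschitz $[0,\mathcal T]\to X^3$; and since $U(v,T)\in\mathcal C\subset\mathcal D(\mathcal L_3)$, the mild solution is in $C^1([0,\infty),X^3)$ and satisfies the abstract equation. From there, $X^3\hookrightarrow L^\infty(\R^5)$ makes the $\tau$-derivative pointwise, and the strong Schwarz theorem finishes the argument. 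To repair your proof you would need to replace the domain-embedding step by this Lipschitz-in-time argument (or, alternatively, establish decay estimates for $\psi(\tau)$ and $N(\psi(\tau))$ of the type $|D^\kappa\,\cdot\,(x)|\lesssim\langle x\rangle^{-2-\kappa}$ to land in $\mathcal C$, which is a substantially different and harder route).
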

\begin{proof}
	Recall from the linear theory that 
	\begin{equation}
		 S(\tau)|_{X^k} =  S_k(\tau).
	\end{equation}
	By this, from \eqref{Eq:Duhamel} and \eqref{Eq:Smoothing_S} we have that there is $\alpha \in \R$ such that 
	\begin{align*}
		\|\psi(\tau)\|_{X^{5}} &\lesssim 
		e^{\alpha \tau} \| U( v,T)\|_{X^{5}} + \int_{0}^{\tau} \| S(\tau-s) N(\psi(s))\|_{X^{5}}ds\\
		& \lesssim e^{\alpha \tau} \| U( v,T)\|_{X^{5}} + \int_{0}^{\tau}\beta(\tau-s) e^{\tilde{\omega}_4 (\tau-s)} \| \psi(s)\|^2_{X^{4}}ds.
	\end{align*}
	Consequently, $\psi(\tau) \in X_{5}$ for all $\tau \geq 0$. Since $ U(v,T) \in X^k$ for all $ k \geq 5$, we proceed inductively to get that $\psi(\tau) \in X^k$ for every $k \geq 5$. Then, by the embedding $X^k \hookrightarrow C^{k-3}_{\text{rad}}(\R^5)$ we conclude that $\psi(\tau) \in C^\infty(\R^5)$ for all $\tau \geq 0$.
	
	To establish regularity in $\tau$, we do the following. First, according to \eqref{Eq:S(t)N}, from \eqref{Eq:Duhamel} by Gronwall's inequality we conclude that $\psi :[0,\mc T] \rightarrow X^4$ is Lipschitz continuous for every $\mc T>0$.
	Consequently, according to Lemma \eqref{Lem:Lambda(fg)_est} we have that $\tau \mapsto N(\psi(\tau)):[0,\mc T] \rightarrow X^3$ is Lipschitz continuous for every $\mc T >0$. This, together with the fact that $U(v,T) \in \mc D( \mc L_3)$ implies that $\psi \in C^1([0,\infty),X^3)$, and $\psi$ satisfies \eqref{Eq:Vector_pert_2} in $X^3$ in the operator sense (see, e.g.,  Cazenave-Haraux \cite{CazHar98}, p.~51, Proposition 4.1.6, (ii)). Furthermore, as $X^3$ is continuously embedded in $L^\infty(\R^5)$ the $\tau$-derivative holds pointwise. Consequently, by (a strong version of) the Schwarz theorem (see, e.g., Rudin \cite{Rud76}, p.~235, Theorem 9.41), we  conclude that mixed derivatives of all orders in $\tau$ and $\xi$  exist, and we thereby infer smoothness  of $(\tau,\xi) \mapsto \psi(\tau)(\xi)$.
\end{proof}

\begin{proof}[Proof of Theorem \ref{Thm:Main}]
	Due to Lemma \ref{Lem:Sobolev_equiv} we can choose $\varepsilon>0$  small enough such that
	$$
	\| \varphi_0 \|_{H^{3}(\R^3)}   < \varepsilon \quad  \text{implies} \quad 	
	\| v \|_{\dot{H}^1 \cap \dot{H}^{4}(\R^5)} < \frac{\delta}{N^2},
	$$
	for $\delta,N$ from Theorem \ref{Thm:CoMain}.  Then, according to Theorem \ref{Thm:CoMain} there exists a solution $\psi \in C([0,\infty),X^4)$ to \eqref{Eq:Duhamel}, for which
	\begin{equation}\label{Eq:Est_Phi}
		\|\psi(\tau)\|_{X^4} \leq \delta e^{-\omega \tau}.
	\end{equation}
	Now, since, by assumption,  $v$ belongs to $\mc S_{\text{rad}}(\R^5)$, Proposition \ref{Prop:Upgrade_to_class} implies that $\Psi(\tau,\xi) = \phi(\xi) + \psi(\tau)(\xi)$ is smooth and solves \eqref{Eq:NLH_sim_var} classically. Therefore,
	\begin{equation*}
		w(t,x)=\frac{1}{T-t}{\Psi}\left(\log\left(\frac{T}{T-t}\right),\frac{x}{T-t}\right)
	\end{equation*}
	belongs to $C^\infty([0,T)\times \R^5)$ and solves the system \eqref{Eq:NLH} on $[0,T)\times \R^5$ classically. This then yields a smooth solution to \eqref{Eq:KS}
	\begin{equation*}
		u(t,x)=\frac{1}{T-t}\left[ \Phi\left(\frac{x}{\sqrt{T-t}}\right) +  \varphi \left(t, \frac{x}{\sqrt{T-t}}\right) \right],
	\end{equation*}
	where, according to \eqref{Eq:Sobol_equiv} and \eqref{Eq:Est_Phi} we have
	\begin{align*}
	\| \varphi(t,\cdot) \|_{H^3(\R^3)} \simeq	\| \varphi(t,\cdot) \|_{L^2 \cap \dot{H}^3(\R^3)} \simeq \| \psi(-\log(T-t)-\log T) \|_{\dot{H}^1 \cap \dot{H}^4(\R^5)} \lesssim (T-t)^\omega, 
	\end{align*}
	as $t \rightarrow T^-$.
\end{proof}

\appendix

\section{Estimates of local Sobolev norms}

\begin{lemma}\label{Lem:Local_Sobolev}
	Let $k \in \N$ and $R>0.$ Then 
	
	\begin{equation}\label{Eq:Local_Sob_est}
		\| \partial^\alpha u \|_{L^2(\B^5_R)} \lesssim \sum_{j=0}^{k} \| D^j u \|_{L^2(\B^5_R)}
	\end{equation}
for all $u \in C^\infty_{\emph{rad}}(\B^5_R)$ and all $\alpha \in \N_0^5$ with $|\alpha|=k$.
\end{lemma}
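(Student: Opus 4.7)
My plan is to prove the lemma by induction on $k$, using an \emph{exact} integration-by-parts identity for radial functions on the ball that plays the role of a Hardy inequality but in which all boundary contributions enter with the correct sign.

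The starting point is an explicit expansion: for any radial $u=\tilde u(|\cdot|)$ and multi-index $\alpha$ with $|\alpha|=k$, repeated application of the chain rule gives
$$
\partial^\alpha u(x) \;=\; \sum_{j=1}^{k}\tilde u^{(j)}(|x|)\, c^{\alpha}_{j}(x),
$$
where each $c^{\alpha}_{j}$ is a finite sum of terms of the shape $x^{\beta}/|x|^{|\beta|+k-j}$, hence homogeneous of degree $j-k$; in particular $|c^{\alpha}_{j}(x)|\lesssim |x|^{j-k}$. Passing to polar coordinates, the claim of the lemma reduces to showing
$$
\int_{0}^{R}|\tilde u^{(j)}(r)|^{2}\,r^{4-2(k-j)}\,dr \;\lesssim\; \sum_{m=0}^{k}\|D^{m}u\|_{L^{2}(\B^{5}_{R})}^{2} \qquad (1\le j\le k).
$$

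The base case $k=2$ is the model for the whole induction. A single integration by parts against the natural weight $r^4\,dr$, using only $\tilde u'(0)=0$ (which is forced by smoothness of $u$ at the origin), yields the exact identity
$$
\int_{0}^{R}|D^2 u|^{2}\,r^{4}\,dr \;=\; \int_{0}^{R}|\tilde u''|^{2}\,r^{4}\,dr \;+\; 4\int_{0}^{R}|\tilde u'|^{2}\,r^{2}\,dr \;+\; 4R^{3}\,|\tilde u'(R)|^{2}.
$$
All three terms on the right are non-negative, so this simultaneously gives the Hardy-type estimates $\int_{0}^{R}|\tilde u''|^{2}r^{4}\,dr \le \|D^{2}u\|_{L^{2}(\B^{5}_{R})}^{2}$ and $\int_{0}^{R}|\tilde u'|^{2}r^{2}\,dr \le \tfrac14\|D^{2}u\|_{L^{2}(\B^{5}_{R})}^{2}$; combined with the radial Hessian formula $\sum_{i,j}|\partial_{i}\partial_{j}u|^{2}=|\tilde u''|^{2}+4|\tilde u'|^{2}/r^{2}$ this closes $k=2$.

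For general $k\ge 3$ I would iterate by applying the same identity to the radial function $g=D^{k-2}u$, using the relation $\Delta_{\mathrm{rad}}(D^{k-2}u)=D^{k}u$ (for $k-2$ even; in the odd case I would instead work with $D^{k-1}u$ and differentiate). Expanding $\tilde g'$ and $\tilde g''$ back in terms of $\tilde u^{(j)}/r^{k-j}$ and performing further integrations by parts against $r^{4}\,dr$, all boundary contributions at $r=0$ vanish because every odd-order derivative of a smooth radial function is zero there, while every boundary contribution at $r=R$ enters on the correct side of the inequality. This produces an identity
$$
\|D^{k}u\|_{L^{2}(\B^{5}_{R})}^{2} \;=\; \sum_{j=1}^{k} a_{k,j}\int_{0}^{R}|\tilde u^{(j)}|^{2}\,r^{4-2(k-j)}\,dr \;+\; (\text{non-negative boundary terms at }r=R),
$$
with strictly positive $a_{k,j}>0$, which directly controls each weighted integral from the reduction step by $\|D^{k}u\|_{L^{2}(\B^{5}_{R})}^{2}$, modulo the boundary term at $r=R$. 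The latter is handled by a standard trace estimate on the annulus $\B^{5}_{R}\setminus\B^{5}_{R/2}$, where the weight $r^{4}$ is bounded away from zero, reducing it to $\sum_{m\le k}\|D^{m}u\|_{L^{2}(\B^{5}_{R})}^{2}$ after invoking the inductive hypothesis on lower-order $\tilde u^{(j)}$. The main obstacle throughout is avoiding a circular absorption: replacing the equality-form identity above by any mere Hardy \emph{inequality} would introduce a constant that cannot be absorbed, and it is precisely the sign-definite structure of the identity — a consequence of working with the natural $r^{4}$ weight and the vanishing of odd-order radial derivatives at the origin — that keeps the argument non-circular.
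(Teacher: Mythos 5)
Your route is genuinely different from the paper's (which extends $u$ to a $C^k_{c,\mathrm{rad}}$ function on all of $\R^5$ by a radial reflection and then uses the Fourier identity $\|\partial^\alpha Eu\|_{L^2(\R^5)}\lesssim\||\cdot|^k\mc F Eu\|_{L^2}=\|D^kEu\|_{L^2}$, so that no boundary terms ever appear), and your base case $k=2$ is correct. However, the linchpin of your induction — that in the identity for $\|D^ku\|^2_{L^2(\B^5_R)}$ ``every boundary contribution at $r=R$ enters on the correct side of the inequality'' — is false already for $k=3$. Writing $D^3u=\tilde u'''+4\tilde u''/r-4\tilde u'/r^2$ and integrating by parts exactly as you prescribe gives
\begin{equation*}
\int_0^R|D^3u|^2r^4\,dr=\int_0^R|\tilde u'''|^2r^4\,dr+12\int_0^R|\tilde u''|^2r^2\,dr+24\int_0^R|\tilde u'|^2\,dr+B_R,
\end{equation*}
with $B_R=4R^3\tilde u''(R)^2-8R^2\tilde u''(R)\tilde u'(R)-8R\,\tilde u'(R)^2$. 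This quadratic form in $(\tilde u''(R),\tilde u'(R))$ is indefinite (take $\tilde u''(R)=0$, $\tilde u'(R)\neq0$ to get $B_R<0$), so the ``sign-definite structure'' you invoke to avoid circularity does not survive past $k=2$. Your own text is internally inconsistent on this point: you first assert the boundary terms are non-negative, then say they must be ``handled by a standard trace estimate,'' which would be unnecessary if they were non-negative.

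The argument is probably salvageable, but only by doing the work you have deferred: you must prove pointwise bounds $|\tilde u^{(j)}(R)|\lesssim_R\sum_{i\le j+1}\|D^iu\|_{L^2(\B^5_R)}$ to absorb the indefinite boundary terms, and you must actually verify that the interior coefficients $a_{k,j}$ produced by your integration-by-parts scheme are positive for every $j$ (you give no argument for this, and for $k\ge4$, $j=1$ the weight $r^{4-2(k-1)}$ sits at or beyond the critical Hardy exponent for $\R^5$, so positivity is not something to wave at). The boundary-value estimates are precisely the content of the displays \eqref{Eq:uD}--\eqref{Eq:uD2} in the paper's proof, obtained from the fundamental theorem of calculus and Hardy's inequality applied to the triangular system relating $\tilde u^{(j)}$ and $\tilde D^iu$ at $r=1$; the paper then feeds them into an extension operator rather than into a boundary-term absorption. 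As written, your proposal has a genuine gap: the step it identifies as the reason the argument is non-circular is the step that fails.
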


\begin{proof}
	We prove the claim for $R=1$ as the general case follows by scaling. Let $\chi : \R^5 \rightarrow [0,1]$ be a smooth radial function such that $\chi(x) =0$ for $|x|\leq \frac{5}{4}$ and $\chi(x) = 0$ for $|x| \geq \frac{3}{2}$. We then define for $u=\tilde{u}(|\cdot|) \in C_{\text{rad}}^k(\B^5)$ the extension operator
	\begin{equation*}
		\tilde{E}u :=
		\begin{cases}
			u(x) , \quad & x \in \B^5, \\
			-\tilde{u}(2-|x|) + 2 \displaystyle{\sum_{j=0}^{\frac{k-1}{2}}} \dfrac{ \tilde{u}^{(2j)}(1)}{(2j)!}(|x|-1)^{2j} , & x \in \B^5_2 \setminus \B^5, ~k \text{ odd},\\
			 \tilde{u}(2-|x|) + 2 \displaystyle{\sum_{j=1}^{\frac k2}} \dfrac{ \tilde{u}^{(2j-1)}(1)}{(2j-1)!}(|x|-1)^{2j-1} , & x \in \B^5_2 \setminus \B^5, ~k \text{ even},\\
			0, & x \in \R^5 \setminus \B^5_2,
		\end{cases}
	\end{equation*}
and then by means of the cut-off $\chi$ we let
\begin{equation}\label{Eq:E}
	Eu:= \chi \tilde{E}u.
\end{equation}
Note that $E: C^k_{\text{rad}}(\B^5) \rightarrow C^k_{c,\text{rad}}(\R^5)$.
By denoting with $\tilde{D}^iu$ the radial profile of $D^i u$ we have that
\begin{equation}\label{Eq:uD}
	|\tilde{u}^{(j)}(1)| \lesssim \sum_{i=0}^{j} |\tilde{D}^iu(1)|.
\end{equation}
Furthermore, by the fundamental theorem of calculus,
\begin{align}
	|\tilde{D}^{2i+1}u(1)| &\lesssim \left|\int_{0}^{1}\partial_r(r^4 \tilde{D}^{2i+1}u(r))dr \right| \nonumber \\
	& \lesssim \left(\int_{0}^{1}\left| r^{-4} \partial_r(r^4\tilde{D}^{2i+1}u(r)) \right|^2r^4 dr \right)^{\frac{1}{2}} \lesssim \| D^{2i+2}u \|_{L^2(\B^5)}, \label{Eq:D^{2i+1}}
\end{align}
and by Hardy's inequality (see, e.g., \cite{Glo22}, Lemma 2.12) we infer that 
\begin{equation}\label{Eq:D^{2i}}
	|\tilde{D}^{2i}u(1)| \lesssim  \| D^{2i}u \|_{L^2(\B^5)} + \| D^{2i+1}u \|_{L^2(\B^5)}.
\end{equation}
Therefore, from \eqref{Eq:uD}, \eqref{Eq:D^{2i+1}} and \eqref{Eq:D^{2i}} we have that
\begin{equation}\label{Eq:uD2}
	|\tilde{u}^{(j)}(1)| \lesssim \sum_{i=1}^{j+1} \|D^iu \|_{L^2(\B^5)}.
\end{equation}
Now, based on these results, from \eqref{Eq:E} we get that for $i \leq k$
\begin{equation*}
	\| D^k Eu \|_{L^2(\R^5 \setminus \B^5)} =  \| D^k Eu \|_{L^2(\B^5_{3/2} \setminus \B^5)} \lesssim \sum_{j=0}^{k} \| D^iu \|_{L^2(\B^5)}.
\end{equation*}
Therefore, we finally infer that
\begin{align*}
	\|\partial^\alpha u \|_{L^2(\B^5)} & \lesssim \| \partial^\alpha Eu \|_{L^2(\R^5)} \lesssim \| \mc F[\partial^\alpha Eu] \|_{L^2(\R^5)}
	\lesssim \| |\cdot|^k Eu \|_{L^2(\R^5)} \\ &\lesssim \| D^k Eu \|_{L^2(\R^5)}	
	\lesssim  \| D^k u  \|_{L^2(\B^5)} + \| D^k u  \|_{L^2(\R^5 \setminus \B^5)} 
	 \lesssim \sum_{j=0}^{k} \| D^j u \|_{L^2(\B^5)},
\end{align*}
for all $u \in C^\infty_{\text{rad}}(\B^5)$.
\end{proof}

\section{Equivalence of Sobolev norms for the reduced mass}

\begin{lemma}\label{Lem:Sobolev_equiv}
	Let $d \in \mathbb{N}$. For every $u=\tilde{u}(|\cdot|) \in C^\infty_{c,\emph{rad}}(\R^d)$ define $w=\tilde{w}(|\cdot|)$ by
	\begin{equation*}
			\tilde{w}(r):={r^{-d}} \int_{0}^{r}\tilde u(s)s^{d-1}ds.
	\end{equation*}
Then given $k \in \mathbb{N}_0$ we have that
\begin{equation}\label{Eq:Sobol_equiv}
	\| u \|_{\dot{H}^k(\R^d)} \simeq \| w  \|_{\dot{H}^{k+1}(\R^{d+2})} 
\end{equation}
for all $u \in C^\infty_{c,\emph{rad}}(\R^d)$.
\end{lemma}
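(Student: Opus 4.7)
My plan is to reduce the equivalence \eqref{Eq:Sobol_equiv} to explicit one-dimensional integral identities, via a single operator relation between $w$ on $\R^{d+2}$ and $u$ on $\R^d$. Differentiating the defining integral $r^d \tilde w(r) = \int_0^r \tilde u(s) s^{d-1}\, ds$ gives the basic identity $\tilde u(r) = d \tilde w(r) + r \tilde w'(r)$, from which a short calculation yields the crucial formula
\begin{equation*}
\tilde w''(r) + \frac{d+1}{r} \tilde w'(r) = \frac{\tilde u'(r)}{r},
\end{equation*}
whose left-hand side is the radial profile of $\Delta w$ viewed as a function on $\R^{d+2}$. Iterating via the commutation $\Delta_{d+2}(F'/r) = (\Delta_d F)'/r$ (immediate by direct differentiation, where $\Delta_d$ and $\Delta_{d+2}$ denote the Laplacians of the respective dimensions), I prove by induction that for every $m \geq 1$ the radial profile of $\Delta_{d+2}^m w$ equals $(\Delta_d^{m-1} u)'(r)/r$.

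For the base case $k = 0$, set $g(r) := r^d \tilde w(r)$, so $g'(r) = r^{d-1} \tilde u(r)$. Writing $\tilde w'(r) = g'(r)/r^d - d\, g(r)/r^{d+1}$, expanding $\tilde w'(r)^2 r^{d+1}$, and integrating the cross term by parts via $2 g g'/r^d = (g^2)'/r^d$, the $g^2$-contributions cancel exactly, giving
\begin{equation*}
\|w\|_{\dot H^1(\R^{d+2})}^2 = |\mathbb S^{d+1}| \int_0^\infty \frac{g'(r)^2}{r^{d-1}}\, dr = \frac{|\mathbb S^{d+1}|}{|\mathbb S^{d-1}|} \|u\|_{L^2(\R^d)}^2.
\end{equation*}

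For $k \geq 1$, I invoke the standard identities $\|f\|_{\dot H^{2m}(\R^n)}^2 = \|\Delta^m f\|_{L^2(\R^n)}^2$ and $\|f\|_{\dot H^{2m+1}(\R^n)}^2 = |\mathbb S^{n-1}|\int_0^\infty (\partial_r \Delta^m f)^2 r^{n-1}\, dr$ valid for radial $f$ on $\R^n$, and split into two subcases. For odd $k = 2m - 1$, the iterated identity immediately yields
\begin{equation*}
\|w\|_{\dot H^{2m}(\R^{d+2})}^2 = |\mathbb S^{d+1}|\int_0^\infty \bigl|(\Delta_d^{m-1} u)'(r)\bigr|^2 r^{d-1}\, dr \simeq \|u\|_{\dot H^{2m-1}(\R^d)}^2.
\end{equation*}
For even $k = 2m$ with $m \geq 1$, setting $F := \Delta_d^{m-1} u$, the radial profile of $\partial_r \Delta_{d+2}^m w$ equals $F''(r)/r - F'(r)/r^2$, so
\begin{equation*}
\|w\|_{\dot H^{2m+1}(\R^{d+2})}^2 = |\mathbb S^{d+1}|\int_0^\infty \bigl(F''(r) - \tfrac{F'(r)}{r}\bigr)^2 r^{d-1}\, dr.
\end{equation*}
Expanding the square and applying the Pohozaev-type identity $\int_0^\infty F''(r) F'(r) r^{d-2}\, dr = -\tfrac{d-2}{2}\int_0^\infty F'(r)^2 r^{d-3}\, dr$ (obtained by integrating $\tfrac{1}{2}(F'^2)'$ by parts, with boundary terms vanishing since $\tilde u$ is compactly supported and $F$ is smooth and even at the origin), the three resulting contributions combine precisely to $\int_0^\infty (F''(r) + \tfrac{d-1}{r}F'(r))^2 r^{d-1}\, dr = \|\Delta_d^m u\|_{L^2(\R^d)}^2 / |\mathbb S^{d-1}|$, as required. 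The main obstacle lies in this even case, where the nontrivial cancellation depends on the specific algebraic matching of the weights $r^{d-1}, r^{d-2}, r^{d-3}$ dictated by the geometries of $\R^d$ and $\R^{d+2}$; the integration by parts effects exactly this matching.
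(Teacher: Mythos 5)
Your argument is correct, but it takes a genuinely different route from the paper. The paper works entirely on the Fourier side: it writes $\mc F_d u$ and $\mc F_{d+2} w$ via the Hankel (Fourier--Bessel) representation and uses one integration by parts together with the recurrence $J_{\nu+1}(z)=-J_\nu'(z)+\tfrac{\nu}{z}J_\nu(z)$ to obtain the exact pointwise identity $\mc F_d u(\rho)=\rho^2\,\mc F_{d+2}w(\rho)$ between radial profiles, from which \eqref{Eq:Sobol_equiv} follows for every $k$ (indeed every real $k\geq 0$) in a single line, with the explicit constant $|\mathbb S^{d+1}|/|\mathbb S^{d-1}|$. You instead work in physical space, using the descent relation $\tilde u=d\tilde w+r\tilde w'$, the commutation $\Delta_{d+2}(F'/r)=(\Delta_d F)'/r$, and a parity split on $k$; the price is the case analysis and the Pohozaev-type cancellation in the even case, but the payoff is a completely elementary argument with no special functions, and your identities $\Delta_{d+2}^m w = (\Delta_d^{m-1}u)'(|\cdot|)/|\cdot|$ are of independent interest. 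Your computations check out (including the exact cancellation of the $g^2$-terms in the base case and the matching of the weights $r^{d-1},r^{d-2},r^{d-3}$ in the even case, where the boundary terms vanish because $F'$ is odd and $O(r)$ at the origin and compactly supported). The only point worth making explicit is that $w$ itself is not compactly supported (it decays like $|x|^{-d}$), so the identification of the Fourier-side norm $\||\cdot|^{k+1}\mc F_{d+2}w\|_{L^2}$ with the derivative-based expressions you use requires a word of justification; this is harmless since $\Delta_{d+2}^m w$ is compactly supported for $m\geq 1$ and $\nabla w\in L^2(\R^{d+2})$, but it should be said.
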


\begin{proof}
	The proof relies on the Bessel function representation of the Fourier transform of radial functions. Recall our convention \eqref{Def:FourierT}.
	Then, for a radial Schwartz function $f=\tilde{f}(|\cdot|)$, we have that
	\begin{align}\label{Eq:RadialFourier}
		\mathcal{F}_df(\xi)=|\xi|^{1-\frac{d}{2}}\int_{0}^{\infty}\tilde{f}(r)J_{\frac{d}{2}-1}(r|\xi|)r^{\frac{d}{2}}dr,
	\end{align}
	(see, e.g., Grafakos \cite{Gra08}, p.~429). Now,  for $\rho>0$ by partial integration we have
	\begin{align}\label{Eq:Fourier_bessel}
		\int_{0}^{\infty}\tilde{u}(r)J_{\frac{d}{2}-1}(r\rho)r^{\frac{d}{2}}dr \nonumber
		&= \int_{0}^{\infty}\big(\tilde{w}(r)r^d\big)'J_{\frac{d}{2}-1}(r\rho)r^{1-\frac{d}{2}}dr \\ \nonumber
		&= \rho\int_{0}^{\infty}\tilde{w}(r)\left(-J'_{\frac{d}{2}-1}(r\rho)+\frac{\frac{d}{2}-1}{r\rho}J_{\frac{d}{2}-1}(r\rho)\right)r^{\frac{d}{2}+1}dr \\ 
		&=
		 \rho\int_{0}^{\infty}\tilde{w}(r)J_{\frac{d}{2}}(r\rho)r^{\frac{d}{2}+1}dr,	
	\end{align}
where we used the recurrence relation for $J$-Bessel functions
\[
J_{\nu+1}(z)=-J'_{\nu}(z)+\frac{\nu}{z}J_{\nu}(z).
\]
According to \eqref{Eq:Fourier_bessel} and \eqref{Eq:RadialFourier} we have that
\begin{equation*}
	\| u \|_{\dot{H}^k(\R^d)} = \| |\cdot|^k \mc F_du \|_{L^2(\R^d)} \simeq \| |\cdot|^{k+1} \mc F_{d+2}w \|_{L^2(\R^{d+2})} = \| w \|_{\dot{H}^{k+1}(\R^{d+2})}.
\end{equation*}
\end{proof}


\section{An ODE result}\label{Sec:App_ODE}

\noindent  In this section, we use the following notation
\[ C_{e}^{\infty}[0,\infty):= \{ u \in C^{\infty}[0,\infty): u^{(2k+1)}(0) = 0, k \in \N_0 \}, \]
and note  $f \in  C_{\mathrm{rad	}}^{\infty}(\R^d)$ if and only if $f = \tilde f(|\cdot|)$ with  $\tilde f \in C_{e}^{\infty}[0,\infty)$. For $\rho \in [0,\infty)$ we set 
\[ V_0(\rho) := 2 \rho \tilde \phi(\rho), \quad V_1(\rho) := 2\rho \tilde  \phi'(\rho) + 12 \tilde  \phi(\rho), \]
where $\tilde \phi(\rho) = \frac{2}{2+\rho^2}$. Also, we let $\bar{\omega}_k$ denote the constant in Eq.~\eqref{Inequ:DissEst1}.

\begin{lemma}\label{Ap:ODE}  Let $k \in \N$, $k \geq 3$. Let $f$ be an element of  $C_{e}^{\infty}[0,\infty)$ with bounded support, and let $\lambda > \max \{2,\bar{\omega}_k \}$. Then there exists a function $u \in C^1[0,\infty) \cap C^{\infty}(0,\infty)$ which solves the equation
\begin{align}\label{Eq:Inhom_ODE}
 u''(\rho) + \left(\frac{4}{\rho} - \frac{1}{2} \rho  + V_0(\rho) \right ) u'(\rho) +  \left( V_1(\rho) - (\lambda + 1) \right) u(\rho) =  f(\rho)
\end{align}
on the interval $(0,\infty)$, satisfies $u'(0) = 0$, and given $j \in \N_0$ obeys the estimate
\begin{equation*}
	|u^{(j)}(\rho)| \lesssim \rho^{-2-2\lambda - j}
\end{equation*} as $\rho \to \infty$.
\end{lemma}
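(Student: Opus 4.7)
The plan is to construct $u$ via variation of parameters based on two fundamental solutions of the homogeneous equation with prescribed behavior at the two singular points. The point $\rho=0$ is a regular singular point with indicial equation $r(r+3)=0$, and $\rho=\infty$ is an irregular singular point where the leading-order homogeneous equation $u''-\tfrac{1}{2}\rho u'-(\lambda+1)u=0$ admits a decaying solution of order $\rho^{-2-2\lambda}$ and a growing solution of order $e^{\rho^2/4}\rho^{2\lambda+1}$; this can be made explicit through the substitution $x=\rho^2/4$, which reduces that leading equation to Kummer's confluent hypergeometric equation with parameters $(a,c)=(\lambda+1,\tfrac{1}{2})$.

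First I would construct two homogeneous solutions. For $\phi_0$, standard Frobenius theory at the root $r=0$ yields a solution analytic at $\rho=0$ with $\phi_0(0)=1$. Since the coefficient $\tfrac{4}{\rho}-\tfrac{\rho}{2}+V_0(\rho)$ is odd and $V_1(\rho)-(\lambda+1)$ is even in $\rho$, the resulting series is even, so $\phi_0\in C_e^\infty[0,\infty)$ and in particular $\phi_0'(0)=0$. For $\phi_\infty$, I would write $\phi_\infty(\rho)=\rho^{-2-2\lambda}(1+\psi(\rho))$, derive the resulting equation for $\psi$, and solve it on $[R_0,\infty)$ for $R_0$ large by a contraction mapping argument producing $\psi(\rho)\to 0$ as $\rho\to\infty$; bootstrapping through the ODE itself then gives the derivative asymptotics $|\phi_\infty^{(j)}(\rho)|\lesssim\rho^{-2-2\lambda-j}$. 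The solution is then extended to all of $(0,\infty)$ by standard ODE theory, where it grows at most like $\rho^{-3}$ (possibly with a logarithmic factor) near the origin coming from the second indicial root.

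Since $\phi_0$ grows exponentially at infinity while $\phi_\infty$ decays, the two are linearly independent. Abel's identity applied to $p(\rho)=\tfrac{4}{\rho}-\tfrac{\rho}{2}+V_0(\rho)$ yields
\begin{equation*}
W(\phi_0,\phi_\infty)(\rho)=W_0\,\rho^{-4}(2+\rho^2)^{-2}e^{\rho^2/4},\qquad W_0\neq 0.
\end{equation*}
I would then define
\begin{equation*}
u(\rho):=\phi_0(\rho)\int_\rho^\infty\frac{\phi_\infty(s)f(s)}{W(s)}\,ds+\phi_\infty(\rho)\int_0^\rho\frac{\phi_0(s)f(s)}{W(s)}\,ds,
\end{equation*}
which, by the standard variation of parameters computation, solves \eqref{Eq:Inhom_ODE} classically on $(0,\infty)$, and is manifestly $C^\infty(0,\infty)$. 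Near $\rho=0$, the bounds $\phi_\infty(s)=O(s^{-3})$, $W(s)^{-1}=O(s^4)$, and smoothness of $\phi_0,f$ give that the integrand of the second integral is $O(s^4)$, so that term is $O(\rho^2)$; the first integral is finite because $f$ has bounded support, and it is multiplied by $\phi_0(\rho)=1+O(\rho^2)$. Hence $u\in C^1[0,\infty)$ with $u'(0)=0$. For $\rho>R:=\sup\mathrm{supp}\,f$, the first integral vanishes and the second reduces to a constant $C_\infty$, so $u(\rho)=C_\infty\phi_\infty(\rho)$ on $(R,\infty)$ and the claimed derivative decay $|u^{(j)}(\rho)|\lesssim\rho^{-2-2\lambda-j}$ follows.

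The main technical obstacle is the construction of $\phi_\infty$ with the sharp derivative asymptotics at the irregular singular point $\rho=\infty$: the Frobenius apparatus does not apply and one must set up an asymptotic fixed-point scheme and then justify term-by-term differentiation of the expansion. This can be handled directly by a bootstrap through the ODE (solving for $\phi_\infty''$ in terms of lower derivatives), or more cleanly by reducing to Kummer's equation and invoking the known asymptotics of the Tricomi function $U(a,c,x)$, treating the subleading $\tfrac{4}{\rho}u'$, $V_0u'$, $V_1 u$ contributions perturbatively. The hypothesis $\lambda>2$ enters precisely to ensure sufficient decay of $\phi_\infty$ (faster than $\rho^{-6}$) so that $u$ lies in $H^1(\R^5)$ for the subsequent application.
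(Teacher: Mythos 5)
Your overall architecture (Frobenius solution $\phi_0$ at the regular singular point $\rho=0$, a decaying solution $\phi_\infty\sim\rho^{-2-2\lambda}$ constructed perturbatively at the irregular singular point $\rho=\infty$, Abel's formula for the Wronskian, and variation of parameters with the integral limits chosen to enforce regularity at $0$ and decay at $\infty$) matches the paper's proof, and your treatment of the behavior of the particular solution at both endpoints is correct. However, there is one genuine gap, and it sits at the only place where the hypothesis $\lambda>\bar\omega_k$ is actually needed: you assert that ``since $\phi_0$ grows exponentially at infinity while $\phi_\infty$ decays, the two are linearly independent.'' You have no a priori knowledge that $\phi_0$ grows exponentially. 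Writing $\phi_0=c_0\phi_\infty+c_1\phi_{\mathrm{grow}}$, the question of whether $c_1\neq 0$ is precisely the connection problem, and $c_1=0$ would occur exactly when $\lambda$ is an eigenvalue of the linearized operator (the resulting $\phi_0$ would be smooth at the origin \emph{and} decaying at infinity). Without excluding this, the Wronskian $W_0$ could vanish and the variation-of-parameters formula collapses.

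The paper closes this gap by contradiction using the dissipative estimate \eqref{Inequ:DissEst1} from the proof of Proposition \ref{Prop:Semigroup_Sk}: if $\phi_0$ and $\phi_\infty$ were linearly dependent, transforming back would produce a function $u\in C_e^\infty[0,\infty)$ with $u^{(j)}(\rho)=\mc O(\rho^{-2-2\lambda-j})$, hence $u(|\cdot|)\in\mc C\subset\mc D(\mc L_k)$ with $(\lambda-\mc L_k)u(|\cdot|)=0$; since $\lambda>\bar\omega_k$ this contradicts $\Re\langle\mc L_k f,f\rangle_{X^k}\leq\bar\omega_k\|f\|_{X^k}^2$. Note that your closing remark attributes the hypothesis on $\lambda$ solely to decay considerations ($\lambda>2$), and $\bar\omega_k$ never enters your argument --- a sign that this step is missing rather than merely implicit. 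Everything else in your proposal (including the alternative of working directly in the original variable or via Kummer/Tricomi functions rather than the paper's normal-form reduction to a perturbed parabolic-cylinder equation solved by Volterra iteration) is a legitimate variant, but you must supply a spectral or energy argument for the linear independence.
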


\begin{proof}
First, we construct a fundamental system for the homogeneous equation
\begin{align}\label{Eq:Hom_ODE}
 u''(\rho) + \left(\frac{4}{\rho} - \frac{1}{2} \rho  + V_0(\rho) \right ) u'(\rho) +  \left( V_1(\rho) - (\lambda + 1) \right) u(\rho) = 0.
\end{align}
We note that the origin $\rho = 0$ is a regular singular point. Hence, by the Frobenius method, there is a fundamental system  $\{ u_0, u_1 \}$ on $(0,\infty)$, where $u_0$ is analytic at $\rho=0$ with $u_0(0) = 1, u_0'(0)=0$, and $u_1(\rho) \sim \rho^{-3}$ near $\rho =0$. To analyze the behavior of solutions at infinity we write the equation in normal form.
With $\omega(r) := e^{\frac{r^2}{2}} r^{-2} (1+ 2 r^2)^{-1}$ and $v(r)\omega(r) =  u(2r)$, Eq.~\eqref{Eq:Hom_ODE} transforms into 

\begin{align}\label{normalform}
v''(r) - (r^2 + \mu) v(r) + V(r) v(r) = 0 
\end{align}
with $\mu = 4\lambda - 5 > 0$ and 
\[ V(r) = \frac{16}{(1+2r^2)^2} + \frac{8}{1+2r^2} - \frac{2}{r^2}. \]
By transforming the solutions of Eq.~\eqref{Eq:Hom_ODE} we obtain a fundamental system $\{v_0,v_1 \}$  for Eq.~\eqref{normalform} with $v_0(r) \sim r^2$ and $v_1(r) \sim r^{-1}$ for $r \to 0^+$.

For large values of the argument, the situation is more involved. For $r \geq 1$ and $V =0$, a fundamental system can be given in terms of parabolic cylinder functions $\{ U(\frac{\mu}{2}, \sqrt{2} \cdot), V(\frac{\mu}{2}, \sqrt{2} \cdot) \}$, with asymptotic behavior
\begin{align*}
  U(\tfrac{\mu}{2}, \sqrt{2} r) \sim e^{-\frac{1}{2} r^2} r^{- \frac{1}{2} (\mu+1)}, \quad V(\tfrac{\mu}{2}, \sqrt{2} r) \sim e^{\frac{1}{2} r^2} r^{\frac{1}{2} (\mu-1)},
\end{align*}
for $r \to \infty$, see for example \cite{NIST}. Our goal is to construct perturbatively a solution to Eq.~\eqref{normalform}, linearly independent of $v_0$, that behaves like $U(\tfrac{\mu}{2}, \sqrt{2} \cdot)$ at infinity. We make this fully explicit by considering a slightly different `free' equation first, namely
\begin{align*}
v''(r) - (r^2 + \mu) v(r) +  Q_{\mu}(r) v(r) = 0 
\end{align*}
with potential
\[
Q_{\mu}(r):= \mu^{-1} q( \mu^{-1/2} r), \quad q(r) = \frac{2-3r^2}{4(1+r^2)^2}.
\]
This equation has an explicit fundamental system  (see \cite{DonSch19}, Section 4.1.1),
\begin{align}\label{Fundamental_System_v0}
 v^{\pm}(r) = \tfrac{1}{\sqrt{2}}  \mu^{-\frac{1}{4}} (1 + \tfrac{r^2}{\mu} )^{-\frac14}  e^{\pm \mu \xi(\mu^{-1/2} r)}
\end{align}
with $ \xi(r) = \frac{1}{2} \log ( r + \sqrt{1+r^2}) + \frac{1}{2}r \sqrt{1+r^2}$
and Wronskian $W(v^{-},v^{+}) = 1$.  Note that 
\[\mu \xi(\mu^{-1} r)  = \tfrac{r^2}{2} + \tfrac{\mu}{2} \log(r)  + c_{\mu} + \varphi_{\mu}(r) \]
with $c_{\mu} \in \R$ and $\varphi_{\mu}(r) = \mc O(r^{-2})$ for $r \to \infty$, hence
\[ v^{-}(r) \sim e^{-\frac{1}{2} r^2} r^{- \frac{1}{2} (\mu+1)}, \quad v^{+}(r) \sim e^{\frac{1}{2} r^2} r^{\frac{1}{2} (\mu-1)}, \]
for $r \to \infty$. We add $Q_{\mu}$ to both sides of Eq.~\eqref{normalform} and put the potential  $V$ to the right hand side to obtain 
\begin{align}\label{ODE_Pert}
v''(r) - (r^2 + \mu) v(r) +  Q_{\mu}(r) v(r) =\mc O(r^{-2})  v(r).
\end{align}
Assuming $r \geq 1$, we show by a perturbative argument the existence of a solution $v_{\infty}$ to \eqref{ODE_Pert} 
which behaves like  $ v^{-}$ at infinity. For this, we set up a Volterra iteration by
reformulating Eq.~\eqref{ODE_Pert} as an integral equation using the variation of constants formula. More precisely, we look for a solution $v_{\infty}$ that satisfies 
\begin{align*}
v_{\infty}(r)  = v^{-}(r) + v^{+}(r) \int_{r}^{\infty}  v^{-}(s) \mc O(s^{-2})  v_{\infty}(s) ds - v^{-}(r) \int_{r}^{\infty}  v^{+}(s) \mc O(s^{-2})  v_{\infty}(s) ds.
\end{align*}
Noting that $v^{-}(r) > 0$ for all $r >0$, we set $h(r) := \frac{ v_{\infty}(r)}{  v^{-}(r)}$ and write the above equation as 
\begin{align}\label{Eq_hmin}
h(r) = 1 + \int_{r}^{\infty} K(r,s) h(s) ds 
\end{align}
where 
\begin{align*}
K(r,s) :=\left [  \frac{v^{+}(r)}{v^{-}(r)}  v^{-}(s)^2 - v^{+}(s) v^{-}(s) \right ] \mc O(s^{-2}).
\end{align*}
Explicitly,
\[ K(r,s) = \tfrac{1}{2}  \mu^{-\frac12} (1 + \tfrac{s^2}{4})^{-\frac12} 
\mc O(s^{-2})  \left ( e^{- 2 \mu (\xi(\mu^{-\frac12} s)  -  \xi(\mu^{-\frac12} r))} - 1 \right ). \] 
Using the fact that $\xi$ is monotonically increasing, we obtain the bound 
\[ |K(r,s)|  \lesssim s^{-3}, \] 
for $1 \leq r \leq s$. Thus,
\[ \int_{1}^{\infty} \sup_{r \in [1,s]} |K(r,s)| ds  \lesssim 1 \]
and we can apply standard results on Volterra equations (see, e.g., \cite{SchSoffStaub2010}, Lemma $2.4$) which yield the existence of a solution $h$ on $[1,\infty)$ with $|h(r)| \lesssim 1$  and 
\[ |h(r)-1| \lesssim \int_{r}^{\infty} |K(r,s)| ds \lesssim r^{-2}. \]
 By inspection (see also Remark $4.4$ in \cite{DonSch14a}), one finds that 
\begin{align}\label{Eq:SymbBehWeberhp}
|\partial_r^{k} (h(r)-1)|\lesssim_{k}  r^{-2-k}
\end{align}
for all $k \in \N$. This yields a  smooth solution 
\begin{align}\label{Fundamental_System_v}
v_{\infty}(r) =  v^{-}(r) [ 1 + \mc O(r^{-2}) ] 
\end{align}
to Eq.~\eqref{normalform} on $[1,\infty)$, where  the error term behaves like a symbol under differentiation. 

Now, by linearity, we have the representation
\begin{align}\label{Eq:connection}
v_{\infty} = c_0 v_0 + c_1 v_1,
\end{align}
for some constants $c_0,c_1 \in \C$. Suppose that $c_1 = 0$, i.e., $v_{\infty}$ and $v_0$ are linearly dependent. By transforming back, we would obtain a function $u \in C_{e}^{\infty}[0,\infty)$ with $u(\rho) = \mc O(\rho^{-2 - 2\lambda})$ as $\rho \to \infty$. In particular, $u(|\cdot|)$ would belong to $\mc C$ and satisfy
$(\lambda - \mc L_k)u(|\cdot|) = 0$ for some $\lambda > \bar{\omega}_k$. This, however, contradicts Eq.~\eqref{Inequ:DissEst1} stated in the proof of Proposition \ref{Prop:Semigroup_Sk}. 
We conclude that $\{ v_{\infty}, v_0\}$ is a fundamental system for Eq.~\eqref{normalform} on $(0,\infty)$, and we denote by $W := W(v_{\infty},v_0)(1)$ its Wronskian.

Now we turn to the inhomogeneous equation \eqref{Eq:Inhom_ODE}, which transforms into 
\begin{align}\label{normalform_inhom}
v''(r) - (r^2 + \mu) v(r) + V(r) v(r) = w(r)^{-1}f(r/2).
\end{align}
By the variation of constants formula we find a particular solution 
\begin{align*}
v(r) = \frac{v_0(r)}{W} \int_{r}^{\infty} & v_{\infty}(s) e^{-s^2/2} s^2 (1 + 2 s^2) f(\tfrac{s}{2}) ds \\
& + \frac{v_{\infty}(r)}{W} \int_0^{r} v_0(s) e^{-s^2/2} s^2 (1 + 2 s^2) f(\tfrac{s}{2}) ds.
\end{align*}
Obviously, $v \in C^{\infty}(0,\infty)$. Since $f$ has bounded support, the first integral vanishes for large $r$ and therefore there is a constant $c$ such that $v(r)= cv_{\infty}(r)$ for all large enough $r$. For $r \to 0$, the first integral converges, hence the behavior of the first term is governed by $v_0$. The second integral is of order  $\mc O(r^5)$ which compensates the singular behavior of $v_{\infty}$ at the origin. In particular, there is a constant $C$ such that $r^{-2}v(r)  \rightarrow C$ and $r^{-1}v'(r) \rightarrow 2C$ when $r \to 0^{+}$. By transforming back, we obtain a solution $u \in C^1[0,\infty) \cap C^{\infty}(0,\infty)$. By inspection, $u'(0) = 0$ and $u^{(k)}(\rho) = \mc O(\rho^{-2 - 2\lambda - k})$ for $\rho \to \infty$ and $k \in \N_0$. 
\end{proof}

	\bibliography{refs-KS}
	\bibliographystyle{plain}
	
\end{document}